\newcommand{\tle}{
The maximal mean equicontinuous factor via regional mean sensitivity
}
\newtheorem{theorem}{Theorem}[section]
\newtheorem*{theorem*}{Theorem}
\newtheorem{lemma}[theorem]{Lemma}
\newtheorem{proposition}[theorem]{Proposition}
\newtheorem{corollary}[theorem]{Corollary}
\newtheorem*{corollary*}{Corollary}
\theoremstyle{definition}
\newtheorem{definition}[theorem]{Definition}
\newtheorem{example}[theorem]{Example}
\newtheorem*{example*}{Example}
\theoremstyle{remark}
\newtheorem{remark}[theorem]{Remark}
\numberwithin{equation}{section}
\newcommand{\dd}[1]{ \, \mathrm{d}#1}
\newcommand{\supp}{\operatorname{supp}}
\newcommand{\haar}[1]{\left|#1\right|}
\newcommand{\seq}[1]{\vec{\textbf{#1}}}
\newcommand{\icerhull}[1]{\langle #1 \rangle}
\newcommand{\upperAdens}{\operatorname{ua-dens}}
\newcommand{\upperBdens}{\operatorname{ub-dens}}
\def\P{\mathrm{P}}
\def\RP{\mathrm{Q}_\mathrm{rp}}
\def\RME{\mathrm{R}_\mathrm{me}}
\def\RMS{\mathrm{Q}_\mathrm{rms}}
\def\SM{\mathrm{S}_\mathrm{rjms}}
\def\WSM{\mathrm{S}_\mathrm{wsm}}
\newcommand{\upperElement}[1]{{#1}^{\blacktriangle}}
\newcommand{\lowerElement}[1]{{#1}^{\blacktriangledown}}
\begin{document}
\setcounter{page}{1}

\color{black}{
\noindent 
}

%------------------------------------------------------------------------------

%Title of the paper
\title[\tle]{\tle}

%Author names and affiliations
\author[]{Till Hauser}
\address[T. Hauser]{Facultad de Matem\'aticas, Pontificia Universidad Cat\'olica de Chile. Edificio Rolando Chuaqui, Campus San Joaquín. Avda. Vicuña Mackenna 4860, Macul, Chile.}
\email{hauser.math@mail.de}
\author[]{Chunlin Liu }
\address[C. Liu]{School of Mathematical Sciences, Dalian University of Technology, Dalian, 116024, P.R. China
and 
Institute of Mathematics, Polish Academy of Sciences, ul. Śniadeckich 8, 00-656 Warszawa, Poland
}
\email{chunlinliu@mail.ustc.edu.cn}
\thanks{This article was funded by the Deutsche Forschungsgemeinschaft (DFG, German Research Foundation) – 530703788. 
The second author was supported by the
Postdoctoral Fellowship Program and China Postdoctoral Science Foundation under Grant Number BX20250067, and the China Postdoctoral Science Foundation under Grant Number 2025M773074.
We are grateful to Mar\'{\i}a Isabel Cortez and Wen Huang for valuable suggestions on the presentation of the article. 
}
\begin{abstract}
For actions of amenable groups, mean equicontinuity—a natural relaxation of equicontinuity obtained by averaging metrics along orbits—is well known to yield a maximal mean equicontinuous factor.  
In 2021, Li and Yu introduced the notion of weak sensitivity in the mean for actions of $\mathbb{Z}$ to gain a deeper understanding of this phenomenon, building on earlier work by Qiu and Zhao.  

We demonstrate that this relation is insufficient for actions of non-Abelian groups. To overcome this limitation, we introduce the regional mean sensitive relation, which more precisely captures the dynamical behaviour underlying the maximal mean equicontinuous factor. We discuss its fundamental properties and highlight its advantages in the non-Abelian setting.  
In particular, we show that mean equicontinuity is equivalent to the nonexistence of non-diagonal regional mean sensitive pairs.  
For this, we work in the context of actions of $\sigma$-compact and locally compact amenable groups.

\noindent \textit{Keywords.} Mean equicontinuity, maximal mean equicontinuous factor, regionally mean sensitive pairs, amenable group actions, maximal support.
\newline
\noindent \textit{2020 Mathematics Subject Classification.} 
Primary 
37B05;  %Dynamical systems involving transformations and group actions with special properties (minimality, distality, proximality, expansivity, etc.)
Secondary  
37A15, % General groups of measure-preserving transformations and dynamical systems
37B25. % Stability of topological dynamical systems
%37A05. % Dynamical aspects of measure-preserving transformations
\end{abstract} 
\maketitle

%%%%%%%%%%%%%%%%%%
% TODOS

% Check the remark on the right Folner sequences - Tempelman book in Section 2. Remark 2.2

% Read everything 

% Ask Wen and Maria Isabel for the acknowledgements

%%%%%%%%%%%%%%%%%%%%%%%%%%%%%%

\section{Introduction}
The classical notion of equicontinuity plays a central role in topological dynamics, ensuring the existence of a maximal equicontinuous factor for any group action. 
In the context of actions of amenable groups, a natural relaxation of equicontinuity is given by the notion of $(\mathcal{F}-)$mean equicontinuity. It is motivated by the study of discrete spectrum \cite{fomin1951dynamical}, 
the interplay of topological dynamics and ergodic theory \cite{fuhrmann2022structure}, as well as the study of various prominent examples, such as regular Toeplitz actions,  Sturmian subshifts and Auslander systems. For details see \cite{downarowicz2005survey, cortez2014invariant,fuhrmann2022structure} and the references within. 

To describe this concept in more detail, let $G$ be a $\sigma$-compact and locally compact amenable group and denote $\haar{\cdot}$ for a choice of a Haar measure on $G$.
A sequence $\mathcal{F}=(F_n)_{n\in \mathbb{N}}$ of compact subsets of $G$ with $\haar{F_n}>0$ is called \emph{(left) F{\o}lner} if for each non-empty and compact subset $K\subseteq G$ we have 
$\haar{KF_n\Delta F_n}/\haar{F_n}\to 0$, where $\Delta$ denotes the symmetric difference.
For a F{\o}lner sequence $\mathcal{F}=(F_n)_{n\in \mathbb{N}}$ and a continuous metric $d$ on $X$ we define the \emph{(Besicovitch) $\mathcal{F}$-mean pseudometric} by 
\[D_\mathcal{F}(x,x'):=\limsup_{n\to \infty}\frac{1}{\haar{F_n}}\int_{F_n}d(g.x,g.x') \dd g.\]
An action is called \emph{$\mathcal{F}$-mean equicontinuous} if $D_\mathcal{F}\in C(X^2)$. It is well-known that this notion is independent of the choice of a continuous metric on $X$ \cite{fuhrmann2022structure}. 
Whenever $G$ is Abelian (or whenever $(X,G)$ is minimal), then this notion does not depend on the choice of a F{\o}lner sequence $\mathcal{F}$ \cite{fuhrmann2022structure}. 
However, there exist transitive actions of countable amenable groups where this notion depends on the choice of a F{\o}lner sequence $\mathcal{F}$ \cite{fuhrmann2025continuity}. 
We thus keep the choice of a F{\o}lner sequence explicit. 

Clearly, the trivial action on one point is $\mathcal{F}$-mean equicontinuous and the restriction of a $\mathcal{F}$-mean equicontinuous action to a non-empty closed invariant subset is also $\mathcal{F}$-mean equicontinuous. Furthermore, whenever $(X_{n},G)_{n\in \mathbb{N}}$ is a countable family of $\mathcal{F}$-mean equicontinuous actions, then it is easy to verify that the product action 
$(\prod_{n=1}^\infty X_{n},G)$
is also $\mathcal{F}$-mean equicontinuous (see e.g. \cite{li2015mean} for $\mathbb{Z}$-actions). 

It thus follows from a well-known argument \cite{auslander1988minimal, hauser2025mean} that for any action $(X,G)$ there exists a maximal $\mathcal{F}$-mean equicontinuous factor $(X_{\text{$\mathcal{F}$-me}},G)$, which is uniquely determined up to conjugacy. 
We denote $\pi_{\text{$\mathcal{F}$-me}}\colon X\to X_{\text{$\mathcal{F}$-me}}$ for the respective factor map and 
\[\RME^\mathcal{F}(X):=\{(x,x')\in X^2;\, \pi_{\text{$\mathcal{F}$-me}}(x)=\pi_{\text{$\mathcal{F}$-me}}(x')\}\]
for the \emph{$\mathcal{F}$-mean equicontinuous structure relation}. 

Obviously, such an existential proof cannot tell us much about the maximal $\mathcal{F}$-mean equicontinuous factor and the $\mathcal{F}$-mean equicontinuous structure relation $\RME^\mathcal{F}(X)$ and it is natural to ask whether there exists a relation $Q$ on $X$ that allows for a better understanding, similar to the regionally proximal relation in the context of the maximal equicontinuous factor \cite[Chapter 9]{auslander1988minimal}. 

For actions of $\mathbb{Z}$ this question was resolved in \cite{QiuZhao2020}. 
Building\footnote{
For more details see the remark following \cite[Theorem~5.11]{li2021mean}.
} 
on their work in \cite{li2021mean} the respective ideas were further developed and led to the notion of \emph{($\mathcal{F}$-)weak sensitivity in the mean} in the context of actions of $\mathbb{Z}$ and the standard F{\o}lner sequence $\mathcal{F}=(F_n)_{n\in \mathbb{N}}$ given by $F_n:=\{0,\dots, n-1\}$.
To give the definition of the concept of weak sensitivity in the mean for a subset $U\subseteq X^2$ and $(y,y')\in X^2$ we denote 
\[G_U(y,y'):=\{g\in G;\, (g.y,g.y')\in U\}\]
for the set of \emph{hitting times of $(y,y')$ to $U$} and consider a F\o lner sequence $\mathcal{F}=(F_n)_{n\in \mathbb{N}}$ in $G$. 

\begin{definition}\cite[Definition 5.4]{li2021mean}
    A pair $(x,x')\in X^2$ with $x\neq x'$ is called \emph{$\mathcal{F}$-weakly sensitive in the mean} 
    if for any open neighbourhood $U$ of $(x,x')$ there exists $c>0$ such that for all $\epsilon>0$ there are $n\in \mathbb{N}$ and $y,y'\in X$ with $d(y,y')<\epsilon$ 
    satisfying
    \begin{align*}
        \frac{\haar{G_U(y,y')\cap F_n}}{\haar{F_n}}>c. 
    \end{align*}
    We denote $\WSM^\mathcal{F}(X)$ for the set of all $\mathcal{F}$-weakly sensitive in the mean pairs. 
\end{definition}

We denote $\icerhull{Q}$ for the smallest closed invariant equivalence relation that contains a subset $Q\subseteq X^2$.
Building on the work of \cite{QiuZhao2020} it was shown in \cite[Corollary 5.6]{li2021mean} and \cite[Theorem 5.11]{li2021mean} that for the standard F\o lner sequence $\mathcal{F}$ we have that $(X,\mathbb{Z})$ is $\mathcal{F}$-mean equicontinuous if and only if $\WSM^\mathcal{F}(X)\subseteq \Delta_X$, and that $\RME^\mathcal{F}(X)=\icerhull{\WSM^\mathcal{F}(X)}$. 

It is natural to ask whether these results also hold in the context of actions of more general groups. 
We will see in Section \ref{sec:mainTheorem} that this is the case, whenever $G$ is Abelian. 
However, in Section \ref{sec:examples} below we will revisit an example from \cite{fuhrmann2025continuity} to show that beyond the context of actions of Abelian groups $\mathcal{F}$-weak sensitivity in the mean fails to describe $\RME^\mathcal{F}(X)$. We will present the following in Example~\ref{exa:lamplighterExample} below. 

\begin{example*}
    There exists a transitive action $(X,G)$ of a countable amenable group and a F{\o}lner sequence $\mathcal{F}$ in $G$ such that $(X,G)$ is $\mathcal{F}$-mean equicontinuous and admits a pair $(x,x')\in X^2$ with $x\neq x'$ that is $\mathcal{F}$-weakly sensitive in the mean. 
\end{example*}

In order to find an appropriate relation $Q$ that works in general let us reconsider the regionally proximal relation $\RP(X)$ for an action $(X,G)$. 
A pair $(x,x')\in X^2$ is called \emph{regionally proximal} if for any $\epsilon>0$ and any open neighbourhood $U$ of $(x,x')$ there exist $(y,y')\in U$ and $g\in G$ such that $d(g.y,g.y')<\epsilon$. 
To give an appropriate reformulation we call a sequence $(x_n,x_n')_{n\in \mathbb{N}}$ \emph{asymptotically diagonal}, whenever $d(x_n,x_n')\to 0$. 
A moment's thought reveals that a pair $(x,x')$ is regionally proximal if and only if for any open neighbourhood $U$ of $(x,x')$ there exists an asymptotically diagonal sequence $(x_n,x_n')_{n\in \mathbb{N}}$ such that 
\[\limsup_{n\to\infty}
    \haar{G_U(x_n,x_n')}>0.\]
Motivated by this observation for a (Borel-)measurable subset $G'\subseteq G$ and a F{\o}lner sequence $\mathcal{F}=(F_k)_{k\in \mathbb{N}}$ we consider the \emph{upper asymptotic density of $G'$ along $\mathcal{F}$} given by 
\[\upperAdens_\mathcal{F}(G'):=\limsup_{k\to \infty}\frac{\haar{G'\cap F_k}}{\haar{F_k}}\]
and define the following.  

\begin{definition}[$\mathcal{F}$-regionally mean sensitive pairs]
    Let $(X,G)$ be an action and $\mathcal{F}=(F_n)_{n\in \mathbb{N}}$ be a F{\o}lner sequence in $G$. 
    A pair $(x,x')\in X^2$ is called
    \emph{$\mathcal{F}$-regionally mean sensitive} if for any open neighbourhood $U$ of $(x,x')$ in $X^2$ 
    there exists an asymptotically diagonal sequence $(x_n,x_n')_{n\in \mathbb{N}}$ that satisfies
    \[
    \limsup_{n\to\infty}
    \upperAdens_\mathcal{F}(G_U(x_n,x_n'))
    >0.\]
    We denote $\RMS^\mathcal{F}(X)$ for the set of all $\mathcal{F}$-regionally mean sensitive pairs. 
\end{definition}

In Proposition \ref{pro:meanEquicontinuityViaRMS} we will see that an action $(X,G)$ is $\mathcal{F}$-mean equicontinuous if and only if $\RMS^\mathcal{F}(X)\subseteq \Delta_X$. 
Furthermore, we will see in Theorem \ref{the:maximalMeanEquicontinuousFactor} that the maximal $\mathcal{F}$-mean equicontinuous factor
can be described via the concept of $\mathcal{F}$-regional mean sensitivity, i.e.\ the following. 

\begin{theorem*}
\label{the:introRME_F}
    Let $(X,G)$ be an action of a locally compact and $\sigma$-compact amenable group on a compact metric space and $\mathcal{F}$ be a F{\o}lner sequence in $G$. We have $\RME^\mathcal{F}(X)=\icerhull{\RMS^\mathcal{F}(X)}$. 
\end{theorem*}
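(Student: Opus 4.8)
The plan is to establish the two inclusions separately, using Proposition~\ref{pro:meanEquicontinuityViaRMS} as the bridge between $\mathcal{F}$-regional mean sensitivity and $\mathcal{F}$-mean equicontinuity.

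\emph{The inclusion $\icerhull{\RMS^\mathcal{F}(X)}\subseteq\RME^\mathcal{F}(X)$.} Since $\RME^\mathcal{F}(X)$ is a closed invariant equivalence relation, it suffices to check $\RMS^\mathcal{F}(X)\subseteq\RME^\mathcal{F}(X)$. The key observation is that $\mathcal{F}$-regional mean sensitivity pushes forward along any factor map $\phi\colon X\to Z$: if $(x,x')\in\RMS^\mathcal{F}(X)$ has $\phi(x)\neq\phi(x')$, then $(\phi(x),\phi(x'))\in\RMS^\mathcal{F}(Z)$. Indeed, for an open neighbourhood $V$ of $(\phi(x),\phi(x'))$ one sets $U:=(\phi\times\phi)^{-1}(V)$ and applies the definition at $(x,x')$ to obtain an asymptotically diagonal sequence $(x_n,x_n')_{n}$ with $\limsup_n\upperAdens_\mathcal{F}(G_U(x_n,x_n'))>0$; equivariance gives $G_U(x_n,x_n')=G_V(\phi(x_n),\phi(x_n'))$, while uniform continuity of $\phi$ ensures that $(\phi(x_n),\phi(x_n'))_n$ is again asymptotically diagonal, so $(\phi(x),\phi(x'))$ is $\mathcal{F}$-regionally mean sensitive. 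Applying this with $\phi=\pi_{\text{$\mathcal{F}$-me}}$ and recalling that $X_{\text{$\mathcal{F}$-me}}$ is $\mathcal{F}$-mean equicontinuous, Proposition~\ref{pro:meanEquicontinuityViaRMS} forces $\RMS^\mathcal{F}(X_{\text{$\mathcal{F}$-me}})\subseteq\Delta_{X_{\text{$\mathcal{F}$-me}}}$; hence any $(x,x')\in\RMS^\mathcal{F}(X)$ must satisfy $\pi_{\text{$\mathcal{F}$-me}}(x)=\pi_{\text{$\mathcal{F}$-me}}(x')$, that is $(x,x')\in\RME^\mathcal{F}(X)$.

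\emph{The inclusion $\RME^\mathcal{F}(X)\subseteq\icerhull{\RMS^\mathcal{F}(X)}$.} Write $R:=\icerhull{\RMS^\mathcal{F}(X)}$, form the factor $q\colon X\to Y:=X/R$, and note that it suffices to prove that $(Y,G)$ is $\mathcal{F}$-mean equicontinuous: then $Y$ is an $\mathcal{F}$-mean equicontinuous factor of $X$, so by maximality of $X_{\text{$\mathcal{F}$-me}}$ the map $q$ factors through $\pi_{\text{$\mathcal{F}$-me}}$, whence $\RME^\mathcal{F}(X)=\ker\pi_{\text{$\mathcal{F}$-me}}\subseteq\ker q=R$. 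By Proposition~\ref{pro:meanEquicontinuityViaRMS} the mean equicontinuity of $Y$ is equivalent to $\RMS^\mathcal{F}(Y)\subseteq\Delta_Y$, so I would argue by contradiction: given $(\bar x,\bar x')\in\RMS^\mathcal{F}(Y)$ with $\bar x\neq\bar x'$, the goal is to produce a pair $(x,x')\in\RMS^\mathcal{F}(X)$ with $q(x)\neq q(x')$. Any such pair lies in $\RMS^\mathcal{F}(X)\subseteq R=\ker q$, forcing $q(x)=q(x')$, a contradiction that rules out nondiagonal $\mathcal{F}$-regionally mean sensitive pairs in $Y$.

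\emph{The main obstacle.} Everything thus reduces to \emph{lifting} an $\mathcal{F}$-regionally mean sensitive pair through the factor map $q$, and this is the genuinely hard point. The naive attempt --- take the asymptotically diagonal witnesses $(\bar u_n,\bar u_n')$ in $Y$ and lift them to $X$ --- fails: although $G_{(q\times q)^{-1}(\bar U)}(u_n,u_n')=G_{\bar U}(\bar u_n,\bar u_n')$ for \emph{any} choice of lifts, so that the hitting densities are preserved, the condition $d_Y(\bar u_n,\bar u_n')\to 0$ does \emph{not} force any lift to satisfy $d_X(u_n,u_n')\to 0$; fibres of $q$ are only upper semicontinuous, and two $Y$-close points can have fibres that remain at positive distance in $X$. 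Moreover, the upper asymptotic density along $\mathcal{F}$ is a $\limsup$ and is not continuous under limits of pairs, so one cannot simply pass to a convergent subsequence of lifts and retain positive density. The plan to overcome this is to work over the compact fibre product $q^{-1}(\bar x)\times q^{-1}(\bar x')$ and to exploit the \emph{dynamics} rather than the static factor map: from the positive-density recurrence of the lifted orbits to neighbourhoods of this fibre product, together with the closedness and invariance of $\RMS^\mathcal{F}(X)$, one extracts an honest asymptotically diagonal witnessing structure for a pair in the fibre product. It is precisely here that the regional (density) formulation --- rather than $\mathcal{F}$-weak sensitivity in the mean --- and the full strength of the $\sigma$-compact, locally compact, amenable hypotheses, through F{\o}lner averaging and the resulting invariance of upper density, are needed, and I expect this extraction to be the technical heart of the argument.
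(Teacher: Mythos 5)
Your first inclusion is correct and is essentially the paper's own argument: push $\mathcal{F}$-regionally mean sensitive pairs forward along $\pi_{\text{$\mathcal{F}$-me}}$ (Lemma~\ref{lem:factorStabilityI}) and invoke Proposition~\ref{pro:meanEquicontinuityViaRMS} on the $\mathcal{F}$-mean equicontinuous factor. The problem is the second inclusion. Your entire strategy there rests on lifting a non-diagonal pair of $\RMS^\mathcal{F}(Y)$, $Y=X/\icerhull{\RMS^\mathcal{F}(X)}$, to a pair of $\RMS^\mathcal{F}(X)$ outside the fibre relation of $q$. You diagnose accurately why the naive lift fails (asymptotic diagonality in $Y$ does not lift, fibres can stay uniformly separated, and the $\limsup$ densities do not pass to limits of lifted pairs), but the proposed fix --- ``work over the fibre product and extract an honest asymptotically diagonal witnessing structure'' --- is never carried out and is not a routine step. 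As written, the second inclusion is unproved; this is a genuine gap, not a detail.

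The paper sidesteps the lifting problem entirely. To show that $Y$ is $\mathcal{F}$-mean equicontinuous it does not try to control $\RMS^\mathcal{F}(Y)$; instead (Lemma~\ref{lem:factorStabilityII}) it argues directly with the pseudometric: if $Y$ fails to be $\mathcal{F}$-mean equicontinuous, there are $y_n\to y$ with $D_\mathcal{F}(y_n,y)\geq\delta>0$. One lifts the \emph{points} $y_n$ to $x_n\to x$ (unproblematic, unlike lifting sensitive pairs), chooses subsequences $\mathcal{F}^{(n)}$ of $\mathcal{F}$ realising $D_\mathcal{F}(y_n,y)$, and forms invariant measures $\mu^{(n)}\in\mathcal{M}_{\mathcal{F}}(x_n,x)$ on $X^2$ with a weak* limit $\mu$. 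Lemma~\ref{lem:supportControlForFactorStability}, which relies on the measure-theoretic characterisation of $\RMS^\mathcal{F}$ in Proposition~\ref{pro:characterisationViaMeasures}, gives $\supp(\mu)\subseteq\RMS^\mathcal{F}(X)\subseteq R(q)$, so the pushforward $\nu$ of $\mu$ to $Y^2$ is supported on $\Delta_Y$ and $\nu(d)=0$, contradicting $\nu(d)=\lim_n D_\mathcal{F}(y_n,y)=\delta>0$. The missing idea in your proposal is precisely this replacement of ``lift a sensitive pair'' by ``produce an invariant measure on $X^2$ whose support consists of sensitive pairs''; without it (or a genuine solution of the lifting problem, which the paper does not attempt), your second inclusion does not go through.
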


We define the \emph{(Weyl) mean pseudometric} 
$D:=\sup_\mathcal{F}D_\mathcal{F}$ by taking the supremum over all F{\o}lner sequences $\mathcal{F}$. 
An action $(X,G)$ is called \emph{mean equicontinuous} if $D\in C(X^2)$. 
Note that there exist actions that are $\mathcal{F}$-mean equicontinuous for some F{\o}lner sequence, but not mean equicontinuous \cite{fuhrmann2025continuity}. 
With a similar argument as above we observe that there exists a maximal mean equicontinuous factor $(X_{\text{me}},G)$ for any action $(X,G)$. 
Similarly, we denote $\pi_{\text{me}}\colon X\to X_{\text{me}}$ for the respective factor map and 
\[\RME(X):=\{(x,x')\in X^2;\, \pi_{\text{me}}(x)=\pi_{\text{me}}(x')\}\]
for the \emph{mean equicontinuous structure relation}.
Whenever $G$ is Abelian, or $(X,G)$ is minimal, then $(X_{\text{me}},G)$ and $(X_{\text{$\mathcal{F}$-me}},G)$ coincide for any F{\o}lner sequence $\mathcal{F}$ and our description so far is sufficient in order to study the maximal mean equicontinuous factor and $\RME(X)$.
However, for the general context we introduce the following relations. 
For this we denote
\[\upperBdens(G'):=\inf_{F}\sup_{g\in G}\frac{\haar{G'\cap Fg}}{\haar{F}}\]
for the \emph{upper Banach density} of a (Borel-)measurable subset $G'\subseteq G$, where the infimum is taken over all compact subsets $F\subseteq G$ with $\haar{F}>0$.  

\begin{definition}
    Let $(X,G)$ be an action.
    A pair $(x,x')\in X^2$ is called
    \begin{itemize}
        \item 
            \emph{regionally mean sensitive} if for any open neighbourhood $U$ of $(x,x')$ in $X^2$ 
            there exists an asymptotically diagonal sequence $(x_n,x_n')_{n\in \mathbb{N}}$ that satisfies
            \[\limsup_{n\to \infty}
            \upperBdens(G_U(x_n,x_n'))>0.\]
        \item 
            \emph{regionally joint mean sensitive} 
            if for any open neighbourhood $U$ of $(x,x')$ 
            there exists an asymptotically diagonal sequence $(x_n,x_n')_{n\in \mathbb{N}}$ and a F{\o}lner sequence $\mathcal{F}=(F_n)_{n\in \mathbb{N}}$ such that
            \[\limsup_{n\to \infty} \frac{\haar{G_U(x_n,x_n')\cap F_n}}{\haar{F_n}}>0.\]
    \end{itemize}
    We denote $\RMS(X)$ and $\SM(X)$ for the set of all regionally mean sensitive pairs and all regionally joint mean sensitive pairs, respectively. 
\end{definition}

We summarize our main results regarding these relations as follows. The details can be found in Theorem \ref{the:RMSInSMandFolnerRealisation} and Section \ref{sec:mainTheorem} below. 

\begin{theorem*}
\label{the:introRME_SM}
    Let $(X,G)$ be an action of a locally compact and $\sigma$-compact amenable group on a compact metric space. 
    \begin{enumerate}
    \item 
        For any F{\o}lner sequence $\mathcal{F}$ in $G$ we have $\RMS^\mathcal{F}(X)\subseteq \RMS(X)\subseteq \SM(X)$. 
    \item 
        The following statements are equivalent. 
        \begin{enumerate}
            \item $(X,G)$ is mean equicontinuous
            \item $\RMS(X)\subseteq \Delta_X$.
            \item $\SM(X)\subseteq \Delta_X$. 
        \end{enumerate}
    \item We have $\RME(X)=\icerhull{\RMS(X)}=\icerhull{\SM(X)}$.
    \end{enumerate}
\end{theorem*}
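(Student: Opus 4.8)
The plan is to establish the three parts in order, pushing the Weyl‑type statements back onto the $\mathcal{F}$‑indexed theory of Proposition~\ref{pro:meanEquicontinuityViaRMS} and Theorem~\ref{the:maximalMeanEquicontinuousFactor} wherever possible, and isolating the two genuinely new ingredients: a comparison of the three densities together with the behaviour of the Følner property under right translation, and two diagonalisation arguments that manufacture a single Følner sequence out of many. For Part~(1) I would first record two elementary facts. The inclusion $\RMS^\mathcal{F}(X)\subseteq\RMS(X)$ reduces to the pointwise inequality $\upperAdens_\mathcal{F}(G')\le\upperBdens(G')$, valid for every measurable $G'\subseteq G$: any sufficiently invariant $F_k$ is almost right‑tiled by translates of a fixed window $F$, on each of which the density of $G'$ is at most $\sup_g\haar{G'\cap Fg}/\haar{F}$, so the same asymptotically diagonal sequence witnesses membership in $\RMS(X)$. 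For $\RMS(X)\subseteq\SM(X)$ the point is that right translation preserves the left Følner property: since $(K\Phi_m\Delta\Phi_m)g=K\Phi_m g\,\Delta\,\Phi_m g$ and the modular factor cancels in the quotient $\haar{K\Phi_m g\,\Delta\,\Phi_m g}/\haar{\Phi_m g}$, a right translate of a Følner sequence is again Følner. Starting from a base Følner sequence $(\Phi_m)$ and a pair with $\upperBdens(G_U(x_n,x_n'))>c$, one selects for each $m$ some $g_m$ with $\haar{G_U(x_n,x_n')\cap\Phi_m g_m}/\haar{\Phi_m}>c$ and diagonalises to obtain matched data realising $\SM(X)$.

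For Part~(2) the implication (c)$\Rightarrow$(b) is immediate from Part~(1). I would prove (b)$\Rightarrow$(a) in contrapositive form. If $(X,G)$ is not mean equicontinuous then $D$ is discontinuous on the diagonal, so there are pairs $(y_j,y_j')$ with $d(y_j,y_j')\to0$ but $D(y_j,y_j')\ge\epsilon$. Since $D=\sup_\mathcal{F}D_\mathcal{F}$, for each $j$ there is a Følner sequence along which the mean distance of $(y_j,y_j')$ exceeds $\epsilon/2$, whence by a Chebyshev estimate the times with $d(g.y_j,g.y_j')\ge\epsilon/4$ have positive upper density along that sequence, and in particular positive upper Banach density, bounded below uniformly in $j$ because distances are bounded. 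Covering the compact set $\{(u,v):d(u,v)\ge\epsilon/4\}$ by finitely many small balls, using subadditivity of $\upperBdens$ and a pigeonhole over $j$, and then refining the ball radius, yields a non‑diagonal pair $(a,b)$ every neighbourhood of which is hit with positive upper Banach density by the asymptotically diagonal sequence $(y_j,y_j')$; that is, $(a,b)\in\RMS(X)\setminus\Delta_X$, contradicting (b).

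The hard implication is (a)$\Rightarrow$(c): under mean equicontinuity I must exclude non‑diagonal pairs in the larger relation $\SM(X)$. Given such a pair, the matched data produce a single window $F_n$ on which the mean distance of $(x_n,x_n')$ is at least $\delta_0 c$, whereas continuity of $D$ (read through its Banach‑mean description) supplies, for each pair near the diagonal, a window $F^\ast_n$ all of whose right translates carry mean distance below $\delta_0 c/2$; almost tiling $F_n$ by translates of $F^\ast_n$ would then push the mean over $F_n$ below $\delta_0 c$, a contradiction. I expect the main obstacle of the whole theorem to sit exactly here, in the matching of the two scales $F_n$ and $F^\ast_n$: because the certifying window $F^\ast_n$ may grow with $n$, the single‑window average cannot be dominated by the Weyl mean of a fixed pair, and the Ornstein–Weiss‑type quasi‑tiling must be arranged along a carefully thinned subsequence. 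This is precisely the Følner realisation packaged in Theorem~\ref{the:RMSInSMandFolnerRealisation}, which I would invoke to close (a)$\Rightarrow$(c).

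For Part~(3) I would avoid lifting sensitivity through factor maps and instead exploit the $\mathcal{F}$‑indexed theory. The inclusions $\icerhull{\RMS(X)}\subseteq\icerhull{\SM(X)}\subseteq\RME(X)$ follow once $\SM(X)\subseteq\RME(X)$ is shown: a non‑diagonal pair would push forward under $\pi_{\text{me}}$ to a non‑diagonal pair of $\SM(X_{\text{me}})$, since hitting sets only grow and images of asymptotically diagonal sequences stay asymptotically diagonal, contradicting Part~(2) applied to the mean equicontinuous system $X_{\text{me}}$. For the reverse inclusion I would first establish that mean equicontinuity is equivalent to $\mathcal{F}$‑mean equicontinuity for every Følner sequence $\mathcal{F}$; the forward direction is clear from $D_\mathcal{F}\le D$, while the converse interleaves, for a putative diagonal discontinuity of $D$, Følner sequences $\mathcal{F}^{(k)}$ witnessing $D_{\mathcal{F}^{(k)}}(y_k,y_k')\ge\epsilon/2$ into a single Følner sequence that still fails continuity on the diagonal. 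Consequently $\RME(X)=\icerhull{\bigcup_\mathcal{F}\RME^\mathcal{F}(X)}$, and inserting $\RME^\mathcal{F}(X)=\icerhull{\RMS^\mathcal{F}(X)}$ from Theorem~\ref{the:maximalMeanEquicontinuousFactor} together with $\RMS^\mathcal{F}(X)\subseteq\RMS(X)$ gives $\RME(X)=\icerhull{\bigcup_\mathcal{F}\RMS^\mathcal{F}(X)}\subseteq\icerhull{\RMS(X)}$. Chaining the inclusions forces $\RME(X)=\icerhull{\RMS(X)}=\icerhull{\SM(X)}$, the rest being formal manipulation of factors and densities; the two diagonalisations—the scale‑matched quasi‑tiling in (a)$\Rightarrow$(c) and the interleaving of Følner sequences above—are where the real work lies.
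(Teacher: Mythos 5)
The decisive gap is in your implication (a)$\Rightarrow$(c) of Part~(2), namely that mean equicontinuity forces $\SM(X)\subseteq \Delta_X$. You correctly isolate the difficulty: for a pair in $\SM(X)$ the certifying window $F_n$ is matched to the pair $(x_n,x_n')$ and may grow with $n$, so the single-window average $F_n^*d(x_n,x_n')\geq \delta_0 c$ cannot be dominated by $D_\mathcal{F}(x_n,x_n')$ or $D(x_n,x_n')$, which are limits superior along the \emph{whole} sequence for a \emph{fixed} pair. But your proposed resolution---invoking Theorem~\ref{the:RMSInSMandFolnerRealisation}---does not address this at all: that theorem merely produces one F{\o}lner sequence $\mathcal{F}$ with $\SM(X)=\SM^\mathcal{F}(X)$ by combining countably many F{\o}lner sequences along shared subsequences, and it gives no control on $F_n^*d(x_n,x_n')$ in terms of any mean pseudometric. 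The ingredient the paper actually uses is Lemma~\ref{lem:uniformity}: under mean equicontinuity $F_n^*d\to D_\mathcal{F}$ \emph{uniformly} on $X^2$ (imported from the structure theory in the cited references), whence $F_n^*d(x_n,x_n')\leq \|F_n^*d-D_\mathcal{F}\|_\infty+D_\mathcal{F}(x_n,x_n')\to 0$, the contradiction of Lemma~\ref{lem:MMEFviaSMpreparations}. Your quasi-tiling sketch is in effect an attempt to prove this uniformity on the fly, but as written it is circular: tiling $F_n$ by left translates of a fixed window $F^\ast$ reduces $F_n^*d(x_n,x_n')$ to orbit averages of $(F^\ast)^*d$, and to conclude you would need $(F^\ast)^*d$ to be uniformly close to $D$ on all pairs near the diagonal---precisely the uniform convergence you have not established. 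Since your scheme derives (a)$\Rightarrow$(b) through (a)$\Rightarrow$(c), and since $\icerhull{\SM(X)}\subseteq \RME(X)$ in Part~(3) is obtained by pushing a non-diagonal $\SM$-pair forward to $X_{\text{me}}$ and applying (a)$\Rightarrow$(c) there, this gap propagates to Parts~(2) and~(3) in their entirety.

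Two further remarks. In Part~(1) your route to $\RMS(X)\subseteq\SM(X)$ via right translates $\Phi_m g_m$ has a normalisation mismatch for non-unimodular $G$: the Banach density controls $\haar{G_U(x_n,x_n')\cap \Phi_m g_m}/\haar{\Phi_m}$, while the asymptotic density along the F{\o}lner sequence $(\Phi_m g_m)$ divides by $\haar{\Phi_m g_m}$, and the two differ by the modular function at $g_m$; this is avoided by routing the argument through Lemma~\ref{lem:upperBanachDensityCharacterisation} (a F{\o}lner sequence attaining the Banach density for each $n$) and then diagonalising, which is what the paper does via Lemmas~\ref{lem:diagonallyLinkedAndAssociatedFindFolner} and~\ref{lem:RMS_SMbridge}. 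Where your argument is complete it genuinely diverges from the paper and is workable: the contrapositive proof of (b)$\Rightarrow$(a) by a Chebyshev estimate, a finite cover of $\{d\geq\epsilon/4\}$ and a pigeonhole is an elementary alternative to the paper's route through the measure characterisation (Proposition~\ref{pro:characterisationViaMeasures} with Lemma~\ref{lem:DFviaMeasures}), and deducing $\RME(X)\subseteq\icerhull{\RMS(X)}$ from $\RME(X)=\icerhull{\bigcup_\mathcal{F}\RME^\mathcal{F}(X)}$ together with Theorem~\ref{the:maximalMeanEquicontinuousFactor} and $\RMS^\mathcal{F}(X)\subseteq\RMS(X)$ is a valid alternative to the paper's direct factor characterisation in Theorem~\ref{the:meanEquicontinuityFactorCharacterisation}.
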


In the study of the $(\mathcal{F}-)$mean equicontinuity of an action $(X,G)$ an important role is played by the \emph{maximal support} (also \emph{measure center}) $\supp(X,G)$, which is defined as the union of all supports of invariant Borel probability measures \cite{li2014proximality, fuhrmann2025continuity}.
Note that $\supp(X,G)$ is a closed invariant subset of $X$ and that the amenability yields that this subset is non-empty. See Section \ref{sec:preliminaries} below for further details. 
An action is called \emph{fully supported} whenever $\supp(X,G)=X$. 

To name one application of this concept, it was shown in \cite{fuhrmann2022structure} that for fully supported actions the notion of $\mathcal{F}$-mean equicontinuity does not depend on the choice of the F{\o}lner sequence $\mathcal{F}$. 
In Section \ref{sec:reflexivity} we present that the maximal support is closely linked to the relations $\RMS^{(\mathcal{F})}$ and $\SM^{(\mathcal{F})}$. 
We prove that these relations are reflexive if and only if $X$ is fully supported. 
Furthermore, we show\footnote{
Note that a similar statement also holds for $\WSM^\mathcal{F}$ as presented in \cite[Proposition 5.14]{li2021mean} for the standard F\o lner sequence $\mathcal{F}$ in $\mathbb{Z}$.
} 
that these relations are contained in $\supp(X,G)^2$. 
We use this observation to conclude the following in Corollary \ref{cor:outsideSuppOneToOne}. 

\begin{corollary*}
    The factor map onto the maximal ($\mathcal{F}$-)mean equicontinuous factor is one-to-one on $X\setminus \supp(X,G)$.
\end{corollary*}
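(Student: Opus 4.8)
The plan is to combine the two structural facts established earlier: the identity $\RME^\mathcal{F}(X)=\icerhull{\RMS^\mathcal{F}(X)}$ together with the containment $\RMS^\mathcal{F}(X)\subseteq\supp(X,G)^2$, and their Weyl analogues $\RME(X)=\icerhull{\RMS(X)}$ and $\RMS(X)\subseteq\supp(X,G)^2$. Write $S:=\supp(X,G)$ and let $R$ denote the structure relation in question (either $\RME^\mathcal{F}(X)$ or $\RME(X)$). Since $R$ is the kernel of the corresponding factor map, it is a closed, $G$-invariant equivalence relation with $\Delta_X\subseteq R$. The claim that the factor map is one-to-one off $S$ is equivalent to the assertion that every non-diagonal pair of $R$ lies in $S^2$: indeed, if $R\subseteq S^2\cup\Delta_X$ and $(x,x')\in R$ with $x\notin S$, then $(x,x')\notin S^2$, forcing $(x,x')\in\Delta_X$, i.e.\ $x=x'$. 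This in fact yields the stronger conclusion that the fibre over the image of $x$ is the singleton $\{x\}$ for every $x\notin S$.

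The key step is to verify that
\[E:=(R\cap S^2)\cup\Delta_X\]
is a closed, $G$-invariant equivalence relation. Closedness follows since $S$ is closed and $R$ is closed, so $R\cap S^2$ is closed and $E$ is a finite union of closed sets; invariance and symmetry are inherited from $R$, $S^2$ and $\Delta_X$; and reflexivity holds because $\Delta_X\subseteq E$. For transitivity, suppose $(x,y),(y,z)\in E$. If either pair is diagonal the conclusion is immediate, so assume both lie in $R\cap S^2$; then $x,y,z\in S$ and transitivity of $R$ gives $(x,z)\in R\cap S^2\subseteq E$. Because $\RMS^\mathcal{F}(X)\subseteq R$ and $\RMS^\mathcal{F}(X)\subseteq S^2$, we have $\RMS^\mathcal{F}(X)\subseteq R\cap S^2\subseteq E$, so the minimality of the icer hull gives $R=\icerhull{\RMS^\mathcal{F}(X)}\subseteq E$. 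As $E\subseteq R$ trivially (using $\Delta_X\subseteq R$), we conclude $R=E\subseteq S^2\cup\Delta_X$, which is the desired containment.

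The argument for the maximal mean equicontinuous factor is verbatim the same, replacing $\RMS^\mathcal{F}(X)$ by $\RMS(X)$ and $\RME^\mathcal{F}(X)$ by $\RME(X)$. I do not anticipate a genuine obstacle here, since everything reduces to the closed-invariant-equivalence-relation bookkeeping for $E$; the only subtlety worth flagging is that one must adjoin $\Delta_X$ to $R\cap S^2$ to recover reflexivity (a point outside $S$ is not $R\cap S^2$-related to itself), after which the containment of the generating relation in $S^2$ is precisely what collapses the non-diagonal case of transitivity back into $S$.
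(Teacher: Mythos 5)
Your proof is correct and takes essentially the same route as the paper: both rest on the containment $\RMS^{(\mathcal{F})}(X)\subseteq\supp(X,G)^2$ from Theorem \ref{the:maximalSupportAndRMS_SM} together with the minimality of the icer hull against a closed invariant equivalence relation, yielding $R(\pi)\subseteq\supp(X,G)^2\cup\Delta_X$. The only (immaterial) difference is that the paper uses the simpler envelope $\Delta_X\cup\supp(X,G)^2$, which is an equivalence relation without further checking, whereas you take $(R\cap\supp(X,G)^2)\cup\Delta_X$ and verify transitivity by hand.
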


\subsection*{This article is organised as follows}
The next section collects the general background required throughout the article.
In Section~\ref{sec:interplayOfRelations}, we investigate the relations and
differences between $\RMS^{(\mathcal{F})}$, $\SM^{(\mathcal{F})}$, and $\WSM^{\mathcal{F}}$, and present some of their basic properties.
In Section~\ref{sec:characterisationViaMeasures}, we provide
characterisations of $\RMS^{(\mathcal{F})}$ and $\SM^{(\mathcal{F})}$ via invariant probability measures on $X^2$.
Building on this characterisation, we prove the results on the characterisation of the maximal ($\mathcal{F}-$)mean equicontinuous factor in Section~\ref{sec:mainTheorem}. 
Furthermore, in this section, we discuss the interplay of $\RMS$, $\SM$, the regional proximal relation $\RP$,  and the proximal relation $\P$. 
The details on the relation of the studied notions with the maximal support $\supp(X,G)$ are presented in Section~\ref{sec:reflexivity}. 
In the final section, we give full details on Example \ref{exa:lamplighterExample}. Furthermore, we provide examples that illustrate differences among the studied relations.

\subsection*{Convention}
Throughout this article, $G$ denotes a $\sigma$-compact locally compact amenable group, unless explicitly stated otherwise. 
We fix a (left) Haar measure on $G$, denoted by $\haar{\cdot}$.
Furthermore, in the following we equip each topological space with the respective Borel $\sigma$-algebra and speak of \emph{measurability} with respect to this $\sigma$-algebra. Furthermore, we simply speak of a \emph{probability measure}, whenever we mean a Borel probability measure.

We write $\RMS^{(\mathcal{F})}$ whenever we refer to $\RMS^{\mathcal{F}}$ and $\RMS$ at the same time. A similar abbreviation is applied for $\SM^{(\mathcal{F})}$.

\section{Preliminaries}
\label{sec:preliminaries}
For a set $X$ we denote $\Delta_X:=\{(x,x);\, x\in X\}$. 
We denote $C(X)$ for the Banach space of all continuous functions on a compact metric space $X$. 
We write $A\Delta B:=(A\setminus B) \cup (B\setminus A)$ for the symmetric difference of $A$ and $B$.

\subsection{Probability measures}
Let $X$ be a compact metric space. 
We denote by $\mathcal{M}(X)$ the set of all probability measures on $X$. 
Recall from the Riesz-Markov-Kakutani representation theorem that $\mathcal{M}(X)$ can be identified with a closed subset of the unit ball of the dual $C(X)^*$ of $C(X)$. 
This allows to equip $\mathcal{M}(X)$ with the weak*-topology.
It follows from the Banach-Alaoglu theorem that $\mathcal{M}(X)$ is a compact space. Furthermore, it is metrizable, which can be observed by considering the Wasserstein distance \cite[Chapter 7]{villani2003topics}.

For $\mu\in \mathcal{M}(X)$ we denote $\supp(\mu)$ for the \emph{support of $\mu$}, i.e.\ the set of all $x\in X$ such that any open neighbourhood $U$ of $x$ satisfies $\mu(U)>0$.
It follows from the invariance of $\mu$ that $\supp(\mu)$ is non-empty and invariant. 
Furthermore, it is a closed subset of $X$ as the complement of the union of all open sets $U\subseteq X$ with $\mu(U)=0$. 
For $x\in X$ we denote $\delta_x$ for the \emph{Dirac measure} at $x$, which is the unique probability measure with support $\{x\}$. 

Let $\pi\colon X\to Y$ be a continuous surjection between compact metric spaces. 
For $\mu\in \mathcal{M}(X)$ we denote $\pi_*\mu$ for the \emph{pushforward} of $\mu$ under $\pi$, which is defined by $\pi_*\mu(f):=\mu(f\circ \pi)$ for all $f\in C(Y)$. 
Note that $\pi_*$ establishes a continuous affine surjection $\pi_*\colon \mathcal{M}(X)\to \mathcal{M}(Y)$. 
Furthermore, for $\mu\in \mathcal{M}(X)$ we have 
$\supp(\pi_*\mu)=\pi(\supp(\mu))$ \cite[Theorem 2.4]{Okada1979}.

\subsection{F{\o}lner sequences and amenability}
Let $G$ be a $\sigma$-compact locally compact group. 
A sequence $(F_n)_{n\in \mathbb{N}}$ of compact subsets of $G$ with $\haar{F_n}>0$ is called \emph{(left) F{\o}lner} if for all non-empty compact subsets $K\subseteq G$ we have 
\[\lim_{n\to \infty} \haar{KF_n\Delta F_n}/\haar{F_n}\to 0.\]
$G$ is called \emph{amenable} if it allows for a F{\o}lner sequence.
A F{\o}lner sequence is called \emph{two-sided F{\o}lner} if for all non-empty compact subsets $K\subseteq G$ we additionally have 
$\lim_{n\to \infty} \haar{F_nK\Delta F_n}/\haar{F_n}\to 0$. 

\begin{remark}
    Whenever $G$ is countable, then a sequence of finite non-empty sets $(F_n)_{n\in \mathbb{N}}$ is F{\o}lner if and only if $\haar{gF_n\setminus F_n}/\haar{F_n}\to 0$ for all $g\in G$.
\end{remark}

\begin{remark}
    Any locally compact $\sigma$-compact Abelian group is amenable and allows for a two-sided F{\o}lner sequence. 
    However, not any locally compact $\sigma$-compact amenable group allows for a two-sided F\o lner sequence. 
    A locally compact $\sigma$-compact amenable (and separable) group $G$ allows for a two-sided F\o lner sequence if and only if it is unimodular.\footnote{Recall that a locally compact group is called \emph{unimodular} if $\haar{\cdot}$ is also right invariant. 
    Clearly, any locally compact Abelian group is unimodular.
    }
    This can be observed from combining the arguments from \cite[Appendix 3]{tempelman1992ergodic} with \cite[Proposition 2 (I.§1)]{ornstein1987entropy}.
\end{remark}

\begin{remark}
\label{rem:combiningFolnerFinite}
    Note that for a finite family of F{\o}lner sequences $(\mathcal{F}^{(i)})_{i=1}^n$ one can always find a F{\o}lner sequence $\mathcal{F}$ that has all $\mathcal{F}^{(i)}$ as a subsequence, for example by considering 
    $F_{kn+i}:=F_k^{(i)}$ for $k\in \mathbb{N}$ and $i\in \{1,\dots,n\}$. 
\end{remark}

For countable families the following holds. 

\begin{lemma}\cite[Proposition 2.1]{fuhrmann2025continuity}
\label{lem:combiningFolner}
For any countable family $(\mathcal{F}^{(i)})_{i\in \mathbb{N}}$ of F{\o}lner sequences there exists a F{\o}lner sequence $\mathcal{F}$ such that $\mathcal{F}$ and $\mathcal{F}^{(i)}$ have a common subsequence for all $i\in \mathbb{N}$. 
\end{lemma}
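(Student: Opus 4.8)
The plan is to construct $\mathcal{F}$ by a diagonal selection from the given sequences, using $\sigma$-compactness to reduce the F\o lner condition to countably many test sets. First I would fix an increasing sequence of non-empty compact sets $K_1\subseteq K_2\subseteq\cdots$ with $e\in K_1$ and $\bigcup_k K_k=G$, which is possible since $G$ is $\sigma$-compact. Because each $K_k$ contains the identity, $F\subseteq K_kF$, so $\haar{K_kF\Delta F}=\haar{K_kF\setminus F}$, and $\haar{K_kF\setminus F}$ is non-decreasing in $k$ for fixed $F$. Recall the standard one-sided reformulation: a sequence $(F_k)$ of compact sets of positive measure is F\o lner precisely when $\haar{KF_k\setminus F_k}/\haar{F_k}\to 0$ for every compact $K$. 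Hence it suffices to arrange $\haar{K_kF_k\setminus F_k}/\haar{F_k}<1/k$ at every stage $k$: for a fixed compact $K$, pick $j$ with $K\subseteq K_j$; then for all $k\ge j$ one has $KF_k\setminus F_k\subseteq K_kF_k\setminus F_k$, whence $\haar{KF_k\setminus F_k}/\haar{F_k}<1/k\to 0$.

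Next I would fix a schedule $\sigma\colon\mathbb{N}\to\mathbb{N}$ that attains each value $i\in\mathbb{N}$ infinitely often (for instance the standard triangular enumeration) and define $\mathcal{F}=(F_k)_{k\in\mathbb{N}}$ stage by stage. At stage $k$, write $i=\sigma(k)$ and choose an index $m_k$ strictly larger than every index of $\mathcal{F}^{(i)}$ used at earlier stages with the same $\sigma$-value, subject to
\[
\frac{\haar{K_kF_{m_k}^{(i)}\setminus F_{m_k}^{(i)}}}{\haar{F_{m_k}^{(i)}}}<\frac{1}{k},
\]
and put $F_k:=F_{m_k}^{(i)}$. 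Such an $m_k$ exists because $\mathcal{F}^{(i)}=(F_m^{(i)})_{m\in\mathbb{N}}$ is F\o lner and $K_k$ is a single fixed compact set, so the defect against $K_k$ is below $1/k$ for all sufficiently large $m$, in particular for some $m$ beyond all previously used indices.

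Finally I would verify the two required properties. The F\o lner property of $\mathcal{F}$ is immediate from the reduction in the first paragraph, since the defining inequality $\haar{K_kF_k\setminus F_k}/\haar{F_k}<1/k$ holds at every stage. For the common subsequence property, fix $i\in\mathbb{N}$ and let $k_1<k_2<\cdots$ enumerate the stages with $\sigma(k_\ell)=i$; by construction the associated indices satisfy $m_{k_1}<m_{k_2}<\cdots$, so the sequence $(F_{k_\ell})_{\ell\in\mathbb{N}}=(F_{m_{k_\ell}}^{(i)})_{\ell\in\mathbb{N}}$ is simultaneously a subsequence of $\mathcal{F}$ and of $\mathcal{F}^{(i)}$. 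I expect the only genuine obstacle to be achieving these two goals at once: diagonalizing against the \emph{moving} test sets $K_k$ is what forces the F\o lner property of $\mathcal{F}$, while the \emph{monotone} choice of the indices $m_k$ is what secures honest common subsequences, and $\sigma$-compactness is precisely what collapses the continuum of F\o lner test sets into the countable exhaustion $(K_k)_k$ that makes the diagonal argument run.
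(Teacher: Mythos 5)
The paper does not prove this lemma itself---it is imported verbatim from \cite[Proposition 2.1]{fuhrmann2025continuity}---so there is no in-text argument to compare against. Your diagonal construction is correct: the one-sided reformulation of the F{\o}lner condition is valid (for a non-empty compact $K$, apply the one-sided bound to $K\cup K^{-1}\cup\{e\}$ and use left invariance to control $\haar{F_k\setminus KF_k}\leq\haar{g^{-1}F_k\setminus F_k}$ for $g\in K$), the scheduling function together with the strictly increasing choice of the indices $m_k$ does produce honest common subsequences, and the moving test sets $K_k$ force the F{\o}lner property of the diagonal sequence. One small point to tighten: an arbitrary increasing sequence of compact sets with union $G$ need not absorb every compact set (one can arrange $\bigcup_k K_k=G$ while some compact $K$ meets the complement of every $K_j$), so you should choose the exhaustion with $K_j\subseteq\operatorname{int}(K_{j+1})$---which exists because $G$ is locally compact as well as $\sigma$-compact---so that compactness guarantees $K\subseteq K_j$ for some $j$. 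With that adjustment the argument is complete.
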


\subsection{Upper densities}
Consider a measurable subset $G'\subseteq G$. 
For a F{\o}lner sequence $\mathcal{F}=(F_n)$ we define the 
\emph{upper asymptotic density} as $\upperAdens_\mathcal{F}(G'):=\limsup_{n\to \infty}\haar{G'\cap F_n}/\haar{F_n}$. 
Furthermore, we define the \emph{upper Banach density} as 
$\upperBdens(G'):=\inf_{F}\sup_{g\in G}{\haar{G'\cap Fg}}/{\haar{F}}$, 
where the infimum is taken over all compact $F\subseteq G$ with $\haar{F}>0$. 
The following is well known and can be found in \cite[Lemma 2.8]{hauser2025mean}. 

\begin{lemma}
    \label{lem:upperBanachDensityCharacterisation}
    For any measurable $G'\subseteq G$ we have 
    \[
        \upperBdens(G')
        =\max_{\mathcal{F}}\upperAdens_{\mathcal{F}}(G'),
    \]
    where the maximum, taken over all  F{\o}lner sequences $\mathcal{F}$ in $G$, is attained. 
\end{lemma}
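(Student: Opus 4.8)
The plan is to establish the two inequalities $\sup_{\mathcal{F}}\upperAdens_{\mathcal{F}}(G')\le\upperBdens(G')$ and $\sup_{\mathcal{F}}\upperAdens_{\mathcal{F}}(G')\ge\upperBdens(G')$ separately, and then to upgrade the resulting supremum to an attained maximum by means of Lemma~\ref{lem:combiningFolner}. The first inequality is a soft averaging (covering) estimate, the second is an explicit construction of near-optimal Følner sequences, and the attainment statement is where the combination lemma does the real work.

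For the inequality $\upperAdens_{\mathcal{F}}(G')\le\upperBdens(G')$ I would fix an arbitrary compact test set $F$ with $\haar{F}>0$, write $D:=\sup_{g\in G}\haar{G'\cap Fg}/\haar{F}$, and compare $\haar{G'\cap F_n}$ with the slices $\haar{G'\cap F_n\cap Fg}$. First I would average these slices over $g$ against a right Haar measure, chosen precisely so that the kernel $g\mapsto\mathbf{1}_{Fg}(h)$ integrates to a constant independent of $h$; by Fubini this average equals a fixed multiple of $\haar{G'\cap F_n}$. Next I would bound each slice by $\haar{G'\cap Fg}\le D\haar{F}$ and observe that the integrand is supported on $g\in F^{-1}F_n$. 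Combining these estimates and dividing by $\haar{F_n}$ leaves the ratio $\haar{F^{-1}F_n}/\haar{F_n}$, which the Følner property forces to satisfy $\limsup_n\haar{F^{-1}F_n}/\haar{F_n}\le 1$ (since $F^{-1}$ is compact and $\haar{F^{-1}F_n\setminus F_n}/\haar{F_n}\to 0$). Letting $n\to\infty$ gives $\upperAdens_{\mathcal{F}}(G')\le D$, and taking the infimum over $F$ yields the claim.

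For the reverse inequality I would start from any Følner sequence $(E_n)$ and exploit that $\upperBdens(G')$ is an infimum over test sets: in particular $\sup_{g}\haar{G'\cap E_n g}/\haar{E_n}\ge\upperBdens(G')$ for every $n$, so I may pick $g_n$ with $\haar{G'\cap E_n g_n}/\haar{E_n}$ within $1/n$ of this supremum. The right translates $(E_n g_n)$ again form a Følner sequence, because right translation preserves the left Følner property: the modular factor cancels in the ratio $\haar{K E_n g_n\Delta E_n g_n}/\haar{E_n g_n}=\haar{K E_n\Delta E_n}/\haar{E_n}$. Since right translation alters the two normalisations $\haar{G'\cap E_n g_n}/\haar{E_n}$ and $\haar{G'\cap E_n g_n}/\haar{E_n g_n}$ only by the modular function (trivial in the unimodular case), their asymptotic density realises $\upperBdens(G')$, giving $\sup_{\mathcal{F}}\upperAdens_{\mathcal{F}}(G')=\upperBdens(G')$. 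To realise this supremum as a maximum, I would take Følner sequences $\mathcal{F}^{(k)}$ with $\upperAdens_{\mathcal{F}^{(k)}}(G')\to\upperBdens(G')$, pass to subsequences along which each $\limsup$ is attained as a genuine limit, and apply Lemma~\ref{lem:combiningFolner} to obtain a single Følner sequence $\mathcal{F}^{*}$ sharing a common subsequence with each $\mathcal{F}^{(k)}$. Along such a common subsequence the density converges to $\upperAdens_{\mathcal{F}^{(k)}}(G')$, and as it is also a subsequence of $\mathcal{F}^{*}$ we obtain $\upperAdens_{\mathcal{F}^{*}}(G')\ge\upperAdens_{\mathcal{F}^{(k)}}(G')$ for all $k$; hence $\upperAdens_{\mathcal{F}^{*}}(G')=\upperBdens(G')$.

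The hard part will be precisely this last step: a priori the supremum over the (uncountably many) Følner sequences need not be attained, and it is Lemma~\ref{lem:combiningFolner}, together with the preliminary passage to convergent subsequences, that overcomes this. A secondary technical point, invisible in the discrete case, is the bookkeeping of the modular function in the averaging step of the first inequality and in the translation step of the second; I would handle it by always averaging against the appropriate invariant measure so that the relevant kernel is constant, which is the one place where the general locally compact setting genuinely goes beyond the unimodular one.
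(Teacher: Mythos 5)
The paper itself offers no proof of this lemma (it is quoted from \cite[Lemma 2.8]{hauser2025mean}), so your argument can only be judged on its own terms. Its skeleton is the standard one and is correct whenever $G$ is unimodular (in particular for all discrete and all Abelian groups, which covers every example in the paper): the double-counting estimate gives $\upperAdens_{\mathcal{F}}(G')\le\upperBdens(G')$ for every F{\o}lner sequence, and the right-translation trick produces a F{\o}lner sequence whose upper asymptotic density is at least $\upperBdens(G')$. One remark on economy: the sequence $(E_ng_n)$ you build in the second step has $n$-th density ratio within $1/n$ of a quantity that is $\ge\upperBdens(G')$, so together with the first inequality it already attains the maximum outright; the detour through Lemma~\ref{lem:combiningFolner}, which you single out as ``the hard part'', is valid but redundant.

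The genuine soft spot is precisely the modular-function bookkeeping you defer to a ``secondary technical point''. In the first inequality, averaging against a right Haar measure $\rho$ does make the kernel constant, but the support estimate then yields $\rho(F^{-1}F_n)=\haar{F_n^{-1}F}$ rather than $\haar{F^{-1}F_n}$; the left F{\o}lner property controls the latter and says nothing about the former, and the two agree only in the unimodular case. Likewise, in the second inequality the two normalisations differ by the factor $\haar{E_ng_n}/\haar{E_n}$, which is unbounded along any sequence $(g_n)$ escaping compact sets when $G$ is not unimodular. No choice of invariant measure can repair this, because with the paper's normalisation the statement itself fails for non-unimodular $G$: taking $G'=G$ gives $\sup_{g\in G}\haar{Fg}/\haar{F}=\sup_{g\in G}\Delta_G(g)=\infty$ for every compact $F$ (where $\Delta_G$ denotes the modular function), hence $\upperBdens(G)=\infty$ while $\upperAdens_{\mathcal{F}}(G)=1$. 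So either unimodularity must be assumed --- after which your proof is complete --- or the denominator in the definition of $\upperBdens$ must be $\haar{Fg}$; your claim to handle the general locally compact case by ``averaging against the appropriate invariant measure'' does not, and cannot, go through as stated.
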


\subsection{Actions}
Let $G$ be a topological group and $X$ be a compact metric space. A \emph{(continuous) action} of $G$ on $X$ is a group homomorphism from $G$ into the group of homeomorphisms on $X$ such that the induced map $G\times X\to X, ~(g,x)\mapsto g.x$ is continuous.
Throughout the text we denote $(X,G)$ for an action and keep the group homomorphism implicit. 
We recommend \cite{auslander1988minimal} for a detailed exposition on actions. 

For $x\in X$, we denote $G.x:=\{g.x:g\in G\}$ for the \emph{orbit} of $x$. A point $x$ is called \emph{transitive} if $G.x$ is dense in $X$. An action $(X,G)$ is called \emph{transitive} if there exists a transitive point, and \emph{minimal} if each $x\in X$ is transitive. A subset $M\subseteq X$ is called \emph{invariant} if $M=g.M:=\{g.x;\, x\in M\}$ holds for all $g\in G$. 

A continuous surjection $\pi\colon X\to Y$ between $G$–actions is called a
\emph{factor map} if $\pi(g.x)=g.\pi(x)$ holds for all $(g,x)\in G\times X$. 
A factor map is called a \emph{conjugacy} if it is a homeomorphism and two actions are called \emph{conjugated} whenever there exists a conjugacy between them. 

On $X^2$ we consider the action of $G$ given by 
$g.(x,x'):=(g.x,g.x')$ for $g\in G$. 
A relation $Q\subseteq X^2$ is called \emph{invariant} if it is invariant w.r.t.\ this action. It is called \emph{closed} if it is a closed subset of $X^2$. 
Whenever $R$ is a closed invariant equivalence relation than $G$ induces an action on $X\big/G$ via $g.[x]_R:=[g.x]_R$.
Furthermore, note that the intersection of closed invariant equivalence relations is a closed invariant equivalence relation. For $Q\subseteq X^2$ we denote $\icerhull{Q}$ for the smallest closed invariant equivalence relation containing $Q$. 

For a factor map $\pi\colon X\to Y$ we denote $R(\pi):=\{(x,x');\, \pi(x)=\pi(x')\}$ for the respective \emph{fibre relation}. 
Note that $R(\pi)$ is a closed invariant equivalence relation and that $(Y,G)$ is conjugated to $(X\big/R(\pi),G)$.

\subsection{Cesàro averages}
Let $(X,G)$ be an action. 
For $g\in G$ and $f\in C(X)$ we denote 
$g^*f:=f\circ g$ for the \emph{pullback of $f$ under $g$}, where we identify $g$ with the induced homeomorphism. 
Furthermore, for $g\in G$ and $\mu\in \mathcal{M}_G(X)$ we denote $g_*\mu$ for the pushforward of $\mu$ under the homeomorphism induced by $g$. 
Note that $g^*f\in C(X)$ and $g_*\mu\in \mathcal{M}(X)$. 

Furthermore, for $F\subseteq G$ compact with $\haar{F}>0$ we denote 
\[F^*f:=\frac{1}{\haar{F}}\int_F (g^*f)\dd g
\hspace{1cm}\text{and}\hspace{1cm}
F_*\mu:=\frac{1}{\haar{F}}\int_F g_*\mu \dd g\]
for the respective \emph{Cesàro averages}. 
Note that 
$F^*f\in C(X)$
and that
$F_*\mu\in \mathcal{M}(X)$. 
Furthermore, we have $(F_*\mu)(f)=\mu(F^*f)$. 

\subsection{Invariant Borel probability measures and generic points}
Let $(X,G)$ be an action. 
We call $\mu\in \mathcal{M}(X)$ \emph{invariant} if $g_*\mu=\mu$ for all $g\in G$ and denote $\mathcal{M}_G(X)$ for the set of all invariant $\mu\in \mathcal{M}(X)$. 
A measure $\mu\in\mathcal{M}_G(X)$ is called \emph{ergodic} if for every invariant measurable set $M\subset X$ we have $\mu(M)\in\{0,1\}$. 
We denote by
$\mathcal{M}_G^e(X)$ the set of all ergodic $\mu\in \mathcal{M}_G(X)$.
If $\pi\colon X\to Y$ is a factor map, then we have $\pi_*\mu\in \mathcal{M}_G(Y)$ for all $\mu\in \mathcal{M}_G(X)$. 

For $x\in X$ and a F{\o}lner sequence $\mathcal{F}=(F_n)_{n\in \mathbb{N}}$ it follows from the compactness of $\mathcal{M}(X)$ that $((F_n)_*\delta_x)_{n\in \mathbb{N}}$ allows for cluster points. 
Furthermore, a standard Krylov-Bogolyubov argument yields that any such cluster point is invariant. 
For $x\in X$, a F{\o}lner sequence $\mathcal{F}=(F_n)_{n\in \mathbb{N}}$ and $\mu\in \mathcal{M}_G(X)$ we say that $x$ is \emph{$\mathcal{F}$-generic} for $\mu$ if $(F_n)_*\delta_x\to \mu$. 

\begin{lemma}{\cite{fuhrmann2022structure, fuhrmann2025continuity}}
\label{lem:genericPoints}
    Let $(X,G)$ be an action and $\mu\in \mathcal{M}_G(X)$ ergodic.
    \begin{enumerate}
        \item 
        Every left F{\o}lner sequence $\mathcal{F}$ allows for a subsequence $\mathcal{F}'$, such that 
    \linebreak
        $\mu$-almost every point $x$ is $\mathcal{F}'$-generic for $\mu$ \cite[Theorem 2.4]{fuhrmann2022structure}. 
        \item 
        Whenever $x\in X$ satisfies $\supp(\mu)\subseteq \overline{G.x}$ there exists a left F{\o}lner sequence $\mathcal{F}$ such that $x$ is $\mathcal{F}$-generic for $\mu$ \cite[Lemma 3.10]{fuhrmann2025continuity}. 
    \end{enumerate}
\end{lemma}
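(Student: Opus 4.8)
The plan is to treat the two parts separately, using the mean ergodic theorem for part (1) and then bootstrapping from it for part (2).

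For part (1), I would start from the observation that, in the paper's notation, $x$ is $\mathcal{F}'$-generic for $\mu$ precisely when $(F_n')^*f(x)\to\mu(f)$ for every $f\in C(X)$. Since $X$ is a compact metric space, $C(X)$ is separable, so I fix a countable dense set $\{f_k\}_{k\in\mathbb{N}}$. The mean (von Neumann) ergodic theorem for actions of amenable groups holds along an arbitrary Følner sequence, so $F_n^*f_k\to\mu(f_k)$ in $L^2(\mu)$ for each $k$, the limit being the constant $\mu(f_k)$ because $\mu$ is ergodic. It is only the pointwise theorem that would require temperedness; the $L^2$ statement is unconditional, which is exactly why part (1) claims convergence merely along a subsequence.

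Convergence in $L^2(\mu)$ forces a.e.\ convergence along a subsequence, so a standard diagonal extraction over $k$ produces a single Følner subsequence $\mathcal{F}'=(F_n')$ together with a full-measure set $N=\bigcap_k N_k$ on which $(F_n')^*f_k(x)\to\mu(f_k)$ simultaneously for all $k$. Since each operator $f\mapsto (F_n')^*f$ is a contraction for $\haar{\cdot}_\infty$ and $\{f_k\}$ is dense, an $\varepsilon/3$-argument upgrades this to $(F_n')^*f(x)\to\mu(f)$ for every $f\in C(X)$ and every $x\in N$; thus $\mu$-a.e.\ point is $\mathcal{F}'$-generic, proving part (1).

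For part (2), I would feed the subsequence from part (1) into an approximation scheme. Apply part (1) to the given $\mathcal{F}$ to obtain $\mathcal{F}'=(F_n')$ along which $\mu$-a.e.\ point is generic; since $\mu(X\setminus\supp(\mu))=0$, the generic points meet every open set hitting $\supp(\mu)$, so I may fix one generic $y\in\supp(\mu)$. By hypothesis $\supp(\mu)\subseteq\overline{G.x}$, hence $y\in\overline{G.x}$ and there are $g_l\in G$ with $g_l.x\to y$. The two ingredients are now $(F_n')_*\delta_y\to\mu$ as $n\to\infty$ (genericity of $y$), and, for each fixed $n$, $(F_n')_*\delta_{g_l.x}\to(F_n')_*\delta_y$ as $l\to\infty$ — the latter because $(h,z)\mapsto f(h.z)$ is uniformly continuous on the compact set $F_n'\times X$, so $\sup_{h\in F_n'}\haar{f(h.g_l.x)-f(h.y)}\to0$. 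Fixing a metric $\rho$ for the weak*-topology on $\mathcal{M}(X)$, I then choose for each $m$ an index $n_m\ge m$ with $\rho((F_{n_m}')_*\delta_y,\mu)<1/(2m)$, and then $l_m$ with $\rho((F_{n_m}')_*\delta_{g_{l_m}.x},(F_{n_m}')_*\delta_y)<1/(2m)$, and set $F_m:=F_{n_m}'g_{l_m}$.

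It remains to verify the two required properties of $\mathcal{F}:=(F_m)$, and this is where the only genuine technical care is needed, coming from the possible non-unimodularity of $G$. First, a change of variables $h=h'g_{l_m}$ in the Cesàro average, in which the modular factor $\haar{Eg}=\operatorname{mod}(g)\haar{E}$ cancels between numerator and denominator, gives the identity $(F_m)_*\delta_x=(F_{n_m}')_*\delta_{g_{l_m}.x}$; combined with the triangle inequality this yields $\rho((F_m)_*\delta_x,\mu)<1/m\to0$, i.e.\ $x$ is $\mathcal{F}$-generic for $\mu$. Second, $\mathcal{F}$ is Følner: for compact $K$ one has $KF_m\Delta F_m=(KF_{n_m}'\Delta F_{n_m}')g_{l_m}$, and the same modular factor cancels, so $\haar{KF_m\Delta F_m}/\haar{F_m}=\haar{KF_{n_m}'\Delta F_{n_m}'}/\haar{F_{n_m}'}\to0$ because $n_m\to\infty$ and $(F_n')$ is Følner. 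The main obstacle to watch is precisely this bookkeeping: right-translating the Følner sets is what lets the orbit of $x$ catch up to the generic point $y$, and one must check that right translation both preserves the left-Følner property and commutes with the passage to empirical measures even when $G$ fails to be unimodular — which it does, thanks to the cancellation of $\operatorname{mod}(g)$.
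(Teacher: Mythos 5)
Your proposal is correct, and no divergence from the paper is possible to report: the paper does not prove this lemma at all, but simply cites \cite[Theorem~2.4]{fuhrmann2022structure} for part (1) and \cite[Lemma~3.10]{fuhrmann2025continuity} for part (2), and your two arguments — the $L^2$ mean ergodic theorem (valid along any left F{\o}lner sequence, no temperedness needed) plus diagonal subsequence extraction and a density/contraction upgrade for (1), and right-translated F{\o}lner sets $F_m:=F'_{n_m}g_{l_m}$ with the modular factor cancelling in both the empirical measures and the F{\o}lner ratios for (2) — are exactly the standard proofs given in those references. All the delicate points (ergodicity forcing the constant limit, a generic point existing inside $\supp(\mu)$, the identity $(F'g)_*\delta_x=(F')_*\delta_{g.x}$, and preservation of the left F{\o}lner property under right translation) are handled correctly.
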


\subsection{The maximal support}
Let $(X,G)$ be an action. We denote 
\[\supp(X,G):=\bigcup_{\mu\in \mathcal{M}_G(X)}\supp(\mu)\]
for the \emph{maximal support} (also \emph{center/measure center}) of $(X,G)$.
An action $(X,G)$ is called \emph{fully supported} if $\supp(X,G)=X$. 

Recall that we assume $G$ to amenable. 
Thus, we observe $\supp(X,G)$ to be a non-empty invariant subset of $(X,G)$. 
Furthermore, it is well-known that $\supp(X,G)$ is closed.
This follows for example from the following lemma. 
We include a short proof for the reader’s convenience. 
See \cite{li2014proximality} and the references within for further reading on the maximal support. 

\begin{lemma}
\label{lem:characterisationMaximalSupport}
    Let $(X,G)$ be an action and $x\in X$. 
    We have $x\in \supp(X,G)$ if and only if for all open neighbourhoods $U$ of $x$ there exists $\mu\in \mathcal{M}_G(X)$ such that $\mu(U)>0$. 
\end{lemma}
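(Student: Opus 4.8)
The plan is to establish the two implications of the equivalence separately. The forward direction is a direct unravelling of the definitions, while the reverse direction contains the one genuinely nontrivial point, namely the passage from a neighbourhood-dependent family of measures to a single measure.

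For the forward implication I would argue as follows. Assuming $x\in\supp(X,G)$, the definition of the maximal support provides some $\mu\in\mathcal{M}_G(X)$ with $x\in\supp(\mu)$. By the definition of the support of a measure, every open neighbourhood $U$ of $x$ then satisfies $\mu(U)>0$, so this single $\mu$ already witnesses the desired property for all $U$ and nothing further is needed.

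The reverse implication is where the work lies, and the difficulty to anticipate is precisely that the hypothesis only supplies, for each open neighbourhood $U$ of $x$, a measure with $\mu(U)>0$, and this measure may depend on $U$; the conclusion $x\in\supp(X,G)$ instead requires one invariant measure with positive mass on \emph{every} neighbourhood of $x$ simultaneously. I would bridge this gap by a countable averaging argument. Since $X$ is metrizable, I would first fix a countable neighbourhood basis $(U_n)_{n\in\mathbb{N}}$ at $x$, and for each $n$ use the hypothesis to select $\mu_n\in\mathcal{M}_G(X)$ with $\mu_n(U_n)>0$. I would then form
\[
\mu:=\sum_{n\in\mathbb{N}}2^{-n}\mu_n,
\]
interpreted as the positive linear functional $f\mapsto\sum_n 2^{-n}\mu_n(f)$ on $C(X)$. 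Absolute convergence is clear from $|\mu_n(f)|\le\|f\|_\infty$, and since $\mu(1)=1$ the Riesz-Markov-Kakutani theorem realises $\mu$ as an element of $\mathcal{M}(X)$.

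To finish I would verify that $\mu\in\mathcal{M}_G(X)$ and that $x\in\supp(\mu)$. Invariance follows immediately from the linearity of pushforward over the sum together with the invariance $g_*\mu_n=\mu_n$ of each summand, giving $g_*\mu=\sum_n 2^{-n}g_*\mu_n=\mu$ for every $g\in G$. For the support, given any open neighbourhood $V$ of $x$ I would choose $n$ with $U_n\subseteq V$ and invoke monotonicity of $\mu$ to obtain $\mu(V)\ge\mu(U_n)\ge 2^{-n}\mu_n(U_n)>0$; as $V$ is arbitrary, $x\in\supp(\mu)\subseteq\supp(X,G)$. The only step requiring genuine thought is the construction of the single measure $\mu$ from the $U$-dependent family, which the weighted countable sum resolves; the remaining verifications are routine consequences of linearity and monotonicity.
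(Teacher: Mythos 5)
Your proof is correct and follows essentially the same route as the paper: the forward direction is the same unravelling of definitions, and the reverse direction uses the identical countable-neighbourhood-base construction $\mu=\sum_n 2^{-n}\mu_n$ with the same invariance and support verifications. (Your inequality $\mu(V)\ge 2^{-n}\mu_n(U_n)>0$ is in fact stated more carefully than the paper's, which contains a harmless typo writing $2^{n}$ for $2^{-n}$.)
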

\begin{proof}
    By the definition of the maximal support for $x\in \supp(X,G)$ there exists $\mu\in \mathcal{M}_G(X)$ such that $x\in \supp(\mu)$. 
    For any open neighbourhood $U$ of $x$ we observe that $\mu(U)>0$. 

    For the converse consider $x\in X$ such that for any open neighbourhood $U$ of $x$ there exists $\mu\in \mathcal{M}_G(X)$ such that $\mu(U)>0$. 
    Let $\mathcal{B}$ be a countable neighbourhood base for $x$ and enumerate $\mathcal{B}=\{B_n;\, n\in \mathbb{N}\}$. 
    For $n\in \mathbb{N}$ choose $\mu_n\in \mathcal{M}_G(X)$ with $\mu_n(B_n)>0$. 
    Denote $\mu:=\sum_{n=1}^\infty 2^{-n}\mu_n$ and note that $\mu\in \mathcal{M}_G(X)$. 
    To show that $x\in \supp(\mu)$ consider any open neighbourhood $U$ of $x$. 
    There exists $n\in \mathbb{N}$ such that $x\in B_n\subseteq U$ and we observe 
    $0<2^n\mu_n(B_n)\leq \mu(B_n)\leq \mu(U)$. We thus have that $x\in \supp(\mu)\subseteq \supp(X,G)$. 
\end{proof}

Considering product measures and pushforwards along the coordinate projections it is straightforward to obtain the following well-known statement.
See \cite[Proposition 3.12]{li2014proximality} for reference in the context of actions of $\mathbb{Z}$. 

\begin{lemma}
\label{lem:productSupport}
    For an action $(X,G)$ we have $\supp(X,G)^2=\supp(X^2,G)$. 
\end{lemma}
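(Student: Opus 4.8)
The plan is to establish the set equality by proving the two inclusions separately, using product measures for one direction and the coordinate projections for the other. Both directions rest on the two facts already recorded in the preliminaries: that pushforwards under factor maps send invariant measures to invariant measures, and that the support commutes with pushforward, i.e.\ $\supp(\pi_*\mu)=\pi(\supp(\mu))$.

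For the inclusion $\supp(X,G)^2\subseteq\supp(X^2,G)$, I would begin with a pair $(x,x')\in\supp(X,G)^2$ and choose $\mu,\mu'\in\mathcal{M}_G(X)$ with $x\in\supp(\mu)$ and $x'\in\supp(\mu')$. The candidate measure on $X^2$ is the product $\mu\times\mu'$. The two points to verify are that $\mu\times\mu'\in\mathcal{M}_G(X^2)$ with respect to the diagonal action, which follows since the pushforward of $\mu\times\mu'$ under the homeomorphism induced by $g$ equals $(g_*\mu)\times(g_*\mu')=\mu\times\mu'$ for every $g\in G$, and that $\supp(\mu\times\mu')=\supp(\mu)\times\supp(\mu')$, the standard support identity for product measures on compact metric spaces. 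Together these yield $(x,x')\in\supp(\mu\times\mu')\subseteq\supp(X^2,G)$.

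For the reverse inclusion $\supp(X^2,G)\subseteq\supp(X,G)^2$, I would take $(x,x')\in\supp(\nu)$ for some $\nu\in\mathcal{M}_G(X^2)$ and push $\nu$ forward along the two coordinate projections $p_1,p_2\colon X^2\to X$. Since each $p_i$ intertwines the diagonal action on $X^2$ with the given action on $X$, it is a factor map, so $(p_i)_*\nu\in\mathcal{M}_G(X)$. Applying $\supp((p_i)_*\nu)=p_i(\supp(\nu))$ gives $x=p_1(x,x')\in\supp((p_1)_*\nu)\subseteq\supp(X,G)$ and likewise $x'\in\supp(X,G)$, whence $(x,x')\in\supp(X,G)^2$.

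Overall this is a bookkeeping argument assembled from the cited lemmas, and I do not expect a substantial obstacle. The only genuinely new verifications are the invariance of the product measure under the diagonal action and the product-support identity in the forward direction; both are routine for finite Borel measures on compact metric spaces, so the main care needed is merely to keep the notation for the action on $X$ and on $X^2$ straight.
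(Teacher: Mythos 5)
Your proof is correct and follows exactly the route the paper indicates (it states the lemma as ``straightforward'' via product measures for one inclusion and pushforwards along the coordinate projections for the other, citing \cite[Proposition 3.12]{li2014proximality}). Both of your inclusions are sound, and the two auxiliary facts you isolate—invariance of $\mu\times\mu'$ under the diagonal action and $\supp(\mu\times\mu')=\supp(\mu)\times\supp(\mu')$—are indeed the only verifications needed beyond the cited identity $\supp(\pi_*\mu)=\pi(\supp(\mu))$.
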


\subsection{Mean equicontinuity}
Let $(X,G)$ be an action. For a F{\o}lner sequence $\mathcal{F}=(F_n)_n$ we denote 
\[D_\mathcal{F}(x,x'):=\limsup_{n\to \infty} F_n^*d(x,x')\]
for the \emph{(Besicovitch) $\mathcal{F}$-mean pseudometric}. 
A point $x\in X$ is called \emph{$\mathcal{F}$-mean equicontinuous} if for any $\epsilon>0$ there exists a neighbourhood $U$ of $x$ such that 
$D_\mathcal{F}(x,x')<\epsilon$ for all $x'\in U$. 
An action is called \emph{$\mathcal{F}$-mean equicontinuous} if all $x\in X$ are $\mathcal{F}$-mean equicontinuous. 

\begin{remark}
    Exploring that $D_\mathcal{F}$ is a pseudometric it is straightforward to show that an action is $\mathcal{F}$-mean equicontinuous if and only if $D_\mathcal{F}\in C(X^2)$. 
\end{remark}

Taking the supremum over all F{\o}lner sequences $\mathcal{F}$ we denote $D:=\sup_\mathcal{F}D_\mathcal{F}$
for the \emph{(Weyl) mean pseudometric}. 
A point $x\in X$ is called \emph{mean equicontinuous} if for any $\epsilon>0$ there exists a neighbourhood $U$ of $x$ such that 
$D(x,x')<\epsilon$ for all $x'\in U$. 
An action is called \emph{mean equicontinuous} if all $x\in X$ are mean equicontinuous.
The main technique for the proof of the following lemma can be found in \cite[Proposition 3.3]{fuhrmann2025continuity} in the context of another pseudometric. We include the short proof for the convenience of the reader. 

\begin{lemma}
\label{lem:meanEquicontinuityCharacterisation}
    For an action $(X,G)$ the following statements are equivalent. 
    \begin{enumerate}
        \item \label{enu:MEC_meanEquicontinuousPoints}
        $(X,G)$ is mean equicontinuous. 
        \item \label{enu:MEC_Dcontinuous}
        $D\in C(X^2)$. 
        \item \label{enu:MEC_allFolner}
        $(X,G)$ is $\mathcal{F}$-mean equicontinuous for all F{\o}lner sequences $\mathcal{F}$ in $G$. 
    \end{enumerate}
\end{lemma}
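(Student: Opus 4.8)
The plan is to prove the cycle $(1)\Rightarrow(3)\Rightarrow(1)$ together with the equivalence $(1)\Leftrightarrow(2)$, where only the implication $(3)\Rightarrow(1)$ requires real work.

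First I would record that $D=\sup_{\mathcal F}D_{\mathcal F}$ is itself a bounded pseudometric on $X$: each $D_{\mathcal F}$ is symmetric, vanishes on the diagonal and is bounded by $\operatorname{diam}(X)$, and a supremum of pseudometrics again satisfies the triangle inequality, so $D$ inherits all of these properties. Given this, the equivalence $(1)\Leftrightarrow(2)$ is the same pseudometric argument as in the remark preceding the lemma, now applied to $D$ instead of $D_{\mathcal F}$: if $D\in C(X^2)$, then for fixed $x$ the map $x'\mapsto D(x,x')$ is continuous and vanishes at $x$, which is precisely mean equicontinuity at $x$; conversely, mean equicontinuity at every point together with the estimate $|D(x,y)-D(x',y')|\le D(x,x')+D(y,y')$ and the symmetry of $D$ forces $D$ to be (sequentially, hence) continuous on the metrizable space $X^2$. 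The implication $(1)\Rightarrow(3)$ is then immediate from the domination $D_{\mathcal F}\le D$: a neighbourhood on which $D(x,\cdot)<\epsilon$ is a fortiori a neighbourhood on which $D_{\mathcal F}(x,\cdot)<\epsilon$, for every F{\o}lner sequence $\mathcal F$.

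For $(3)\Rightarrow(1)$ I would argue by contraposition. If $(X,G)$ fails to be mean equicontinuous, there are a point $x$, a constant $\epsilon>0$ and a sequence $x_n\to x$ with $D(x,x_n)\ge\epsilon$. Since $D(x,x_n)=\sup_{\mathcal F}D_{\mathcal F}(x,x_n)$, for each $n$ I may pick a F{\o}lner sequence $\mathcal F^{(n)}$ with $D_{\mathcal F^{(n)}}(x,x_n)\ge\epsilon/2$. The decisive preparatory move is to pass to a subsequence of $\mathcal F^{(n)}$ along which the $\limsup$ defining $D_{\mathcal F^{(n)}}(x,x_n)$ is attained as a genuine limit; this subsequence exists by the definition of $\limsup$, is again F{\o}lner, and still yields the value $\ge\epsilon/2$. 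Applying Lemma~\ref{lem:combiningFolner} to the countable family $(\mathcal F^{(n)})_{n\in\mathbb N}$ produces a single F{\o}lner sequence $\mathcal F$ which, for every $n$, shares a common subsequence $\mathcal F''_n$ with $\mathcal F^{(n)}$. Because the averages along $\mathcal F^{(n)}$ now converge, they converge to the same limit $\ge\epsilon/2$ along the subsequence $\mathcal F''_n$, so $D_{\mathcal F''_n}(x,x_n)\ge\epsilon/2$; and as $\mathcal F''_n$ is also a subsequence of $\mathcal F$ and a $\limsup$ dominates that of any of its subsequences, $D_{\mathcal F}(x,x_n)\ge D_{\mathcal F''_n}(x,x_n)\ge\epsilon/2$ for all $n$. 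Since $x_n\to x$, this contradicts $\mathcal F$-mean equicontinuity at $x$, which would force $D_{\mathcal F}(x,x_n)<\epsilon/2$ for large $n$; hence $(3)$ fails, completing the contrapositive.

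The step I expect to be the genuine obstacle is exactly this passage from the uncountable supremum defining $D$ to a single witnessing F{\o}lner sequence. A supremum of the continuous functions $D_{\mathcal F}$ is only lower semicontinuous in general, so continuity of $D$ cannot be deduced formally. The two ingredients that rescue the argument are the refinement turning each relevant $\limsup$ into a true limit---which is what lets the value $\ge\epsilon/2$ survive intact under the further subsequencing coming from Lemma~\ref{lem:combiningFolner}---and the lemma itself, which compresses the countably many sequences $\mathcal F^{(n)}$ (extracted along the countable sequence $x_n$) into one. Throughout I would use that a subsequence of a F{\o}lner sequence is again F{\o}lner and that \emph{common subsequence} means $\mathcal F''_n$ is simultaneously a subsequence of $\mathcal F$ and of $\mathcal F^{(n)}$, which is precisely the content of Lemma~\ref{lem:combiningFolner}.
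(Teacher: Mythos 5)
Your proposal is correct and follows essentially the same route as the paper: the pseudometric argument for $(1)\Leftrightarrow(2)$, domination $D_{\mathcal F}\le D$ for $(1)\Rightarrow(3)$, and for $(3)\Rightarrow(1)$ the same key steps of refining each witnessing F{\o}lner sequence so the $\limsup$ becomes a genuine limit and then compressing the countable family via Lemma~\ref{lem:combiningFolner}. The only cosmetic difference is that you argue by contraposition with a fixed $\epsilon/2$ where the paper runs the direct $\tfrac1n$-approximation $D_{\mathcal F}(x_n,x)\le D(x_n,x)\le D_{\mathcal F}(x_n,x)+\tfrac1n$.
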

\begin{proof}
    As above exploring that $D$ is a pseudometric yields the equivalence of 
    (\ref{enu:MEC_meanEquicontinuousPoints}) and (\ref{enu:MEC_Dcontinuous}). 
    Furthermore, from $D_\mathcal{F}\leq D$ it follows that (\ref{enu:MEC_Dcontinuous}) implies (\ref{enu:MEC_allFolner}). 
    To show that (\ref{enu:MEC_allFolner}) implies (\ref{enu:MEC_meanEquicontinuousPoints}) consider $x\in X$. 
    We need to show that $D(x,\cdot)\in C(X)$. 
    Since $X$ is metrizable it suffices to consider a sequence $(x_n)_{n\in \mathbb{N}}$ in $X$ that converges to $x$ and to show that $D(x_n,x)\to 0$. 
    For $n\in \mathbb{N}$ let $\mathcal{F}^{(n)}=(F_k^{(n)})_{k\in \mathbb{N}}$ be a F{\o}lner sequence, such that 
    $D(x_n,x)\leq \lim_{k\to \infty}(F_k^{(n)})^*d(x_n,x)+\tfrac{1}{n}$. 
    By Lemma \ref{lem:combiningFolner} there exists a F{\o}lner sequence $\mathcal{F}=(F_k)_{k\in \mathbb{N}}$ that shares a common subsequence with $\mathcal{F}^{(n)}$ for all $n\in \mathbb{N}$. 
    We thus observe that $D_\mathcal{F}(x_n,x)\leq D(x_n,x)\leq D_\mathcal{F}(x_n,x)+\tfrac{1}{n}$ holds for all $n\in \mathbb{N}$. 
    Since $(X,G)$ is $\mathcal{F}$-mean equicontinuous we thus observe that 
    $\lim_{n\to \infty}D(x_n,x)=\lim_{n\to \infty}D_\mathcal{F}(x_n,x)=0$. 
\end{proof}

\section{The interplay between the relations}
\label{sec:interplayOfRelations}
In this section we establish basic properties and study the relation between the concepts of regional mean sensitivity,
regional joint mean sensitivity, and 
weak sensitivity in the mean. 
Before starting the detailed discussion, we collect the relevant definitions for the convenience of the reader. 
For an elegant presentation of the subsequent proofs we first introduce the following. Recall that for $U\subseteq X^2$ and $(y,y')\in X^2$, we denote 
$G_U(y,y'):=\{g\in G;\, (g.y,g.y')\in U\}$ 
for the set of \emph{hitting times of $(y,y')$ to $U$}. 

\begin{definition}
\label{def:diagonallyLinkedAssociated}
    Let $(X,G)$ be an action. 
    We call a subset $U\subseteq X^2$ 
    \begin{itemize}
        \item 
            \emph{diagonally linked} 
            if there exists an asymptotically diagonal sequence 
            \linebreak 
            $(x_n,x_n')_{n\in \mathbb{N}}$ such that 
            \[\limsup_{n\to \infty} \upperBdens(G_U(x_n,x_n'))>0.\]
        \item 
            \emph{diagonally associated} 
            if there exists an asymptotically diagonal sequence $(x_n,x_n')_{n\in \mathbb{N}}$ and a F{\o}lner sequence $\mathcal{F}=(F_n)_{n\in \mathbb{N}}$ such that 
            \[\limsup_{n\to \infty}\frac{\haar{G_U(x_n,x_n')\cap F_n}}{\haar{F_n}}>0.\]
    \end{itemize}
    Whenever $\mathcal{F}=(F_n)_{n\in \mathbb{N}}$ is a F{\o}lner sequence in $G$ we call $U\subseteq X^2$
    \begin{itemize}
        \item 
            \emph{$\mathcal{F}$-diagonally linked} 
            if there exists an asymptotically diagonal sequence $(x_n,x_n')_{n\in \mathbb{N}}$ such that 
            \[\limsup_{n\to \infty}\upperAdens_\mathcal{F}(G_U(x_n,x_n'))>0.\]
        \item 
            \emph{$\mathcal{F}$-diagonally associated} 
            if there exists an asymptotically diagonal sequence $(x_n,x_n')_{n\in \mathbb{N}}$ such that 
            \[\limsup_{n\to \infty}\frac{\haar{G_U(x_n,x_n')\cap F_n}}{\haar{F_n}}>0.\]
    \end{itemize}
\end{definition}

The definitions given in the introduction can now be formulated as follows. 

\begin{definition}
\label{def:relations}
    Let $(X,G)$ be an action. 
    We call a pair $(x,x')\in X^2$
    \begin{itemize}    
    \item 
        \emph{regionally mean sensitive} 
        [$(x,x')\in \RMS(X)$]
        if any open neighbourhood $U$ of $(x,x')$ is diagonally linked. 
    
    \item 
        \emph{regionally joint mean sensitive}
        [$(x,x')\in \SM(X)$]
        if any open neighbourhood $U$ of $(x,x')$ 
        is diagonally associated. 
        
    \end{itemize}
    Furthermore, for a F{\o}lner sequence $\mathcal{F}=(F_n)_{n\in \mathbb{N}}$ we call a pair $(x,x')\in X^2$
    \begin{itemize}
    \item
        \emph{$\mathcal{F}$-regionally mean sensitive} 
        [$(x,x')\in \RMS^\mathcal{F}(X)$]
        if any open neighbourhood $U$ of $(x,x')$
        is $\mathcal{F}$-diagonally linked. 
    
    \item
        \emph{$\mathcal{F}$-regionally joint mean sensitive} 
        [$(x,x')\in \SM^\mathcal{F}(X)$]
        if any open neighbourhood $U$ of $(x,x')$ 
        is $\mathcal{F}$-diagonally associated. 
    
    \item 
        \emph{$\mathcal{F}$-weakly sensitive in the mean} 
        [$(x,x')\in \WSM^\mathcal{F}(X)$]
        if $x\neq x'$ and for any open neighbourhood $U$ of $(x,x')$ there exists $c>0$ such that for all $\epsilon>0$ there are $n\in \mathbb{N}$ and $y,y'\in X$ with $d(y,y')<\epsilon$ such that 
        ${\haar{G_U(y,y')\cap F_n}}/{\haar{F_n}}>c. $
    \end{itemize}
\end{definition}

\begin{remark}
    The reader might wonder why we are not excluding diagonal elements in the definitions of $\RMS(X)$, $\SM(X)$, $\RMS^\mathcal{F}(X)$ and $\SM^\mathcal{F}(X)$. 
    Our choice allows for the study of the maximal support, which we will present in Section \ref{sec:reflexivity} below.
\end{remark}

\begin{remark}
\label{rem:diagonallyLinkedAssociatedMonotonicity}
    Note that whenever $U$ is diagonally linked and contained in some $V$, then also $V$ is diagonally linked. 
    In particular, we observe that a pair is regionally mean sensitive if and only if it allows for a neighbourhood base of diagonally linked sets. 
    Clearly, similar statements also hold for the other notions introduced in the Definitions \ref{def:diagonallyLinkedAssociated} and \ref{def:relations}. 
\end{remark}

\subsection{The interplay of $\SM^{\mathcal{F}}$ and $\WSM^{\mathcal{F}}$}
In this subsection we show that for non-diagonal pairs the concepts of $\mathcal{F}$-regionally joint mean sensitivity and $\mathcal{F}$-weak sensitivity in the mean coincide.
For this we will need the following. 

\begin{lemma}
\label{lem:diagonallyLinkedAssociatedSubsequenceStability}
    Let $\mathcal{F}$ be a F{\o}lner sequence and $\mathcal{F}'$ be a subsequence of $\mathcal{F}$. 
    \begin{enumerate}
        \item \label{enu:DLASS_associated}
            If $U\subseteq X^2$ is $\mathcal{F}'$-diagonally associated, then it is $\mathcal{F}$-diagonally associated. 
        \item \label{enu:DLASS_linked}
            If $U\subseteq X^2$ is $\mathcal{F}'$-diagonally linked, then it is $\mathcal{F}$-diagonally linked.
    \end{enumerate}
\end{lemma}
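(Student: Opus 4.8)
The plan is to treat the two parts asymmetrically, since the property of being $\mathcal{F}$-diagonally linked depends on $\mathcal{F}$ only through the value of $\upperAdens_\mathcal{F}$ — which is itself a $\limsup$ over the Følner sets and carries no coupling to the index of the witnessing sequence — whereas being $\mathcal{F}$-diagonally associated pairs the $n$-th member of the witnessing sequence with the \emph{specific} Følner set $F_n$. Throughout, write $\mathcal{F}=(F_n)_{n\in\mathbb{N}}$ and fix a strictly increasing sequence $(n_k)_{k\in\mathbb{N}}$ so that $\mathcal{F}'=(F_{n_k})_{k\in\mathbb{N}}$.

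For part (\ref{enu:DLASS_linked}) I would simply reuse the witnessing sequence. Given an asymptotically diagonal sequence $(x_n,x_n')_{n\in\mathbb{N}}$ with $\limsup_{n}\upperAdens_{\mathcal{F}'}(G_U(x_n,x_n'))>0$, observe that for any measurable $G'\subseteq G$ the quantity $\upperAdens_{\mathcal{F}'}(G')=\limsup_k \haar{G'\cap F_{n_k}}/\haar{F_{n_k}}$ is a $\limsup$ along a subsequence of the sequence defining $\upperAdens_\mathcal{F}(G')$, hence $\upperAdens_{\mathcal{F}'}(G')\le\upperAdens_\mathcal{F}(G')$. Applying this with $G'=G_U(x_n,x_n')$ gives $\limsup_n \upperAdens_\mathcal{F}(G_U(x_n,x_n'))\ge\limsup_n\upperAdens_{\mathcal{F}'}(G_U(x_n,x_n'))>0$, so the very same sequence witnesses that $U$ is $\mathcal{F}$-diagonally linked.

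For part (\ref{enu:DLASS_associated}) the real work is a reindexing. Let $(x_k,x_k')_{k\in\mathbb{N}}$ be asymptotically diagonal with $c:=\limsup_k \haar{G_U(x_k,x_k')\cap F_{n_k}}/\haar{F_{n_k}}>0$. Fix any $z\in X$ and define a new sequence $(y_m,y_m')_{m\in\mathbb{N}}$ by setting $(y_{n_k},y_{n_k}'):=(x_k,x_k')$ for each $k$, and $(y_m,y_m'):=(z,z)$ whenever $m\notin\{n_k;\, k\in\mathbb{N}\}$. I would first check that $(y_m,y_m')$ is asymptotically diagonal: as $m\to\infty$, either $m$ is a filler index, so $d(y_m,y_m')=0$, or $m=n_k$ with $k\to\infty$, so $d(y_m,y_m')=d(x_k,x_k')\to0$. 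Then, restricting the $\limsup$ over $m$ to the subsequence $m=n_k$,
\[\limsup_{m\to\infty}\frac{\haar{G_U(y_m,y_m')\cap F_m}}{\haar{F_m}}\ge\limsup_{k\to\infty}\frac{\haar{G_U(x_k,x_k')\cap F_{n_k}}}{\haar{F_{n_k}}}=c>0,\]
which shows that $U$ is $\mathcal{F}$-diagonally associated.

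The only genuine subtlety — and the single point I would be careful about — is exactly this positional coupling in the associated notion: one cannot keep the same sequence as in part (\ref{enu:DLASS_linked}), because there the density must be measured against $F_m$ for the \emph{same} index $m$. The reindexing resolves this, and the filler pairs must be chosen on the diagonal precisely so that inserting them does not destroy the asymptotic diagonality of the witnessing sequence.
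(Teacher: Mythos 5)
Your proof is correct and follows essentially the same route as the paper: part (\ref{enu:DLASS_linked}) via the monotonicity $\upperAdens_{\mathcal{F}'}\le\upperAdens_{\mathcal{F}}$, and part (\ref{enu:DLASS_associated}) by reindexing the witnessing sequence so that its $k$-th term sits at position $n_k$ and then restricting the $\limsup$ to that subsequence. The only cosmetic difference is the choice of filler terms (you use a fixed diagonal pair $(z,z)$, the paper repeats the terms $(y_k,y_k')$ in blocks), and either choice preserves asymptotic diagonality.
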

\begin{proof}
(\ref{enu:DLASS_associated}):
    Denote $\mathcal{F}=(F_n)_{n\in \mathbb{N}}$ and $\mathcal{F}'=(F_k')_{k\in \mathbb{N}}$. 
    Let $(y_k,y_k')_{k\in \mathbb{N}}$ be an asymptotically diagonal sequence that satisfies 
    \[\limsup_{k\to \infty}\frac{\haar{G_U(y_k,y_k')\cap F_k'}}{\haar{F_k'}}>0.\]
    Furthermore, let $(n_k)_{k\in \mathbb{N}}$ be a strictly increasing sequence in $\mathbb{N}$ such that $F_{n_k}=F_k'$ for all $k\in \mathbb{N}$. 
    Denote $n_0:=0$. 
    For $n\in \mathbb{N}$ and $k\in \mathbb{N}$ with $n_{k-1}<n\leq n_k$ we denote
    $x_n:=y_{n_k}$ and 
    $x_n':=y_{n_k}'$. 
    Clearly, $(x_n,x_n')_{n\in \mathbb{N}}$ is an asymptotically diagonal sequence that satisfies $(x_{n_k},x_{n_k}')=(y_k,y_k')$ for all $k\in \mathbb{N}$. 
    Thus, we have
    \begin{align*}
        \limsup_{n\to \infty} \frac{\haar{G_U(x_n,x_n')\cap F_{n}}}{\haar{F_{n}}}
        &\geq \limsup_{k\to \infty} \frac{\haar{G_U(x_{n_k},x_{n_k}')\cap F_{n_k}}}{\haar{F_{n_k}}}\\
        &=\limsup_{k\to \infty} \frac{\haar{G_U(y_{k},y_{k}')\cap F_{k}'}}{\haar{F_{k}'}}
        >0.
    \end{align*}
    
(\ref{enu:DLASS_linked}): 
    This is immediate from the definition, since for any measurable $G'\subseteq G$ we have 
    $\upperAdens_{\mathcal{F}'}(G')\leq \upperAdens_{\mathcal{F}}(G').$
\end{proof}

Lemma \ref{lem:diagonallyLinkedAssociatedSubsequenceStability} allows the following insight. 

\begin{lemma}
\label{lem:literatureDock}
    Let $(X,G)$ be an action and $\mathcal{F}$ a F{\o}lner sequence in $G$. For any $U\subseteq X^2$ with $\Delta_X\cap \overline{U}=\emptyset$ the following statements are equivalent. 
    \begin{enumerate}
        \item \label{enu:LD_DiagonallyAssociated}
        $U$ is $\mathcal{F}$-diagonally associated. 
        \item \label{enu:LD_LiYu}
        There exists $c>0$ such that 
        for any $\epsilon>0$
        there exist $y,y'\in X$ and $n\in \mathbb{N}$ such that $d(y,y')<\epsilon$ and 
        ${\haar{G_U(y,y')\cap F_n}}/{\haar{F_n}}\geq c.$
        \item \label{enu:LD_LiYuTwo}
        There exists $c>0$ such that 
        for any $\epsilon>0$ and any $N\in \mathbb{N}$
        there exist $y,y'\in X$ and $n\geq N$ such that $d(y,y')<\epsilon$ and 
        ${\haar{G_U(y,y')\cap F_n}}/{\haar{F_n}}\geq c.$
    \end{enumerate}
\end{lemma}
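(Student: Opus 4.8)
The plan is to prove the equivalences via $(\ref{enu:LD_DiagonallyAssociated})\Rightarrow(\ref{enu:LD_LiYuTwo})\Rightarrow(\ref{enu:LD_LiYu})$ and $(\ref{enu:LD_LiYu})\Rightarrow(\ref{enu:LD_LiYuTwo})\Rightarrow(\ref{enu:LD_DiagonallyAssociated})$, where the separation hypothesis $\Delta_X\cap\overline U=\emptyset$ enters only in the step $(\ref{enu:LD_LiYu})\Rightarrow(\ref{enu:LD_LiYuTwo})$, which I expect to be the main obstacle. For $(\ref{enu:LD_DiagonallyAssociated})\Rightarrow(\ref{enu:LD_LiYuTwo})$ I would take an asymptotically diagonal witness $(x_n,x_n')_{n\in\mathbb{N}}$ with $\limsup_{n}\haar{G_U(x_n,x_n')\cap F_n}/\haar{F_n}=:2c>0$ and pass to a subsequence of indices along which this quantity stays at least $c$. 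Along such a subsequence the indices tend to infinity while $d(x_n,x_n')\to 0$, so given any $\epsilon>0$ and $N\in\mathbb{N}$ a sufficiently advanced index $n$ provides $y:=x_n$, $y':=x_n'$ with $d(y,y')<\epsilon$, $n\geq N$ and density at least $c$. The implication $(\ref{enu:LD_LiYuTwo})\Rightarrow(\ref{enu:LD_LiYu})$ holds trivially by specialising to $N=1$.

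The crux is $(\ref{enu:LD_LiYu})\Rightarrow(\ref{enu:LD_LiYuTwo})$, where the separation from the diagonal is used to prevent the witnessing indices of $(\ref{enu:LD_LiYu})$ from being confined to a bounded range. Fix $\epsilon>0$ and $N\in\mathbb{N}$ and let $F$ be the (compact) union of the finitely many sets $F_n$ with $n<N$. Consider the compact set $A\subseteq X^2$ obtained by projecting $\{(g,y,y')\in F\times X^2;\,(g.y,g.y')\in\overline U\}$ to its $X^2$-coordinate; this is compact since it is the image under a continuous map of the preimage of the closed set $\overline U$ inside the compact set $F\times X^2$. By the hypothesis $\Delta_X\cap\overline U=\emptyset$ the set $A$ is disjoint from $\Delta_X$, and since both are compact they lie at positive distance, which furnishes $\delta'>0$ such that every $(y,y')$ with $d(y,y')<\delta'$ lies outside $A$, i.e.\ satisfies $(g.y,g.y')\notin U$ for all $g\in F$, so that $G_U(y,y')\cap F_n=\emptyset$ for all $n<N$. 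Applying $(\ref{enu:LD_LiYu})$ with the parameter $\min(\epsilon,\delta')$ in place of $\epsilon$ yields $y,y'$ and some $n$ with $d(y,y')<\delta'$ and $\haar{G_U(y,y')\cap F_n}/\haar{F_n}\geq c$; as this density vanishes whenever $n<N$, the index must satisfy $n\geq N$, giving $(\ref{enu:LD_LiYuTwo})$ with the same constant $c$.

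For the remaining step $(\ref{enu:LD_LiYuTwo})\Rightarrow(\ref{enu:LD_DiagonallyAssociated})$ I would, for each $k\in\mathbb{N}$, apply $(\ref{enu:LD_LiYuTwo})$ with $\epsilon=1/k$ and $N=k$ to obtain $y_k,y_k'\in X$ and $n_k\geq k$ with $d(y_k,y_k')<1/k$ and $\haar{G_U(y_k,y_k')\cap F_{n_k}}/\haar{F_{n_k}}\geq c$. After passing to a further subsequence I may assume $(n_k)_{k\in\mathbb{N}}$ strictly increasing, so that $\mathcal{F}':=(F_{n_k})_{k\in\mathbb{N}}$ is a subsequence of $\mathcal{F}$. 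The asymptotically diagonal sequence $(y_k,y_k')_{k\in\mathbb{N}}$ then witnesses that $U$ is $\mathcal{F}'$-diagonally associated, and Lemma~\ref{lem:diagonallyLinkedAssociatedSubsequenceStability}\,(\ref{enu:DLASS_associated}) upgrades this to $U$ being $\mathcal{F}$-diagonally associated, which is exactly $(\ref{enu:LD_DiagonallyAssociated})$. The only delicate point throughout is the compactness-and-separation argument of the second paragraph; the rest is a matter of aligning indices and invoking the subsequence stability already established.
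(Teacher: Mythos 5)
Your proposal is correct and follows essentially the same route as the paper: the trivial implications, the subsequence-stability lemma for $(\ref{enu:LD_LiYuTwo})\Rightarrow(\ref{enu:LD_DiagonallyAssociated})$, and a compactness argument exploiting $\Delta_X\cap\overline{U}=\emptyset$ to force the witnessing indices beyond any given $N$. The only cosmetic differences are that you argue $(\ref{enu:LD_LiYu})\Rightarrow(\ref{enu:LD_LiYuTwo})$ directly via the positive distance between the compact set $A$ and $\Delta_X$, whereas the paper argues contrapositively via uniform continuity of the action on $F\times X$, and you close the cycle through $(\ref{enu:LD_DiagonallyAssociated})\Rightarrow(\ref{enu:LD_LiYuTwo})$ rather than $(\ref{enu:LD_DiagonallyAssociated})\Rightarrow(\ref{enu:LD_LiYu})$.
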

\begin{remark}
    The argument for the equivalence of (\ref{enu:LD_LiYu}) and (\ref{enu:LD_LiYuTwo}) is essentially contained in \cite[Lemma 5.2]{li2021mean}, where it is presented for the standard F{\o}lner sequence in $\mathbb{Z}$. 
    We include the short proof for the convenience of the reader. 
\end{remark}
\begin{proof}
(\ref{enu:LD_DiagonallyAssociated})$\Rightarrow$(\ref{enu:LD_LiYu}):
    Trivial. 

(\ref{enu:LD_LiYu})$\Rightarrow$(\ref{enu:LD_LiYuTwo}):
    Let $c>0$. 
    If (\ref{enu:LD_LiYuTwo}) does not hold then there exist $N_c\in \mathbb{N}$ and $\epsilon_c>0$ such that 
    for all $(y,y')\in X^2$ with $d(y,y')<\epsilon_c$ and all $n\geq N_c$ we have 
    \[\frac{\haar{G_U(y,y')\cap F_n}}{\haar{F_n}}<c.\]
    Note that $F:=\bigcup_{n=1}^{N_c} F_n$ is compact and hence that 
    $F\times X\to X, (g,x)\mapsto g.x$
    is uniformly continuous. 
    Since $\overline{U}\cap \Delta_X=\emptyset$ we find $\epsilon>0$ such that for all $(y,y')\in X^2$ with $d(y,y')<\epsilon$ we have $(g.y,g.y')\notin U$ for all $g\in F$. 
    Considering $\epsilon_c':=\min\{\epsilon_c,\epsilon\}$ we observe that for $(y,y')\in X^2$ with $d(y,y')<\epsilon_c'$ and all $n\in \mathbb{N}$ we have 
    \[\frac{\haar{G_U(y,y')\cap F_n}}{\haar{F_n}}< c.\]
    Thus (\ref{enu:LD_LiYu}) is not satisfied. 

(\ref{enu:LD_LiYuTwo})$\Rightarrow$(\ref{enu:LD_DiagonallyAssociated}):
    By (\ref{enu:LD_LiYuTwo}) there exists an asymptotically diagonal sequence $(x_k,x_k')_{k\in \mathbb{N}}$ and a strictly increasing sequence $(n_k)_{k\in \mathbb{N}}$ in $\mathbb{N}$ such that 
    \[\inf_{k\in \mathbb{N}}\frac{\haar{G_U(x_k,x_k')\cap F_{n_{k}}}}{\haar{F_{n_{k}}}}\geq c.\]
    Denoting $\mathcal{F}':=(F_{n_k})_{k\in \mathbb{N}}$ we have found a subsequence of $\mathcal{F}$ such that $U$ is $\mathcal{F}'$-diagonally associated. 
    It follows from Lemma \ref{lem:diagonallyLinkedAssociatedSubsequenceStability} that $U$ is $\mathcal{F}$-diagonally associated. 
\end{proof}

We are now prepared to show the claimed statement about regional joint mean sensitivity and weak sensitivity in the mean. 

\begin{proposition}
\label{pro:interplayweaksitmAndregionallysitm}
    Let $(X,G)$ be an action and $\mathcal{F}$ be a F{\o}lner sequence in $G$. 
    We have $\SM^\mathcal{F}(X)\setminus \Delta_X=\WSM^\mathcal{F}(X)$. 
    In particular, we have $\icerhull{\SM^\mathcal{F}(X)}=\icerhull{\WSM^\mathcal{F}(X)}$. 
\end{proposition}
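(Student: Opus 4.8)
The plan is to prove the set equality $\SM^\mathcal{F}(X)\setminus\Delta_X=\WSM^\mathcal{F}(X)$ by establishing both inclusions, using Lemma \ref{lem:literatureDock} as the main engine, and then to deduce the statement about $\icerhull{\cdot}$ as a formal consequence. The central observation is that Lemma \ref{lem:literatureDock} translates the geometric condition ``$U$ is $\mathcal{F}$-diagonally associated'' into exactly the quantifier pattern appearing in the definition of $\WSM^\mathcal{F}$, but only for sets $U$ whose closure avoids the diagonal. Since every pair in $\WSM^\mathcal{F}(X)$ is by definition non-diagonal, and the membership conditions for both $\SM^\mathcal{F}$ and $\WSM^\mathcal{F}$ may---via Remark \ref{rem:diagonallyLinkedAssociatedMonotonicity}---be checked on any neighbourhood base of the pair, for a non-diagonal pair $(x,x')$ I would restrict attention throughout to a neighbourhood base $\{U\}$ with $\overline{U}\cap\Delta_X=\emptyset$, which exists precisely because $x\neq x'$.

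For the inclusion $\SM^\mathcal{F}(X)\setminus\Delta_X\subseteq\WSM^\mathcal{F}(X)$, I would take $(x,x')\in\SM^\mathcal{F}(X)$ with $x\neq x'$ and an arbitrary open neighbourhood $U$ of $(x,x')$; to verify the $\WSM^\mathcal{F}$ condition for $U$, I would pass to a smaller open neighbourhood $V\subseteq U$ with $\overline{V}\cap\Delta_X=\emptyset$. Membership in $\SM^\mathcal{F}(X)$ makes $V$ $\mathcal{F}$-diagonally associated, so Lemma \ref{lem:literatureDock}, applied to $V$, yields a constant $c>0$ with the required hitting-time estimate for $V$; since $V\subseteq U$ gives $G_V(y,y')\subseteq G_U(y,y')$, the same $c$ works for $U$. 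The reverse inclusion is the symmetric argument: given $(x,x')\in\WSM^\mathcal{F}(X)$ (hence $x\neq x'$) and an arbitrary open neighbourhood $U$, I would choose $V\subseteq U$ with $\overline{V}\cap\Delta_X=\emptyset$, invoke the $\WSM^\mathcal{F}$ condition for the neighbourhood $V$, apply Lemma \ref{lem:literatureDock} to conclude that $V$ is $\mathcal{F}$-diagonally associated, and finally use the monotonicity of Remark \ref{rem:diagonallyLinkedAssociatedMonotonicity} to transfer this to $U$.

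Two minor bookkeeping points would need attention. First, the discrepancy between the strict inequality $>c$ in the definition of $\WSM^\mathcal{F}$ and the non-strict $\geq c$ in Lemma \ref{lem:literatureDock}(\ref{enu:LD_LiYu}) is harmless, since the constant $c$ is existentially quantified and one may pass from $\geq c$ to $>c/2$. Second, the existence of the shrunk neighbourhood $V$ uses only that $X^2$ is a compact metric space and $(x,x')\notin\Delta_X$. For the ``in particular'' claim, I would note that $\WSM^\mathcal{F}(X)=\SM^\mathcal{F}(X)\setminus\Delta_X\subseteq\SM^\mathcal{F}(X)$ gives one containment of the generated relations, while for the reverse I would use that every closed invariant equivalence relation---in particular $\icerhull{\WSM^\mathcal{F}(X)}$---contains $\Delta_X$, so that $\SM^\mathcal{F}(X)\subseteq\WSM^\mathcal{F}(X)\cup\Delta_X\subseteq\icerhull{\WSM^\mathcal{F}(X)}$, whence $\icerhull{\SM^\mathcal{F}(X)}\subseteq\icerhull{\WSM^\mathcal{F}(X)}$.

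I expect essentially no serious analytic obstacle, as Lemma \ref{lem:literatureDock} already isolates the only nontrivial content (the passage between the ``for some $n$'' and ``for infinitely many $n$'' forms, and thence to diagonal association). The single point requiring genuine care is organisational: ensuring that the hypothesis $\overline{U}\cap\Delta_X=\emptyset$ of Lemma \ref{lem:literatureDock} is always available, which is exactly what the restriction to non-diagonal pairs and the monotonicity of Remark \ref{rem:diagonallyLinkedAssociatedMonotonicity} provide.
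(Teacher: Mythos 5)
Your proposal is correct and follows essentially the same route as the paper's own proof, which likewise reduces the statement to Lemma \ref{lem:literatureDock} combined with the monotonicity observation of Remark \ref{rem:diagonallyLinkedAssociatedMonotonicity}; you have simply written out the details (the shrinking to a neighbourhood $V$ with $\overline{V}\cap\Delta_X=\emptyset$, the strict-versus-nonstrict constant, and the $\icerhull{\cdot}$ bookkeeping) that the paper leaves implicit. No gaps.
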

\begin{proof}%[Proof of Proposition \ref{pro:interplayweaksitmAndregionallysitm}:]
    Let $(x,x')\in X^2$. 
    Recall that $x\neq x'$ is part of the definition of $\WSM^\mathcal{F}$. We thus assume, w.l.o.g.\ that $x\neq x'$. 
    Combining the observation from Remark \ref{rem:diagonallyLinkedAssociatedMonotonicity} with Lemma \ref{lem:literatureDock} it is then straightforward to observe that $(x,x')\in \SM^\mathcal{F}(X)$ if and only if $(x,x')\in \WSM^\mathcal{F}(X)$. 
\end{proof}

At this point it is natural to ask whether removing the assumption $x\neq x'$ from the definition of $\mathcal{F}$-weak sensitivity in the mean yields that $\WSM^\mathcal{F}(X)$ and $\SM^\mathcal{F}(X)$ coincide also on the diagonal. The following example illustrates that this is not the case. 

\begin{example}
\label{exa:literatureDock}
    Let $X=\mathbb{Z}\cup \{\infty\}$ be the one point compactification of $\mathbb{Z}$. We act with $G=\mathbb{Z}$ on $X$ by fixing $\infty$ and mapping $g.x:=g+x$ for $g,x\in \mathbb{Z}$. 
    Furthermore, we consider the standard F{\o}lner sequence $\mathcal{F}=(F_n)_{n\in \mathbb{N}}$ given by 
    $F_n:=\{0,\dots,n-1\}$. 

    To show that $\SM^\mathcal{F}(X)\cap \Delta_X=\{(\infty,\infty)\}$ first consider $x\in X\setminus \{\infty\}$ and 
    note that $x$ is an isolated point of $X$. 
    Thus $U:=\{(x,x)\}$ is an open neighbourhood of $(x,x)$ in $X^2$. 
    Let $(x_n,x_n')_{n\in \mathbb{N}}$ be an asymptotically diagonal sequence in $X^2$. 
    For $n\in \mathbb{N}$ we observe that 
    $\haar{G_U(x_n,x_n')}\leq 1$ and hence that 
    $\limsup_{n\to \infty}{\haar{G_U(x_n,x_n')\cap F_n}}/{\haar{F_n}}=0$. 
    Thus $U$ is not $\mathcal{F}$-diagonally associated and hence $(x,x)\notin \SM^\mathcal{F}(X)$. 
    Furthermore, for $x=\infty$ we know that $x$ is fixed and hence that $G_U(x,x)=G$ for all open neighbourhoods $U$ of $(x,x)$. 
    Thus any open neighbourhood of $(x,x)$ is diagonally associated. 
    This shows that 
    \[\SM^\mathcal{F}(X)\cap \Delta_X=\{(\infty,\infty)\}.\] 

    However, for any $x\in X$ and an open neighbourhood $U$ of $(x,x)$ we observe that for any $\epsilon>0$ we have 
    \[\frac{\haar{G_U(x,x)\cap F_1}}{\haar{F_1}}=\frac{\haar{\{0\}}}{\haar{\{0\}}}=1>0.\]
    Thus, changing the definition of $\mathcal{F}$-weak sensitivity in the mean by removing the condition `$x\neq x'$' would yield that all diagonal pairs are $\mathcal{F}$-weakly sensitive in the mean for this example. 
\end{example}

\subsection{Basic properties of $\RMS^{(\mathcal{F})}$ and $\SM^{(\mathcal{F})}$}

We next collect some basic properties of these relations, which will become important in the subsequent discussion.  
Recall that we denote $\RP(X)$ for the regional proximal relation. 

\begin{proposition}
\label{pro:basicPropertiesRMSandSM}
    Let $(X,G)$ be an action and $\mathcal{F}$ be a F{\o}lner sequence.  
    \begin{enumerate}
        \item \label{enu:BPRMSSM_inclusion}
            $\RMS^\mathcal{F}(X)\subseteq \RMS(X)\subseteq \RP(X)$
            and 
            $\SM^\mathcal{F}(X)\subseteq \SM(X)\subseteq  \RP(X)$. 
        \item \label{enu:BPRMSSM_monotonicity}
            Whenever $\mathcal{F}'$ is a subsequence of $\mathcal{F}$, then 
            \[\RMS^{\mathcal{F}'}(X)\subseteq\RMS^\mathcal{F}(X)
            \hspace{1cm}\text{and}\hspace{1cm}
            \SM^{\mathcal{F}'}(X) \subseteq \SM^\mathcal{F}(X).\]
        \item 
            $\RMS^\mathcal{F}(X)$ and $\SM^\mathcal{F}(X)$ are closed and symmetric  relations.
        \item 
            $\RMS(X)$ and $\SM(X)$ are closed, invariant and symmetric relations.
    \end{enumerate}
\end{proposition}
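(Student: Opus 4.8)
The plan is to reduce each assertion to the membership criterion that a pair lies in one of these relations precisely when every open neighbourhood of it is (of the appropriate type) diagonally linked or associated (cf. Remark~\ref{rem:diagonallyLinkedAssociatedMonotonicity}), and then to track how the hitting-time sets $G_U(y,y')$ and the relevant densities behave under the various operations.

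For part (\ref{enu:BPRMSSM_inclusion}), the inclusion $\RMS^\mathcal{F}(X)\subseteq\RMS(X)$ follows because $\upperAdens_\mathcal{F}\le\upperBdens$ (Lemma~\ref{lem:upperBanachDensityCharacterisation}), so every $\mathcal{F}$-diagonally linked set is diagonally linked; the inclusion $\SM^\mathcal{F}(X)\subseteq\SM(X)$ is immediate, taking $\mathcal{F}$ itself as the witnessing F{\o}lner sequence. For $\RMS(X)\subseteq\RP(X)$ and $\SM(X)\subseteq\RP(X)$ I would argue uniformly: if $U$ is diagonally linked or diagonally associated, then $\upperBdens(G_U(x_n,x_n'))>0$ (resp.\ $\haar{G_U(x_n,x_n')\cap F_n}>0$) for infinitely many $n$ along the witnessing sequence, which forces $G_U(x_n,x_n')$ to have positive Haar measure and hence to be non-empty. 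Choosing such an $n$ with $d(x_n,x_n')<\epsilon$ and $g\in G_U(x_n,x_n')$ and setting $(y,y'):=(g.x_n,g.x_n')$ verifies the original definition of the regionally proximal relation. Part (\ref{enu:BPRMSSM_monotonicity}) is a direct consequence of Lemma~\ref{lem:diagonallyLinkedAssociatedSubsequenceStability}: if every neighbourhood is $\mathcal{F}'$-diagonally linked (resp.\ associated), it is $\mathcal{F}$-diagonally linked (resp.\ associated).

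Closedness (parts (3) and (4)) admits a single argument for all four relations. If $(x_j,x_j')\to(x,x')$ with all $(x_j,x_j')$ in the relation and $U$ is any open neighbourhood of $(x,x')$, then $(x_j,x_j')\in U$ for large $j$, so $U$ is also an open neighbourhood of a point of the relation and is therefore of the required diagonally linked/associated type; hence $(x,x')$ lies in the relation. Symmetry follows from the identity $G_{U^\top}(y,y')=G_U(y',y)$, where $U^\top:=\{(b,a);\,(a,b)\in U\}$: flipping a neighbourhood $U$ of $(x',x)$ to the neighbourhood $U^\top$ of $(x,x')$ and replacing the witnessing sequence $(x_n,x_n')$ by $(x_n',x_n)$ (still asymptotically diagonal) transfers the relevant density estimate unchanged.

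The main work is the invariance in part (4). Fix $(x,x')\in\RMS(X)$ and $h\in G$; I would show $(h.x,h.x')\in\RMS(X)$. For an open neighbourhood $V$ of $(h.x,h.x')$ the set $h^{-1}.V$ is an open neighbourhood of $(x,x')$, so it is diagonally linked via some asymptotically diagonal $(x_n,x_n')$. A direct computation gives $G_{h^{-1}.V}(x_n,x_n')=h^{-1}G_V(x_n,x_n')$, and the key point is that $\upperBdens$ is left-invariant, $\upperBdens(h^{-1}G')=\upperBdens(G')$, which follows from the left-invariance of the Haar measure. Hence $\limsup_n\upperBdens(G_V(x_n,x_n'))>0$ with the same sequence, so $V$ is diagonally linked and $(h.x,h.x')\in\RMS(X)$. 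For $\SM(X)$ the same computation applies, but now one must additionally verify that $(h.F_n')_{n}$ is again a F{\o}lner sequence (using that $h^{-1}Kh$ is compact and that left-translation preserves Haar measure) and use $\haar{h^{-1}G'\cap F}=\haar{G'\cap hF}$ to rewrite the associated density along $(h.F_n')$. I expect this invariance step to be the principal obstacle, precisely because it hinges on left-invariance of the Banach density and on F{\o}lner-ness being preserved under left translation; these are the very features that fail for a fixed $\mathcal{F}$ (translation sends $\mathcal{F}$ to $h\mathcal{F}$, generally not a subsequence of $\mathcal{F}$), which is why invariance is asserted only for $\RMS(X)$ and $\SM(X)$ and not for their $\mathcal{F}$-counterparts.
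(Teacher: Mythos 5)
Your proposal is correct and follows the same route as the paper, which simply cites Lemma~\ref{lem:diagonallyLinkedAssociatedSubsequenceStability} for part (\ref{enu:BPRMSSM_monotonicity}) and leaves the remaining claims as routine consequences of the definitions; your filled-in details (the comparison $\upperAdens_\mathcal{F}\leq\upperBdens$, non-emptiness of positive-density hitting sets for the inclusion into $\RP(X)$ via $g^{-1}$, the identity $G_{h^{-1}.V}(x_n,x_n')=h^{-1}G_V(x_n,x_n')$ together with left-invariance of $\upperBdens$ and translation-stability of F{\o}lner sequences) are exactly the intended ones. Your closing observation about why invariance is not claimed for the $\mathcal{F}$-relations is also consistent with the paper.
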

\begin{proof}
    Note that (\ref{enu:BPRMSSM_monotonicity}) follows from Lemma \ref{lem:diagonallyLinkedAssociatedSubsequenceStability}. 
    The remaining statements can easily be deduced from the definitions. 
\end{proof}

\begin{remark}
    We will see in Section~\ref{sec:reflexivity} that the relations discussed in Proposition \ref{pro:basicPropertiesRMSandSM} (except $\RP(X)$) are reflexive if and only if $X$ is fully supported. Furthermore, as presented in Example~\ref{exa:twoPointCompactificationThreeCopies} below, none of these relations needs to be transitive. 
\end{remark}

\begin{remark}
    The inclusions in (\ref{enu:BPRMSSM_inclusion}) and (\ref{enu:BPRMSSM_monotonicity}) can be strict as illustrated by Example~\ref{exa:twoPointCompactificationThreeCopies} below.
\end{remark}

With a similar argument as in the proof of (\ref{enu:LD_LiYuTwo})$\Rightarrow$(\ref{enu:LD_DiagonallyAssociated}) in Lemma \ref{lem:literatureDock} it follows that the limit superior in the definition of regional (joint) mean sensitivity can be replaced by an infimum.
We present the details of this observation in the following proposition. 

\begin{proposition}
\label{pro:infimumReformulation}
    Let $(X,G)$ be an action and $U\subseteq X^2$. 
    \begin{enumerate}
    \item \label{enu_IR_diagonallyLinked}
        $U$ is diagonally linked
            if and only if 
        there exists an asymptotically diagonal sequence $(x_n,x_n')_{n\in \mathbb{N}}$ that satisfies
        $\inf_{n\in \mathbb{N}} \upperBdens(G_U(x_n,x_n')) > 0.$
    \item \label{enu_IR_diagonallyAssociated}
        $U$ is diagonally associated 
        if and only if 
        there exists an asymptotically diagonal sequence $(x_n,x_n')_{n\in \mathbb{N}}$ and a F{\o}lner sequence $(F_n)_{n\in \mathbb{N}}$ such that 
        $\inf_{n\in \mathbb{N}} {\haar{G_U(x_n,x_n')\cap F_n}}/{\haar{F_n}}>0.$
    \end{enumerate}
Let $\mathcal{F}$ be a F{\o}lner sequence in $G$. 
    \begin{enumerate}
    \setcounter{enumi}{2}
    \item \label{enu_IR_diagonallyLinked_F}
        $U$ is $\mathcal{F}$-diagonally linked
            if and only if 
        there exists an asymptotically diagonal sequence $(x_n,x_n')_{n\in \mathbb{N}}$ that satisfies
        $\inf_{n\in \mathbb{N}}
        \upperAdens_\mathcal{F}(G_U(x_n,x_n')) > 0.$
    \item \label{enu_IR_diagonallyAssociated_F}
        $U$ is $\mathcal{F}$-diagonally associated
            if and only if 
        there exists an asymptotically diagonal sequence $(x_n,x_n')_{n\in \mathbb{N}}$ and a subsequence $(F_n')_{n\in \mathbb{N}}$ of $\mathcal{F}$ such that 
        $\inf_{n\in \mathbb{N}} {\haar{G_U(x_n,x_n')\cap F_n'}}/{\haar{F_n'}}>0.$
    \end{enumerate}
\end{proposition}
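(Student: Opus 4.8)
The plan is to prove all four equivalences by the same mechanism, since each opposes a ``$\limsup > 0$'' condition against an ``$\inf > 0$'' condition along possibly refined data. The ``if'' directions are essentially immediate: for any real sequence $(a_n)_{n\in\mathbb{N}}$ one has $\inf_n a_n \le \limsup_n a_n$, so the very asymptotically diagonal sequence (together with the accompanying Følner sequence in (\ref{enu_IR_diagonallyAssociated}), or the same data alone in (\ref{enu_IR_diagonallyLinked}) and (\ref{enu_IR_diagonallyLinked_F})) that witnesses a positive infimum also witnesses a positive limit superior, giving the defining property directly. The only case needing an extra word is (\ref{enu_IR_diagonallyAssociated_F}): there the infimum is taken along a subsequence $(F_n')_{n\in\mathbb{N}}$ of $\mathcal{F}$, so a positive infimum first exhibits $U$ as $\mathcal{F}'$-diagonally associated for $\mathcal{F}' := (F_n')_{n\in\mathbb{N}}$, and Lemma \ref{lem:diagonallyLinkedAssociatedSubsequenceStability}(\ref{enu:DLASS_associated}) then upgrades this to $\mathcal{F}$-diagonal association.

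For the ``only if'' directions I would isolate the single underlying extraction. In each part write $a_n$ for the relevant quantity, namely $\upperBdens(G_U(x_n,x_n'))$ in (\ref{enu_IR_diagonallyLinked}), the ratio $\haar{G_U(x_n,x_n')\cap F_n}/\haar{F_n}$ in (\ref{enu_IR_diagonallyAssociated}) and (\ref{enu_IR_diagonallyAssociated_F}), and $\upperAdens_\mathcal{F}(G_U(x_n,x_n'))$ in (\ref{enu_IR_diagonallyLinked_F}). By hypothesis $\limsup_n a_n = c$ for some $c > 0$, so I would pass to a subsequence $(n_k)_{k\in\mathbb{N}}$ with $a_{n_k} \to c$ and then discard finitely many initial indices to arrange $a_{n_k} \ge c/2$ for all $k$. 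Relabelling $(x_{n_k}, x_{n_k}')_{k\in\mathbb{N}}$ as the new asymptotically diagonal sequence then yields the required strictly positive infimum.

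Three routine stability facts legitimise this relabelling, and I would record them explicitly. First, a subsequence of an asymptotically diagonal sequence is again asymptotically diagonal, since $d(x_{n_k}, x_{n_k}') \to 0$. Second, for parts (\ref{enu_IR_diagonallyAssociated}) and (\ref{enu_IR_diagonallyAssociated_F}) the Følner sequence must be carried along: setting $F_k' := F_{n_k}$, a subsequence of a Følner sequence is again Følner, as $\haar{KF_n \Delta F_n}/\haar{F_n} \to 0$ passes to every subsequence; in (\ref{enu_IR_diagonallyAssociated_F}) the resulting $(F_k')_{k\in\mathbb{N}}$ is by construction a subsequence of $\mathcal{F}$, exactly as the statement demands. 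Third, the value $a_n$ is intrinsic to the pair $(x_n,x_n')$ (for the Banach and the $\mathcal{F}$-asymptotic densities) or to that pair together with its chosen Følner set, so reindexing leaves it unchanged.

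I do not anticipate a genuine obstacle; the substance is the subsequence extraction above. The two points meriting care are the inheritance of the Følner property under passing to subsequences in (\ref{enu_IR_diagonallyAssociated}) and (\ref{enu_IR_diagonallyAssociated_F}), and the mild asymmetry in (\ref{enu_IR_diagonallyAssociated_F}) between the full sequence $\mathcal{F}$ appearing in the definition of $\mathcal{F}$-diagonal association and the subsequence of $\mathcal{F}$ permitted in the infimum reformulation. The latter is precisely what Lemma \ref{lem:diagonallyLinkedAssociatedSubsequenceStability}(\ref{enu:DLASS_associated}) is designed to absorb, so I would lean on it rather than redo the argument.
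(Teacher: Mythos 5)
Your proposal is correct and follows essentially the route the paper intends: the paper omits a written proof and instead points to the argument for (\ref{enu:LD_LiYuTwo})$\Rightarrow$(\ref{enu:LD_DiagonallyAssociated}) in Lemma \ref{lem:literatureDock}, which is exactly your subsequence extraction combined with Lemma \ref{lem:diagonallyLinkedAssociatedSubsequenceStability} to absorb the passage from $\mathcal{F}'$ back to $\mathcal{F}$ in part (\ref{enu_IR_diagonallyAssociated_F}). Your explicit remarks on the stability of asymptotic diagonality and the F{\o}lner property under subsequences are the right points to record, and nothing further is needed.
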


\subsection{The interplay of $\RMS^{(\mathcal{F})}$ and $\SM^{(\mathcal{F})}$}
In this subsection we provide details on the relationship between $(\mathcal{F})$-regional mean sensitivity and $(\mathcal{F}-)$regional joint mean sensitivity. 
We summarize our results in the following theorem, and provide the proof after a short discussion. 

\begin{theorem}
\label{the:RMSInSMandFolnerRealisation}
    Let $(X,G)$ be an action. 
    \begin{enumerate}
        \item \label{enu:RMSISMAFR_inclusion_F}
            $\RMS^\mathcal{F}(X)\subseteq \SM^\mathcal{F}(X)$ holds for any F{\o}lner sequence $\mathcal{F}$. 
       
        \item \label{enu:RMSISMAFR_realization}
            There exists a F{\o}lner sequence $\mathcal{F}$ such that 
            \[
            \RMS(X)= \RMS^\mathcal{F}(X)
            \hspace{1cm}\text{and}\hspace{1cm}
            \SM(X)=\SM^\mathcal{F}(X) 
            .\]
            
        \item \label{enu:RMSISMAFR_inclusion}
            $\RMS(X)\subseteq \SM(X)$. 
    \end{enumerate}
\end{theorem}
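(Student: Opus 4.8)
The plan is to prove (\ref{enu:RMSISMAFR_inclusion_F}) and (\ref{enu:RMSISMAFR_realization}) independently and then read off (\ref{enu:RMSISMAFR_inclusion}) as an immediate consequence. For (\ref{enu:RMSISMAFR_inclusion_F}) it suffices to show that every $\mathcal{F}$-diagonally linked set $U$ is $\mathcal{F}$-diagonally associated. Given an asymptotically diagonal sequence $(y_k,y_k')$ witnessing $\limsup_k \upperAdens_\mathcal{F}(G_U(y_k,y_k'))\geq c>0$, I would pass to a subsequence so that $\upperAdens_\mathcal{F}(G_U(y_k,y_k'))\geq c$ for every $k$ and then unfold each upper asymptotic density: since $\upperAdens_\mathcal{F}(G_U(y_k,y_k'))=\limsup_m \haar{G_U(y_k,y_k')\cap F_m}/\haar{F_m}$, for each $k$ there are arbitrarily large $m$ with $\haar{G_U(y_k,y_k')\cap F_m}/\haar{F_m}>c/2$. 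Choosing a strictly increasing sequence $(m_k)$ realizing this and setting $(x_n,x_n'):=(y_k,y_k')$ for $m_{k-1}<n\leq m_k$ produces an asymptotically diagonal sequence with $\haar{G_U(x_{m_k},x_{m_k}')\cap F_{m_k}}/\haar{F_{m_k}}>c/2$, so $U$ is $\mathcal{F}$-diagonally associated. This is precisely the diagonalization already used for (\ref{enu:LD_LiYuTwo})$\Rightarrow$(\ref{enu:LD_DiagonallyAssociated}) in Lemma \ref{lem:literatureDock}.

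For (\ref{enu:RMSISMAFR_realization}) I would fix a countable base $\{V_i\}_{i\in\mathbb{N}}$ for the topology of $X^2$ and, using Remark \ref{rem:diagonallyLinkedAssociatedMonotonicity}, reduce the two equalities to producing a single F{\o}lner sequence $\mathcal{F}$ that simultaneously witnesses $\mathcal{F}$-diagonal linkedness and $\mathcal{F}$-diagonal associatedness of every basic set enjoying these properties for \emph{some} F{\o}lner sequence. Concretely, for each $i$ with $V_i$ diagonally associated I invoke Proposition \ref{pro:infimumReformulation}(\ref{enu_IR_diagonallyAssociated}) to obtain an asymptotically diagonal sequence (depending on $i$) and a F{\o}lner sequence $\mathcal{G}^{(i)}=(G_n^{(i)})$ with $\inf_n \haar{G_{V_i}(x_n,x_n')\cap G_n^{(i)}}/\haar{G_n^{(i)}}>0$; and for each $i$ with $V_i$ diagonally linked I use Proposition \ref{pro:infimumReformulation}(\ref{enu_IR_diagonallyLinked}) to get an asymptotically diagonal sequence (again depending on $i$) with $c_i:=\inf_n \upperBdens(G_{V_i}(x_n,x_n'))>0$. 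All of these families are countable, so Lemma \ref{lem:combiningFolner} furnishes a single F{\o}lner sequence $\mathcal{F}$ sharing a common subsequence with each of them.

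The main obstacle is transferring the densities to this common $\mathcal{F}$. For the associated sets the transfer is direct: a common subsequence of $\mathcal{F}$ and $\mathcal{G}^{(i)}$ is a subsequence of $\mathcal{G}^{(i)}$ along which the matched-index quotients remain $\geq$ their infimum, so Proposition \ref{pro:infimumReformulation}(\ref{enu_IR_diagonallyAssociated_F}) makes $V_i$ $\mathcal{F}$-diagonally associated. For the linked sets the difficulty is that the upper Banach density of each $G_{V_i}(x_n,x_n')$ may be realized by a \emph{different} F{\o}lner sequence, and passing to a common subsequence can only decrease an upper asymptotic density, threatening the lower bound $c_i$. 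I resolve this by refining Lemma \ref{lem:upperBanachDensityCharacterisation}: for each relevant pair $(i,n)$ I first take a F{\o}lner sequence realizing $\upperBdens(G_{V_i}(x_n,x_n'))$ and then pass to a further subsequence along which this quotient \emph{converges} to the Banach density. Since the limit now genuinely exists, every subsequence — in particular the common subsequence with $\mathcal{F}$ — retains it, yielding $\upperAdens_\mathcal{F}(G_{V_i}(x_n,x_n'))\geq c_i$ for all $n$ and hence, by Proposition \ref{pro:infimumReformulation}(\ref{enu_IR_diagonallyLinked_F}), that $V_i$ is $\mathcal{F}$-diagonally linked. Feeding these $(i,n)$-indexed limit-realizing sequences into the countable family handed to Lemma \ref{lem:combiningFolner} is exactly what makes the argument close.

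With $\mathcal{F}$ fixed, every $(x,x')\in\RMS(X)$ has a neighbourhood base of basic sets $V_i$ that are diagonally linked, hence $\mathcal{F}$-diagonally linked, so $(x,x')\in\RMS^\mathcal{F}(X)$; together with $\RMS^\mathcal{F}(X)\subseteq\RMS(X)$ from Proposition \ref{pro:basicPropertiesRMSandSM}(\ref{enu:BPRMSSM_inclusion}) this gives $\RMS(X)=\RMS^\mathcal{F}(X)$, and the parallel argument gives $\SM(X)=\SM^\mathcal{F}(X)$, establishing (\ref{enu:RMSISMAFR_realization}). Finally, (\ref{enu:RMSISMAFR_inclusion}) follows by chaining the three identities with (\ref{enu:RMSISMAFR_inclusion_F}): $\RMS(X)=\RMS^\mathcal{F}(X)\subseteq\SM^\mathcal{F}(X)=\SM(X)$.
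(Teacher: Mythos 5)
Your proposal is correct and follows essentially the same route as the paper: part (1) via showing every $\mathcal{F}$-diagonally linked open set is $\mathcal{F}$-diagonally associated (you inline the subsequence-interpolation that the paper delegates to Lemma \ref{lem:diagonallyLinkedAssociatedSubsequenceStability}), part (2) via a countable base together with Proposition \ref{pro:infimumReformulation}, Lemma \ref{lem:combiningFolner}, and the key device of upgrading the Banach-density-realizing F{\o}lner sequence to one along which the quotients genuinely \emph{converge} so the value survives passage to the common subsequence (this is exactly the paper's Lemma \ref{lem:diagonallyLinkedAndAssociatedFindFolner}), and part (3) by the same chaining. The only cosmetic slip is claiming a subsequence with $\upperAdens_\mathcal{F}\geq c$ for every $k$ from a limsup $\geq c$; since you immediately work with the threshold $c/2$, this costs nothing.
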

\begin{remark}
    Note that the F{\o}lner sequence in (\ref{enu:RMSISMAFR_realization}) depends on the action $(X,G)$. 
\end{remark}

\begin{remark}
    The inclusion in (\ref{enu:RMSISMAFR_inclusion_F}) 
    can be strict as we will see in Example \ref{exa:lamplighterExample} below. 
    However, it remains open whether the inclusion in (\ref{enu:RMSISMAFR_inclusion}) can be strict. 
\end{remark}

Before presenting the proof of Theorem \ref{the:RMSInSMandFolnerRealisation}, we highlight the following consequence of Theorem \ref{the:RMSInSMandFolnerRealisation}(\ref{enu:RMSISMAFR_realization}).

\begin{corollary}
\label{cor:RMSvsRMSFforAllF}
    Let $(X,G)$ be an action and $(x,x')\in X^2$.  
    \begin{enumerate}
        \item 
            $(x,x')$ is regionally mean sensitive 
            if and only if 
            it is $\mathcal{F}$-regionally mean sensitive w.r.t.\ some F{\o}lner sequence $\mathcal{F}$ in $G$. 
        \item 
            $(x,x')$ is regionally joint mean sensitive
            if and only if 
            it is $\mathcal{F}$-regionally joint mean sensitive w.r.t.\ some F{\o}lner sequence $\mathcal{F}$ in $G$. 
    \end{enumerate}
\end{corollary}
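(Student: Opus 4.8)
The plan is to read the corollary as a straightforward combination of two facts already established: the universal inclusion of Proposition~\ref{pro:basicPropertiesRMSandSM}(\ref{enu:BPRMSSM_inclusion}), which handles the ``easy'' direction of both equivalences, and the realization statement of Theorem~\ref{the:RMSInSMandFolnerRealisation}(\ref{enu:RMSISMAFR_realization}), which handles the ``hard'' direction. I would treat the two items of the corollary in parallel, since the argument for the joint notion $\SM$ is identical to that for $\RMS$ with the obvious replacements.

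First I would dispatch the backward implications. Recall from Proposition~\ref{pro:basicPropertiesRMSandSM}(\ref{enu:BPRMSSM_inclusion}) that $\RMS^\mathcal{F}(X)\subseteq \RMS(X)$ and $\SM^\mathcal{F}(X)\subseteq \SM(X)$ hold for \emph{every} F{\o}lner sequence $\mathcal{F}$. Thus, if $(x,x')$ is $\mathcal{F}$-regionally mean sensitive for some $\mathcal{F}$, then $(x,x')\in\RMS^\mathcal{F}(X)\subseteq\RMS(X)$, so it is regionally mean sensitive; the same one-line inclusion yields the corresponding implication for $\SM$. Next I would address the forward implications by invoking Theorem~\ref{the:RMSInSMandFolnerRealisation}(\ref{enu:RMSISMAFR_realization}). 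That result furnishes a single F{\o}lner sequence $\mathcal{F}$---depending on the action $(X,G)$---for which $\RMS(X)=\RMS^\mathcal{F}(X)$ and $\SM(X)=\SM^\mathcal{F}(X)$ simultaneously. Consequently, if $(x,x')\in\RMS(X)$, then $(x,x')\in\RMS^\mathcal{F}(X)$ for this particular $\mathcal{F}$, which is precisely the assertion that $(x,x')$ is $\mathcal{F}$-regionally mean sensitive for some F{\o}lner sequence; the argument for $\SM$ is verbatim.

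The main, and in fact only, obstacle lives entirely upstream, in the proof of Theorem~\ref{the:RMSInSMandFolnerRealisation}(\ref{enu:RMSISMAFR_realization}): the nontrivial content is the existence of one F{\o}lner sequence realizing both relations at once. Granting that theorem, the corollary reduces to the two one-line deductions above, so no further work is needed here.
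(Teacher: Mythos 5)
Your proposal is correct and matches the paper's (implicit) argument exactly: the paper presents the corollary as a direct consequence of Theorem~\ref{the:RMSInSMandFolnerRealisation}(\ref{enu:RMSISMAFR_realization}) for the forward direction, with the backward direction being the inclusion $\RMS^\mathcal{F}(X)\subseteq\RMS(X)$ (resp.\ $\SM^\mathcal{F}(X)\subseteq\SM(X)$) from Proposition~\ref{pro:basicPropertiesRMSandSM}. Nothing further is needed.
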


For the proof of Theorem \ref{the:RMSInSMandFolnerRealisation} we first show the following. 

\begin{lemma}
\label{lem:RMS_SMbridge}
    Let $(X,G)$ be an action and $\mathcal{F}$ be a F{\o}lner sequence. 
    Any open $U\subseteq X^2$ that is $\mathcal{F}$-diagonally linked is $\mathcal{F}$-diagonally associated. 
\end{lemma}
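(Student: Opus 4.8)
The plan is to exploit the structural difference between the two notions. In $\mathcal{F}$-diagonal linkedness the F{\o}lner index realising a positive density is allowed to vary freely for each member of the asymptotically diagonal sequence — this freedom is precisely the inner $\limsup$ hidden inside $\upperAdens_\mathcal{F}$ — whereas in $\mathcal{F}$-diagonal association the sequence index is locked to the F{\o}lner index. I would bridge this gap by a greedy diagonal selection that reindexes the sequence along the F{\o}lner sets, thereby ``unlocking'' the two indices.

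First I would extract a quantitative form of the hypothesis. Since $U$ is $\mathcal{F}$-diagonally linked there is an asymptotically diagonal sequence $(x_n,x_n')_{n\in\mathbb{N}}$ with $\limsup_{n\to\infty}\upperAdens_\mathcal{F}(G_U(x_n,x_n'))>0$, and after passing to a subsequence I may assume there is $c>0$ with $\upperAdens_\mathcal{F}(G_U(x_n,x_n'))>c$ for every $n$. Here openness of $U$ together with continuity of the action guarantees that each $G_U(x_n,x_n')$ is open, hence measurable, so the densities are well defined. Unwinding the definition of $\upperAdens_\mathcal{F}$ as a $\limsup$ over the F{\o}lner index, for each fixed $n$ the set
\[
K_n:=\Bigl\{k\in\mathbb{N} : \haar{G_U(x_n,x_n')\cap F_k}/\haar{F_k}>c\Bigr\}
\]
is infinite.

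Next I would build the required sequence. Because each $K_n$ is infinite I can choose, greedily, a strictly increasing sequence $k_1<k_2<\cdots$ with $k_n\in K_n$ for all $n$. I then define a new sequence indexed by the F{\o}lner index $k$: put $(\tilde x_{k_n},\tilde x_{k_n}'):=(x_n,x_n')$ for each $n$, and on every remaining index set both coordinates equal to a fixed point of $X$. This sequence is asymptotically diagonal, since the filler indices contribute distance zero, and whenever $k=k_n\to\infty$ one has $n\to\infty$, so $d(\tilde x_{k_n},\tilde x_{k_n}')=d(x_n,x_n')\to 0$.

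Finally I would verify the association inequality along the selected indices: since $k_n\in K_n$,
\[
\frac{\haar{G_U(\tilde x_{k_n},\tilde x_{k_n}')\cap F_{k_n}}}{\haar{F_{k_n}}}
=\frac{\haar{G_U(x_n,x_n')\cap F_{k_n}}}{\haar{F_{k_n}}}>c,
\]
whence $\limsup_{k\to\infty}\haar{G_U(\tilde x_k,\tilde x_k')\cap F_k}/\haar{F_k}\ge c>0$, which is exactly $\mathcal{F}$-diagonal association. I do not anticipate a genuine obstacle, as the statement is essentially a bookkeeping diagonalisation; the only point demanding care is to keep the reindexed sequence asymptotically diagonal while simultaneously letting the density-achieving pairs $(x_n,x_n')$ approach the diagonal, which is why I index the new sequence by the F{\o}lner index and pad the gaps with diagonal points.
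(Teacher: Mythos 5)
Your proof is correct and follows essentially the same route as the paper: pass to a uniform density bound $c>0$, greedily select a strictly increasing sequence of F{\o}lner indices $k_n$ realising density $>c$ for the $n$-th pair, and then reindex along the F{\o}lner sequence. The only cosmetic difference is that the paper packages the final reindexing as an appeal to Lemma~\ref{lem:diagonallyLinkedAssociatedSubsequenceStability} (via the subsequence $(F_{k_n})_{n\in\mathbb{N}}$), whereas you carry out the padding explicitly with a fixed diagonal point; the two are interchangeable.
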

\begin{proof}
    Denote $\mathcal{F}=(F_k)_{k\in \mathbb{N}}$. 
    By Proposition \ref{pro:infimumReformulation} there exists an asymptotically diagonal sequence $(x_n,x_n')_{n\in \mathbb{N}}$ that satisfies 
    \[c:=\tfrac{1}{2}\inf_{n\in \mathbb{N}} \limsup_{k\to \infty}\frac{\haar{G_U(x_n,x_n')\cap F_{k}}}{\haar{F_k}}>0.\]
    For $n=1$, we choose $k_1\in\mathbb{N}$ such that 
    \[\frac{\haar{G_U(x_1,x_1')\cap F_{k_1}}}{\haar{F_{k_1}}}\geq c.\]
    Proceeding inductively, we choose a strictly increasing sequence $(k_n)_{n\in \mathbb{N}}$ such that for each $n\in\mathbb{N}$ we have
    \[\frac{\haar{G_U(x_n,x_n')\cap F_{k_n}}}{\haar{F_{k_n}}}\geq c.\]
    Considering the subsequence $\mathcal{F}':=(F_{k_n})_{n\in \mathbb{N}}$
    we observe
    \[
    \limsup_{n\to \infty}\frac{\haar{G_U(x_n,x_n')\cap F_{k_n}}}{\haar{F_{k_n}}}
    \geq c>0.
    \]
    This shows that $U$ is $\mathcal{F}'$-diagonally associated and from Lemma \ref{lem:diagonallyLinkedAssociatedSubsequenceStability} we observe that $U$ is $\mathcal{F}$-diagonally associated. 
\end{proof}

\begin{lemma}
\label{lem:diagonallyLinkedAndAssociatedFindFolner}
    Let $(X,G)$ be an action and $\mathcal{B}$ a countable family of subsets of $X^2$. 
    \begin{enumerate}
        \item \label{enu:DLAFF_linked}
            If all $U\in\mathcal{B}$ are diagonally linked, then there exists a F{\o}lner sequence $\mathcal{F}$ such that all $U\in \mathcal{B}$ are $\mathcal{F}$-diagonally linked. 

        \item \label{enu:DLAFF_associated}
            If all $U\in\mathcal{B}$ are diagonally associated, then there exists a F{\o}lner sequence $\mathcal{F}$ such that all $U\in \mathcal{B}$ are $\mathcal{F}$-diagonally associated. 
    \end{enumerate}
\end{lemma}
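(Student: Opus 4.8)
The plan is to prove the two parts by different means, since the upper asymptotic density and the upper Banach density behave oppositely under passage to a common subsequence. Throughout I enumerate $\mathcal{B}=\{U_i:i\in\mathbb{N}\}$ and abbreviate $A_n^i:=G_{U_i}(x_n^i,x_n'^i)$ once the relevant asymptotically diagonal sequences have been fixed.

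For part (\ref{enu:DLAFF_associated}) I would combine the witnessing Følner sequences. For each $i$, Proposition \ref{pro:infimumReformulation}(\ref{enu_IR_diagonallyAssociated}) supplies an asymptotically diagonal sequence $(x_n^i,x_n'^i)_n$ and a Følner sequence $\mathcal{F}^i=(F_n^i)_n$ with $\inf_n \haar{A_n^i\cap F_n^i}/\haar{F_n^i}=:c_i>0$. Applying Lemma \ref{lem:combiningFolner} to the countable family $(\mathcal{F}^i)_i$ produces a Følner sequence $\mathcal{F}$ that shares a common subsequence $\mathcal{G}^i=(F_{l_k}^i)_k$ with each $\mathcal{F}^i$. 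Re-indexing the pairs along the same subsequence, the sequence $(x_{l_k}^i,x_{l_k}'^i)_k$ is asymptotically diagonal and inherits the uniform bound $\haar{A_{l_k}^i\cap F_{l_k}^i}/\haar{F_{l_k}^i}\geq c_i$, so $U_i$ is $\mathcal{G}^i$-diagonally associated; since $\mathcal{G}^i$ is a subsequence of $\mathcal{F}$, Lemma \ref{lem:diagonallyLinkedAssociatedSubsequenceStability}(\ref{enu:DLASS_associated}) upgrades this to $U_i$ being $\mathcal{F}$-diagonally associated. The only point to watch is that the associated notion ties the pair index to the Følner index, but this is exactly what makes re-indexing along the common subsequence legitimate.

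For part (\ref{enu:DLAFF_linked}) the same strategy fails: a common subsequence only satisfies $\upperAdens_{\mathcal{G}^i}\leq\upperAdens_{\mathcal{F}^i}$, so a large upper asymptotic density need not survive. I would therefore build $\mathcal{F}$ directly. Fix, via Proposition \ref{pro:infimumReformulation}(\ref{enu_IR_diagonallyLinked}), asymptotically diagonal sequences with $\inf_n\upperBdens(A_n^i)=:2c_i>0$. Choose an exhaustion $K_1\subseteq K_2\subseteq\cdots$ of $G$ by compact sets with $e\in K_1$ and $\bigcup_t K_t=G$, and enumerate $\{(i,n):i,n\in\mathbb{N}\}$ as $(i_t,n_t)_t$ so that every pair occurs infinitely often. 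The engine is Lemma \ref{lem:upperBanachDensityCharacterisation}: because $\upperBdens(A_{n_t}^{i_t})\geq 2c_{i_t}$ is attained as an upper asymptotic density along some Følner sequence, the cofinitely many sufficiently invariant members of that sequence meet its infinitely many members of density at least $c_{i_t}$, so one may select a compact $F_t$ that is both $(K_t,1/t)$-invariant, i.e.\ $\haar{K_t F_t\Delta F_t}<\haar{F_t}/t$, and satisfies $\haar{A_{n_t}^{i_t}\cap F_t}/\haar{F_t}\geq c_{i_t}$. Setting $\mathcal{F}:=(F_t)_t$, the invariance conditions together with the exhaustion force $\mathcal{F}$ to be Følner, while the recurrence of each $(i,n)$ yields $\upperAdens_\mathcal{F}(A_n^i)\geq c_i$ for all $i,n$. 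Thus $\inf_n\upperAdens_\mathcal{F}(A_n^i)\geq c_i>0$, and the original asymptotically diagonal sequence witnesses that $U_i$ is $\mathcal{F}$-diagonally linked.

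The main obstacle is precisely this asymmetry: the upper Banach density is a maximum over Følner sequences, so realizing it for a single set is immediate, yet realizing it (up to a uniform positive constant) for countably many sets along one fixed Følner sequence cannot be extracted from Lemma \ref{lem:combiningFolner} and must be arranged by hand. The two delicate verifications in the construction are that the $F_t$ can be chosen both highly invariant and of controlled density, and the routine check that $(K_t,1/t)$-invariance plus the exhaustion gives the Følner property of $\mathcal{F}$, where $e\in K_t$ is used to discard the easy half of the symmetric difference.
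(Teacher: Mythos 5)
Your proof of part~(\ref{enu:DLAFF_associated}) is correct and follows essentially the same route as the paper: extract a uniform positive bound via Proposition~\ref{pro:infimumReformulation}, merge the witnessing F{\o}lner sequences with Lemma~\ref{lem:combiningFolner}, re-index the pairs along the common subsequence, and upgrade with Lemma~\ref{lem:diagonallyLinkedAssociatedSubsequenceStability}. For part~(\ref{enu:DLAFF_linked}) your argument is also correct but genuinely different. The paper does \emph{not} abandon the combining-lemma strategy there: for each pair $(U,n)$ it uses Lemma~\ref{lem:upperBanachDensityCharacterisation} to choose a F{\o}lner sequence along which $\upperBdens(G_U(x_n^U,y_n^U))$ is attained as a genuine \emph{limit}; a limit survives passage to any common subsequence, so after applying Lemma~\ref{lem:combiningFolner} to the countable family indexed by $(U,n)$ one gets $\upperAdens_{\mathcal{F}}(G_U(x_n^U,y_n^U))\geq\upperBdens(G_U(x_n^U,y_n^U))\geq c$ by monotonicity under supersequences. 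Your stated obstacle --- that upper asymptotic density can only decrease along a common subsequence --- is real only for a $\limsup$ that is not a limit, and is exactly what this refinement removes; so your reason for declaring the strategy unworkable is too strong. Your replacement, a hands-on diagonal construction choosing each $F_t$ to be simultaneously $(K_t,1/t)$-invariant and of density at least $c_{i_t}$ in $A_{n_t}^{i_t}$ with every index pair revisited infinitely often, is nevertheless sound: the selection of $F_t$ is possible because infinitely many members of the witnessing sequence have large density while cofinitely many are sufficiently invariant, and the recurrence of each $(i,n)$ gives $\upperAdens_{\mathcal{F}}(A_n^i)\geq c_i$. What your version buys is independence from Lemma~\ref{lem:combiningFolner} in part~(\ref{enu:DLAFF_linked}) (you essentially re-prove it in the form you need); what it costs is the routine but nontrivial verification that $(F_t)_t$ is F{\o}lner, where, as you note, one should take the exhaustion symmetric (or argue via $\haar{F_t\setminus kF_t}=\haar{k^{-1}F_t\setminus F_t}$) to control the half $F_t\setminus KF_t$ of the symmetric difference for compact $K$ not containing $e$.
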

\begin{proof}
(\ref{enu:DLAFF_linked}):
    For each $U\in \mathcal{B}$, Proposition \ref{pro:infimumReformulation} allows one to choose an asymptotically diagonal sequence $(x_n^U,y_n^U)$ with 
    \[c:=\inf_{n\in \mathbb{N}} \upperBdens(G_U(x_n^U,y_n^U))>0.\]
    Furthermore, for $n\in \mathbb{N}$, Lemma \ref{lem:upperBanachDensityCharacterisation} allows one to choose a F{\o}lner sequence $\mathcal{F}^{(U,n)}=(F_k^{(U,n)})_{k\in \mathbb{N}}$ such that 
    \[\upperBdens(G_U(x_n^U,y_n^U))=\lim_{k\to \infty} \frac{\haar{G_U(x_n^U,y_n^U)\cap F_k^{(U,n)}}}{\haar{F_k^{(U,n)}}}.\]
    
    Consider the countable family of F{\o}lner sequences $\{\mathcal{F}^{(U,n)};\, (U,n)\in \mathcal{B}\times \mathbb{N}\}$. 
    By Lemma \ref{lem:combiningFolner} there exists a F{\o}lner sequence $\mathcal{F}=(F_k)_{k\in \mathbb{N}}$ that shares a common subsequence with $\mathcal{F}^{(U,n)}$ for all $(U,n)\in \mathcal{B}\times \mathbb{N}$.

    To show that each element of $\mathcal{B}$ is $\mathcal{F}$-diagonally linked consider $U\in \mathcal{B}$. 
    Consider a strictly increasing sequence $(k_i)_{i\in \mathbb{N}}$ in $\mathbb{N}$ such that $\mathcal{F}':=(F_{k_i}^{(U,n)})_{i\in \mathbb{N}}$ is a subsequence of $\mathcal{F}$. 
    For all $n\in \mathbb{N}$ we observe that 
    \begin{align*}
        \upperAdens_{\mathcal{F}'}(G_U(x_n^U,y_n^U))
        &=\limsup_{i\to \infty} \frac{\haar{G_U(x_n^U,y_n^U)\cap F_{k_i}^{(U,n)}}}{\haar{F_{k_i}^{(U,n)}}}\\
        &=\lim_{k\to \infty} \frac{\haar{G_U(x_n^U,y_n^U)\cap F_k^{(U,n)}}}{\haar{F_k^{(U,n)}}}\\
        &=\upperBdens(G_U(x_n^U,y_n^U))\geq c.
    \end{align*}
    Thus, the asymptotically diagonal sequence $(x_n^U,y_n^U)_{n\in \mathbb{N}}$ satisfies 
    \[\limsup_{n\to \infty}\upperAdens_{\mathcal{F}'}(G_U(x_n^U,y_n^U))
    \geq c>0.\]
    This shows that $U$ is $\mathcal{F}'$-diagonally linked. 
    Since $\mathcal{F}'$ is a subsequence of $\mathcal{F}$ we observe $U$ to be $\mathcal{F}$-diagonally linked from Lemma \ref{lem:diagonallyLinkedAssociatedSubsequenceStability}. 

(\ref{enu:DLAFF_associated}):
    For each $U\in \mathcal{B}$ choose an asymptotically diagonal sequence $(x_n^U,y_n^U)$ and a F{\o}lner sequence $\mathcal{F}^U=(F_n^U)_{n\in\mathbb{N}}$ such that
    \[\lim_{n\to \infty} \frac{\haar{G_{U}(x_n^U,y_n^U)\cap F_n^U}}{\haar{F_n^U}}>0.\]
    From Lemma \ref{lem:combiningFolner} we know that there exists a F{\o}lner sequence $\mathcal{F}=(F_n)_n$ that shares a common subsequence with $\mathcal{F}^{U}$ for all $U\in \mathcal{B}$.

    To show that each element of $\mathcal{B}$ is $\mathcal{F}$-diagonally associated consider $U\in \mathcal{B}$. 
    Since $\mathcal{F}$ and $\mathcal{F}^U$ share a common subsequence there exists a strictly increasing sequence $(n_k)_{k\in \mathbb{N}}$ such that $\mathcal{F}':=(F_{n_k}^U)_{k\in \mathbb{N}}$ is a subsequence of $\mathcal{F}$. 
    Clearly, we have 
    \begin{align*}
        \lim_{k\to \infty} \frac{\haar{G_{U}(x_{n_k}^U,y_{n_k}^U)\cap F_{n_k}^U}}{\haar{F_{n_k}^U}}
        =
        \lim_{n\to \infty} \frac{\haar{G_{U}(x_n^U,y_n^U)\cap F_n^U}}{\haar{F_n^U}}>0.
    \end{align*}
    Since $(x_{n_k}^U,y_{n_k}^U)_{k\in \mathbb{N}}$ is asymptotically diagonal we observe that $U$ is $\mathcal{F}'$-diagonally associated. 
    Since $\mathcal{F}'$ is a subsequence of $\mathcal{F}$ we observe from Lemma \ref{lem:diagonallyLinkedAssociatedSubsequenceStability} that $U$ is $\mathcal{F}$-diagonally associated. 
\end{proof}

\begin{proof}[Proof of Theorem \ref{the:RMSInSMandFolnerRealisation}:]
(\ref{enu:RMSISMAFR_inclusion_F}): 
    Consider a F{\o}lner sequence $\mathcal{F}$ and $(x,x')\in \RMS^\mathcal{F}(X)$. 
    Any open neighbourhood $U$ of $(x,x')$ is $\mathcal{F}$-diagonally linked, and hence 
\linebreak    
    $\mathcal{F}$-diagonally associated by Lemma \ref{lem:RMS_SMbridge}. 
    We thus observe $(x,x')\in \SM^\mathcal{F}(X)$. 

(\ref{enu:RMSISMAFR_realization}):
    Let $\mathcal{B}$ be a countable base for the topology of $X$ and denote $\mathcal{B}^l$ and $\mathcal{B}^a$ for the sets of all diagonally linked $U\in \mathcal{B}$ and diagonally associated $U\in \mathcal{B}$, respectively. 
    It follows from Lemma \ref{lem:diagonallyLinkedAndAssociatedFindFolner} that there exist F{\o}lner sequences $\mathcal{F}^l$ and $\mathcal{F}^a$ such that all $U\in \mathcal{B}^l$ are $\mathcal{F}^l$-diagonally linked and all $U\in \mathcal{B}^a$ are $\mathcal{F}^a$-diagonally associated. 
    In consideration of Remark \ref{rem:combiningFolnerFinite} we choose a F{\o}lner sequence $\mathcal{F}$ that has $\mathcal{F}^l$ and $\mathcal{F}^a$ as subsequences. 
    It follows from Lemma \ref{lem:diagonallyLinkedAssociatedSubsequenceStability} that all $U\in \mathcal{B}^l$ are $\mathcal{F}$-diagonally linked and that all $U\in \mathcal{B}^a$ are $\mathcal{F}$-diagonally associated. 

    To show that $\RMS(X)=\RMS^\mathcal{F}(X)$ consider $(x,x')\in \RMS(X)$. 
    Any neighbourhood of $(x,x')$ is diagonally linked and hence there exists a neighbourhood base $\mathcal{B}'$ for $(x,x')$ with $\mathcal{B}'\subseteq \mathcal{B}^l$.
    Since any $U\in \mathcal{B}^l$ is $\mathcal{F}$-diagonally linked we observe $(x,x')\in \RMS^\mathcal{F}(X)$ from Remark \ref{rem:diagonallyLinkedAssociatedMonotonicity}.
    A similar argument shows that $\SM(X)=\SM^\mathcal{F}(X)$. 

(\ref{enu:RMSISMAFR_inclusion}):
    It follows from (\ref{enu:RMSISMAFR_realization}) that there exists a F{\o}lner sequence $\mathcal{F}$ such that $\RMS^\mathcal{F}(X)=\RMS(X)$ and $\SM^\mathcal{F}(X)=\SM(X)$.
    We thus observe from (\ref{enu:RMSISMAFR_inclusion_F}) that 
    $\RMS(X)= \RMS^\mathcal{F}(X)\subseteq \SM^\mathcal{F}(X)= \SM(X)$. 
\end{proof}

\section{Characterisation via invariant probability measures on $X^2$}
\label{sec:characterisationViaMeasures}
It is well-known that upper densities and invariant probability measures are closely related. 
In this section we will apply this interplay and present characterisations of the relations $\RMS^{(\mathcal{F})}$ and $\SM^{(\mathcal{F})}$ via invariant probability measures on $X^2$. 
For this it will be convenient to use the following notations. 

Let $(X,G)$ be an action and $\mathcal{F}=(F_n)_{n\in \mathbb{N}}$ a F{\o}lner sequence in $G$. 
For a sequence $\seq{x}=(x_n)_{n\in \mathbb{N}}$ in $X$ we denote 
$\mathcal{M}_\mathcal{F}(\seq{x})$ 
for the set of all cluster points of $((F_n)_*\delta_{x_n})_{n\in \mathbb{N}}$.
For $x\in X$ we denote 
$\mathcal{M}_\mathcal{F}(x):=\mathcal{M}_\mathcal{F}((x)_{n\in \mathbb{N}})$ 
for the set of all cluster points of $((F_n)_*\delta_{x})_{n\in \mathbb{N}}$.
Furthermore, we abbreviate 
$\mathcal{M}_G(x):=\mathcal{M}_G(\overline{G.x})$ 
for the set of all measures supported on the orbit closure of $x$. 
Note that we have 
$\mathcal{M}_{\mathcal{F}'}(x)
\subseteq\mathcal{M}_\mathcal{F}(x)
\subseteq \mathcal{M}_G(x)$ 
for any subsequence $\mathcal{F}'$ of $\mathcal{F}$. 
For the proof of the characterisation we will need the following. 

\begin{lemma}
\label{lem:densitiesAndMeasures}
    Let $(X,G)$ be an action and $\mathcal{F}$ be a F{\o}lner sequence in $G$. 
    Let $x\in X$ and $\seq{x}=(x_n)_{n \in \mathbb{N}}$ be a sequence in $X$. 
    \begin{enumerate}
    \item \label{enu:DAMopen}
    Let $U\subseteq X$ be open.
        \begin{enumerate}
        \item \label{enu:DAMopenAsymptoticDiagonal}
            For any 
            $\mu\in \mathcal{M}_\mathcal{F}(\seq{x})$ 
            we have
            $\limsup_{n\to \infty} {\haar{G_U(x_n)\cap F_n}}/{\haar{F_n}}\geq \mu(U).$
            
        \item \label{enu:DAMopenAsymptotic}
            For any 
            $\mu\in \mathcal{M}_\mathcal{F}(x)$ 
            we have
            $\upperAdens_\mathcal{F}(G_U(x))\geq \mu(U).$
            
        \item \label{enu:DAMopenBanach}
            For any 
            $\mu\in \mathcal{M}_G(x)$ 
            we have
            $\upperBdens(G_U(x))\geq \mu(U)$.
        \end{enumerate}
    \item \label{enu:DAMclosed}
    Let $A\subseteq X$ be closed. 
        \begin{enumerate}
        \item \label{enu:DAMclosedAsymptoticDiagonal}
            There exists 
            $\mu\in \mathcal{M}_\mathcal{F}(\seq{x})$ 
            with 
            $\mu(A)\geq \limsup_{n\to \infty} {\haar{G_A(x_n)\cap F_n}}/{\haar{F_n}}$.
    
        \item \label{enu:DAMclosedAsymptotic}
            There exists 
            $\mu\in \mathcal{M}_\mathcal{F}(x)$ 
            with 
            $\mu(A)\geq \upperAdens_\mathcal{F}(G_A(x))$.
        \item \label{enu:DAMclosedBanach}
            There exists 
            $\mu\in \mathcal{M}_G(x)$ 
            with 
            $\mu(A)\geq \upperBdens(G_A(x)).$
        \end{enumerate}
    \end{enumerate}
\end{lemma}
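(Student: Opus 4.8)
The whole lemma rests on a single elementary identity: for any Borel set $B\subseteq X$ and any $y\in X$, unwinding the definition of the Cesàro average $(F_n)_*\delta_y=\tfrac{1}{\haar{F_n}}\int_{F_n}\delta_{g.y}\dd g$ and using $\delta_{g.y}(B)=\mathbf{1}_B(g.y)$ gives
\[
\bigl((F_n)_*\delta_y\bigr)(B)=\frac{1}{\haar{F_n}}\haar{\{g\in F_n:\ g.y\in B\}}=\frac{\haar{G_B(y)\cap F_n}}{\haar{F_n}},
\]
where $G_B(y)=\{g\in G:\ g.y\in B\}$ is Borel since $g\mapsto g.y$ is continuous. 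Thus every density appearing in the statement is exactly the $B$-mass of an empirical measure $(F_n)_*\delta_{x_n}$, and the lemma becomes a semicontinuity statement for weak*-limits. For the open-set assertions I would argue by lower semicontinuity. For \eqref{enu:DAMopenAsymptoticDiagonal}, pick a subsequence along which $(F_{n_k})_*\delta_{x_{n_k}}\to\mu$; the portmanteau theorem gives $\mu(U)\le\liminf_k\bigl((F_{n_k})_*\delta_{x_{n_k}}\bigr)(U)$, and combining this with the identity and $\liminf_k\le\limsup_n$ yields the claim. Part \eqref{enu:DAMopenAsymptotic} is the special case $x_n\equiv x$.

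For the closed-set assertions I would argue by upper semicontinuity together with compactness. In \eqref{enu:DAMclosedAsymptoticDiagonal}, set $\alpha:=\limsup_n\haar{G_A(x_n)\cap F_n}/\haar{F_n}$, choose a subsequence realising $\alpha$, and pass by compactness of $\mathcal{M}(X)$ to a weak*-convergent sub-subsequence with limit $\mu$. By the Krylov--Bogolyubov argument recalled in Section~\ref{sec:preliminaries} (which applies uniformly, independently of the points $x_n$, via the Følner property) the limit $\mu$ is invariant, so $\mu\in\mathcal{M}_\mathcal{F}(\seq{x})$; the portmanteau inequality $\limsup\bigl((F_{n_k})_*\delta_{x_{n_k}}\bigr)(A)\le\mu(A)$ then forces $\mu(A)\ge\alpha$. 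Part \eqref{enu:DAMclosedAsymptotic} is again the constant-sequence case. For \eqref{enu:DAMclosedBanach} I would use Lemma~\ref{lem:upperBanachDensityCharacterisation} to select a Følner sequence $\mathcal{F}'$ with $\upperAdens_{\mathcal{F}'}(G_A(x))=\upperBdens(G_A(x))$, apply \eqref{enu:DAMclosedAsymptotic} to $\mathcal{F}'$, and conclude via $\mathcal{M}_{\mathcal{F}'}(x)\subseteq\mathcal{M}_G(x)$.

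The genuinely delicate part is the Banach-density open-set estimate \eqref{enu:DAMopenBanach}, because an arbitrary $\mu\in\mathcal{M}_G(x)$ need not be ergodic, whereas the generic-point tool Lemma~\ref{lem:genericPoints}(2) is available only for ergodic measures. I would first dispatch the ergodic case: if $\nu$ is ergodic with $\supp(\nu)\subseteq\overline{G.x}$, then Lemma~\ref{lem:genericPoints}(2) produces a Følner sequence $\mathcal{F}^\nu$ for which $x$ is generic, so $\nu\in\mathcal{M}_{\mathcal{F}^\nu}(x)$ and part \eqref{enu:DAMopenAsymptotic} gives $\upperBdens(G_U(x))\ge\upperAdens_{\mathcal{F}^\nu}(G_U(x))\ge\nu(U)$. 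For a general $\mu\in\mathcal{M}_G(x)$ I would invoke the ergodic decomposition $\mu=\int\nu\dd\tau(\nu)$; since $\mu(\overline{G.x})=1$ and $\overline{G.x}$ is closed and invariant, $\tau$-almost every component $\nu$ is ergodic with $\supp(\nu)\subseteq\overline{G.x}$, so the ergodic-case inequality $\nu(U)\le\upperBdens(G_U(x))$ holds $\tau$-almost everywhere. Integrating against $\tau$ (the integrand $\nu\mapsto\nu(U)$ is lower semicontinuous, hence Borel) then yields
\[
\mu(U)=\int\nu(U)\dd\tau(\nu)\le\upperBdens(G_U(x)),
\]
which is the assertion. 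The main obstacle is therefore exactly this reduction: packaging the ergodic decomposition cleanly and checking that almost every ergodic component remains supported on $\overline{G.x}$, so that Lemma~\ref{lem:genericPoints}(2) becomes applicable.
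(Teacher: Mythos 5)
Your proposal is correct and follows essentially the same route as the paper: the identity $\bigl((F_n)_*\delta_y\bigr)(B)=\haar{G_B(y)\cap F_n}/\haar{F_n}$ combined with the Portmanteau theorem for the open/closed cases, the constant-sequence specialisation, Lemma~\ref{lem:upperBanachDensityCharacterisation} for \eqref{enu:DAMclosedBanach}, and Lemma~\ref{lem:genericPoints}(2) for \eqref{enu:DAMopenBanach}. The only difference is that you spell out in full the ergodic-decomposition reduction for \eqref{enu:DAMopenBanach}, which the paper compresses into a ``w.l.o.g.\ $\mu$ is ergodic''; your expanded version is the correct justification of that step.
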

\begin{proof}
(\ref{enu:DAMopenAsymptoticDiagonal}): 
    Consider $\mu\in \mathcal{M}_\mathcal{F}(\seq{x})$. 
    Let $(F_{n_k})_{k\in \mathbb{N}}$ be a subsequence of $\mathcal{F}$ such that 
    \[(F_{n_k})_*\delta_{x_{n_k}}\to \mu.\] 
    For $k\in \mathbb{N}$ we observe that 
    \begin{align*}
        \frac{\haar{G_U(x_{n_k})\cap F_{n_k}}}{\haar{F_{n_k}}} 
        = \frac{\haar{\{g\in F_{n_k};\, g.x_{n_k}\in U\}}}{\haar{F_{n_k}}} 
        = (F_{n_k})_*\delta_{x_{n_k}}(U). 
    \end{align*}    
    Since $U$ is open and $(F_{n_k})_*\delta_{x_{n_k}}\to \mu$ it follows from the Portmanteau theorem that
    \begin{align*}
        \limsup_{n\to \infty} \frac{\haar{G_U(x_{n})\cap F_{n}}}{\haar{F_{n}}}
        &\geq \liminf_{k\to \infty}(F_{n_k})_*\delta_{x_{n_k}}(U)\geq \mu(U). 
    \end{align*}

(\ref{enu:DAMopenAsymptotic}):
    For $x\in X$ consider the constant sequence $\seq{x}:=(x)_{n\in \mathbb{N}}$
    and note that $\mathcal{M}_\mathcal{F}(\seq{x})=\mathcal{M}_\mathcal{F}(x)$. 
    Since 
    $
        \upperAdens_\mathcal{F}(G_U(x))
        =\limsup_{n\to \infty} {\haar{G_U(x)\cap F_n}}/{\haar{F_n}}
    $ 
    the statement follows from (\ref{enu:DAMopenAsymptoticDiagonal}). 
        
(\ref{enu:DAMopenBanach}): 
    Let $\mu\in \mathcal{M}_G(x)$. 
    Considering the ergodic decomposition\footnote{
    For details on the ergodic decomposition see \cite[Theorem 1.1]{GernotKlaus2000} or \cite[Theorem 8.20]{EinsiedlerWardbook2011}. 
    } 
    of $\mu$ we assume w.l.o.g.\ that $\mu$ is ergodic. 
    From Lemma \ref{lem:genericPoints} we know that there exists a F{\o}lner sequence $\mathcal{F}$ such that $x$ is $\mathcal{F}$-generic for $\mu$.
    We thus have $\mu\in \mathcal{M}_\mathcal{F}(x)$ and observe from (\ref{enu:DAMopenAsymptotic}) that 
    \begin{align*}
        \upperBdens(G_U(x))\geq \upperAdens_\mathcal{F}(G_U(x))\geq \mu(U). 
    \end{align*}

(\ref{enu:DAMclosedAsymptoticDiagonal}):
    Let $\mathcal{F}'=(F_{n_k})_{k\in \mathbb{N}}$ be a subsequence of $\mathcal{F}$ such that 
    \[\limsup_{n\to \infty} \frac{\haar{G_A(x_n)\cap F_n}}{\haar{F_n}}=\lim_{k\to \infty} \frac{\haar{G_A(x_{n_k})\cap F_{n_k}}}{\haar{F_{n_k}}}.\]
    Let $\mu$ be a cluster point of $((F_{n_k})_*\delta_{x_{n_k}})_{k\in \mathbb{N}}$ and note that 
    $\mu\in \mathcal{M}_\mathcal{F}(\seq{x})$. 
    By possibly restricting to a further subsequence we assume w.l.o.g.\ that 
    $(F_{n_k})_*\delta_{x_{n_k}}\to \mu$. 
    Since $A$ is closed we observe from the Portmanteau theorem that
     \begin{align*}
        \mu(A)
        \geq \limsup_{k\to \infty} (F_{n_k})_*\delta_{x_{n_k}}(A)
        = \lim_{k\to \infty} \frac{\haar{G_A(x_{n_k})\cap F_{n_k}}}{\haar{F_{n_k}}}
        = \limsup_{n\to \infty} \frac{\haar{G_A(x_n)\cap F_n}}{\haar{F_n}}. 
    \end{align*} 

(\ref{enu:DAMclosedAsymptotic}):
    Considering the sequence $\seq{x}:=(x)_{n\in \mathbb{N}}$ we obtain the statement from (\ref{enu:DAMclosedAsymptoticDiagonal}). 
    
(\ref{enu:DAMclosedBanach}): 
    From Lemma \ref{lem:upperBanachDensityCharacterisation} we know that there exists a F{\o}lner sequence $\mathcal{F}$ that satisfies
    $
        \upperBdens(G_A(x))
        =\upperAdens_\mathcal{F}(G_A(x))
    $
    and (\ref{enu:DAMclosedAsymptotic}) yields the existence of
    $\mu\in \mathcal{M}_\mathcal{F}(x)\subseteq \mathcal{M}_G(x)$ with 
    $\mu(A)\geq \upperAdens_\mathcal{F}(G_A(x))=\upperBdens(G_A(x))$. 
\end{proof}

We next present characterisations of
$\RMS^{(\mathcal{F})}$ and $\SM^{(\mathcal{F})}$
in terms of invariant probability measures on $X^2$.

\begin{proposition}
\label{pro:characterisationViaMeasures}
Let $(X,G)$ be an action, $(x,x')\in X^2$ and $\mathcal{F}$ be a F{\o}lner sequence in $G$.
\begin{enumerate}
    \item \label{enu:CVM_RMPF}
        The following statements are equivalent. 
        \begin{enumerate}
            \item \label{enu:CVM_RMPF_RMS}
            $(x,x')\in \RMS^\mathcal{F}(X)$.
            \item \label{enu:CVM_RMPF_limsup}
            For any open neighbourhood $U$ of $(x,x')$ 
            there exist an asymptotically diagonal sequence $(x_n,x_n')_{n \in \mathbb{N}}$ 
            and 
            $\mu_n\in \mathcal{M}_\mathcal{F}(x_n,x_n')$ for all $n\in \mathbb{N}$
            such that 
            $\limsup_{n\to \infty}\mu_n(U)>0.$
            \item \label{enu:CVM_RMPF_inf}
            For any open neighbourhood $U$ of $(x,x')$ 
            there exist an asymptotically diagonal sequence $(x_n,x_n')_{n \in \mathbb{N}}$ 
            and 
            $\mu_n\in \mathcal{M}_\mathcal{F}(x_n,x_n')$ for all $n\in \mathbb{N}$
            such that 
            $\inf_{n\in \mathbb{N}}\mu_n(U)>0.$
        \end{enumerate}    
        
    \item \label{enu:CVM_RMP}
        The following statements are equivalent. 
        \begin{enumerate}
            \item \label{enu:CVM_RMP_RMS}
            $(x,x')\in \RMS(X)$.
            \item \label{enu:CVM_RMP_limsup}
            For any open neighbourhood $U$ of $(x,x')$ 
            there exist an asymptotically diagonal sequence $(x_n,x_n')_{n \in \mathbb{N}}$ 
            and 
            $\mu_n\in \mathcal{M}_G(x_n,x_n')$ for all $n\in \mathbb{N}$
            such that 
            $\limsup_{n\to \infty}\mu_n(U)>0.$
            \item \label{enu:CVM_RMP_inf}
            For any open neighbourhood $U$ of $(x,x')$ 
            there exist an asymptotically diagonal sequence $(x_n,x_n')_{n \in \mathbb{N}}$ 
            and 
            $\mu_n\in \mathcal{M}_G(x_n,x_n')$ for all $n\in \mathbb{N}$
            such that 
            $\inf_{n\in \mathbb{N}}\mu_n(U)>0.$
        \end{enumerate}    
        
    \item \label{enu:CVM_SMF}
        We have $(x,x')\in \SM^\mathcal{F}(X)$
            if and only if 
        for any open neighbourhood $U$ of $(x,x')$
        there exist an asymptotically diagonal sequence $\seq{x}=(x_n,x_n')_{n \in \mathbb{N}}$ 
        and $\mu\in \mathcal{M}_\mathcal{F}(\seq{x})$ with $\mu(U)>0$.

    \item \label{enu:CVM_SM}
        We have $(x,x')\in \SM(X)$
            if and only if 
        for any open neighbourhood $U$ of $(x,x')$
        there exist an asymptotically diagonal sequence $\seq{x}=(x_n,x_n')_{n \in \mathbb{N}}$, a F{\o}lner sequence $\mathcal{F}$ 
        and $\mu\in \mathcal{M}_\mathcal{F}(\seq{x})$ with $\mu(U)>0$.
\end{enumerate}
\end{proposition}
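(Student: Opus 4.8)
The plan is to prove the four equivalences of Proposition~\ref{pro:characterisationViaMeasures} by translating the density conditions defining $\RMS^{(\mathcal{F})}$ and $\SM^{(\mathcal{F})}$ into statements about the cluster measures $\mathcal{M}_\mathcal{F}(\seq{x})$, $\mathcal{M}_\mathcal{F}(x)$ and $\mathcal{M}_G(x)$ on $X^2$. The essential tool is Lemma~\ref{lem:densitiesAndMeasures}, applied with $X$ replaced by $X^2$ and the diagonal pair sequence $\seq{x}=(x_n,x_n')_{n\in\mathbb{N}}$ playing the role of the sequence there. The open-set direction (\ref{enu:DAMopen}) of that lemma converts existence of a measure charging $U$ into a lower bound on the relevant density, while the closed-set direction (\ref{enu:DAMclosed}) converts a density lower bound into existence of a measure charging a closed set; combining the two on open neighbourhoods and their closed sub-neighbourhoods is exactly what each equivalence requires.

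First I would prove part~(\ref{enu:CVM_SMF}), the cleanest case. For the forward implication, if $(x,x')\in\SM^\mathcal{F}(X)$ then each open neighbourhood $U$ is $\mathcal{F}$-diagonally associated, so there is an asymptotically diagonal $\seq{x}$ with $\limsup_n \haar{G_U(\seq{x})\cap F_n}/\haar{F_n}>0$; shrinking $U$ to a smaller open $V$ with $\overline V\subseteq U$ and applying Lemma~\ref{lem:densitiesAndMeasures}(\ref{enu:DAMclosedAsymptoticDiagonal}) to the closed set $\overline V$ produces $\mu\in\mathcal{M}_\mathcal{F}(\seq{x})$ with $\mu(U)\geq\mu(\overline V)>0$. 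For the converse, given $\mu\in\mathcal{M}_\mathcal{F}(\seq{x})$ with $\mu(U)>0$, Lemma~\ref{lem:densitiesAndMeasures}(\ref{enu:DAMopenAsymptoticDiagonal}) immediately gives the associated density bound, so $U$ is $\mathcal{F}$-diagonally associated. Part~(\ref{enu:CVM_SM}) follows the same pattern, now letting the F{\o}lner sequence vary and invoking the fact that $U$ being diagonally associated means it is $\mathcal{F}$-diagonally associated for \emph{some} F{\o}lner sequence.

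For the $\RMS$ statements (\ref{enu:CVM_RMPF}) and (\ref{enu:CVM_RMP}), the measures are attached to each individual pair $(x_n,x_n')$ rather than to the whole sequence, which matches the structure of the single-point parts (\ref{enu:DAMopenAsymptotic})--(\ref{enu:DAMopenBanach}) and (\ref{enu:DAMclosedAsymptotic})--(\ref{enu:DAMclosedBanach}) of the lemma applied on $X^2$. The implications (limsup)$\Rightarrow$(inf) should be handled by the inf-reformulation already available in Proposition~\ref{pro:infimumReformulation}: I would show (a)$\Leftrightarrow$(b) via the lemma and then use that the density characterisations in Proposition~\ref{pro:infimumReformulation} allow replacing $\limsup$ by $\inf$ at the level of the densities, transferring this to the measures by extracting, for each $n$, a measure $\mu_n$ with $\mu_n(U)$ bounded below by (a shrinking of) the density $\upperAdens_\mathcal{F}(G_U(x_n,x_n'))$ resp.\ $\upperBdens(G_U(x_n,x_n'))$. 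Concretely, (a)$\Rightarrow$(c) uses Lemma~\ref{lem:densitiesAndMeasures}(\ref{enu:DAMclosedAsymptotic}) resp.\ (\ref{enu:DAMclosedBanach}) on a closed sub-neighbourhood $\overline V$ to manufacture $\mu_n$ with $\mu_n(U)\geq\mu_n(\overline V)\geq\upperAdens_\mathcal{F}(G_{\overline V}(x_n,x_n'))$, and the inf-form of Proposition~\ref{pro:infimumReformulation} guarantees a uniform positive lower bound; (c)$\Rightarrow$(b) is trivial and (b)$\Rightarrow$(a) uses the open parts (\ref{enu:DAMopenAsymptotic}) resp.\ (\ref{enu:DAMopenBanach}).

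The main obstacle I anticipate is the bookkeeping around shrinking neighbourhoods: Lemma~\ref{lem:densitiesAndMeasures} gives a clean inequality $\mu(U)\geq\mu(\text{open})$ only in the open-set direction and $\mu(\text{closed})\geq\text{density}$ only in the closed direction, so to close the loop on an arbitrary open neighbourhood $U$ I must interpose a closed set $\overline V$ with $V$ open and $(x,x')\in V\subseteq\overline V\subseteq U$, relying on Remark~\ref{rem:diagonallyLinkedAssociatedMonotonicity} (monotonicity of the diagonally linked/associated property) to know that passing between $V$, $\overline V$ and $U$ does not lose the defining property. A secondary subtlety is that in (\ref{enu:CVM_RMP}) the relevant density is the upper Banach density, whose realisation as a genuine F{\o}lner-average limit requires Lemma~\ref{lem:upperBanachDensityCharacterisation} and the genericity statement Lemma~\ref{lem:genericPoints} that underlies Lemma~\ref{lem:densitiesAndMeasures}(\ref{enu:DAMopenBanach})/(\ref{enu:DAMclosedBanach}); I expect the argument there to mirror (\ref{enu:CVM_RMPF}) verbatim with $\mathcal{M}_\mathcal{F}$ replaced by $\mathcal{M}_G$ and $\upperAdens_\mathcal{F}$ by $\upperBdens$.
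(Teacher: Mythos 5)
Your proposal is correct and follows essentially the same route as the paper's proof: both reduce each equivalence to Lemma~\ref{lem:densitiesAndMeasures} applied on $X^2$, interpose a closed neighbourhood $A=\overline V$ with $V\subseteq A\subseteq U$ to pass between the open-set and closed-set directions of that lemma, and use the inf-reformulation of Proposition~\ref{pro:infimumReformulation} together with the monotonicity $G_V\subseteq G_A\subseteq G_U$ to get the uniform lower bound in the (a)$\Rightarrow$(c) step. No gaps.
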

\begin{proof}
(\ref{enu:CVM_RMPF_RMS})$\Rightarrow$(\ref{enu:CVM_RMPF_inf}):
    Consider $(x,x')\in \RMS^\mathcal{F}(X)$ and let $U$ be an open neighbourhood of $(x,x')$. 
    Choose a closed neighbourhood $A$ and an open neighbourhood $V$ of $(x,x')$ such that 
    $V\subseteq A\subseteq U$. 
    From Proposition \ref{pro:infimumReformulation} we know that there exists an asymptotically diagonal sequence $(x_n,x_n')_{n\in \mathbb{N}}$ in $X^2$ with 
    \[c:=\inf_{n\in \mathbb{N}} \upperAdens_\mathcal{F}(G_V(x_n,x_n'))>0.\]
    For $n\in \mathbb{N}$ Lemma \ref{lem:densitiesAndMeasures} allows to choose $\mu_n\in \mathcal{M}_\mathcal{F}(x_n,x_n')$ such that 
    \begin{align*}
    \mu_n(U)
    \geq\mu_n(A)
    \geq \upperAdens_\mathcal{F}(G_A(x_n,x_n'))
    \geq \upperAdens_\mathcal{F}(G_V(x_n,x_n'))\geq c>0.
    \end{align*}
    
(\ref{enu:CVM_RMPF_inf})$\Rightarrow$(\ref{enu:CVM_RMPF_limsup}):
    Trivial. 

(\ref{enu:CVM_RMPF_limsup})$\Rightarrow$(\ref{enu:CVM_RMPF_RMS}):
    Consider an open neighbourhood $U$ of $(x,x')$. 
    By our assumption on $(x,x')$ there exists an asymptotically diagonal sequence $(x_n,x_n')_{n\in \mathbb{N}}$ in $X^2$ such that for all $n\in \mathbb{N}$
    there exists $\mu_n\in \mathcal{M}_\mathcal{F}(x_n,x_n')$ satisfying $\limsup_{n\to \infty}\mu_n(U)>0$. 
    From Lemma \ref{lem:densitiesAndMeasures} we observe 
    \begin{align*}
        \limsup_{n\to \infty} \upperAdens_\mathcal{F}(G_U(x_{n},x_{n}'))
        \geq \limsup_{n\to \infty} \mu_{n}(U) >0.
    \end{align*}
    This shows $(x,x')\in \RMS^\mathcal{F}(X)$. 
    
(\ref{enu:CVM_RMP}):
    This follows from a similar argument as (\ref{enu:CVM_RMPF}) applying the respective statements about upper Banach densities from Lemma \ref{lem:densitiesAndMeasures}. 

(\ref{enu:CVM_SMF}, '$\Rightarrow$'):
    Consider a pair $(x,x')\in \SM^\mathcal{F}(X)$ and let $U$ be an open neighbourhood of $(x,x')$. 
    Choose a closed neighbourhood $A$ and an open neighbourhood $V$ of $(x,x')$ such that 
    $V\subseteq A\subseteq U$. 
    Since $(x,x')\in \SM^\mathcal{F}(X)$ there exists an asymptotically diagonal sequence $\seq{x}=(x_n,x_n')_{n\in \mathbb{N}}$ in $X^2$ with
    \[\limsup_{n\to \infty} \frac{\haar{G_V(x_n,x_n')\cap F_n}}{\haar{F_n}}>0.\]
    It follows from Lemma \ref{lem:densitiesAndMeasures} that there exists $\mu\in \mathcal{M}_\mathcal{F}(\seq{x})$ with 
    \[
    \mu(U)
    \geq \mu(A)
    \geq \limsup_{n\to \infty} \frac{\haar{G_A(x_n,x_n')\cap F_n}}{\haar{F_n}}
    \geq \limsup_{n\to \infty} \frac{\haar{G_V(x_n,x_n')\cap F_n}}{\haar{F_n}}
    >0.
    \]

(\ref{enu:CVM_SMF}, '$\Leftarrow$'):
    Consider an open neighbourhood $U$ of $(x,x')$. 
    By our assumption on $(x,x')$ there exists an asymptotically diagonal sequence $\seq{x}=(x_n,x_n')_{n\in \mathbb{N}}$ in $X^2$ and $\mu\in \mathcal{M}_\mathcal{F}(\seq{x})$ with $\mu(U)>0$.  
    From Lemma \ref{lem:densitiesAndMeasures} we observe 
    \begin{align*}
        \limsup_{n\to\infty} \frac{\haar{G_U(x_n,x_n')\cap F_n}}{\haar{F_n}}
        \geq \mu(U)>0.
    \end{align*}
    This shows $(x,x')\in \SM^\mathcal{F}(X)$. 

(\ref{enu:CVM_SM}):
    This follows with a similar argument as for (\ref{enu:CVM_SMF}) from Lemma \ref{lem:densitiesAndMeasures}. 
\end{proof}

\section{The maximal $(\mathcal{F}-)$mean equicontinuous factor}
\label{sec:mainTheorem}
This section is devoted to the proof of our main result, the characterisations of the maximal $(\mathcal{F}-)$mean equicontinuous factor via the introduced relations. 

\subsection{The maximal $(\mathcal{F}-)$mean equicontinuous factor via $\RMS^{(\mathcal{F})}$}
The following lemma records the stability of regional (joint) mean sensitivity under factor maps.

\begin{lemma}
\label{lem:factorStabilityI}
    Let $\pi\colon X\to Y$ be a factor map and let $\mathcal{F}$ be a F{\o}lner sequence. 
    For any $(x,x')\in \RMS^\mathcal{F}(X)$ we have $(\pi(x),\pi(x'))\in \RMS^\mathcal{F}(Y)$. 
    A similar statement holds for 
    $\RMS$, 
    $\SM^\mathcal{F}$
    and $\SM$. 
\end{lemma}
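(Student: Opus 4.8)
The statement claims that the relations $\RMS^\mathcal{F}$, $\RMS$, $\SM^\mathcal{F}$, $\SM$ all push forward under a factor map $\pi\colon X\to Y$. Let me think about what structure these definitions share and how a factor map interacts with the key quantities: hitting-time sets, upper densities, and asymptotically diagonal sequences.

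The core observation is how $\pi$ interacts with hitting times. If $V\subseteq Y^2$ is an open neighborhood of $(\pi(x),\pi(x'))$, then $U:=(\pi\times\pi)^{-1}(V)\subseteq X^2$ is an open neighborhood of $(x,x')$ (by continuity of $\pi\times\pi$). For any pair $(y,y')\in X^2$ and any $g\in G$, equivariance gives $g.(\pi(y),\pi(y'))=(\pi(g.y),\pi(g.y'))$, so $(g.\pi(y),g.\pi(y'))\in V$ iff $(g.y,g.y')\in U$. Hence $G_V(\pi(y),\pi(y'))=G_U(y,y')$ — the hitting-time sets coincide exactly.

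So here's the plan. Given $(x,x')\in\RMS^\mathcal{F}(X)$ and an arbitrary open neighborhood $V$ of $(\pi(x),\pi(x'))$, I pull it back to $U:=(\pi\times\pi)^{-1}(V)$, which is an open neighborhood of $(x,x')$. By $(x,x')\in\RMS^\mathcal{F}(X)$ there is an asymptotically diagonal sequence $(x_n,x_n')_n$ with $\limsup_n\upperAdens_\mathcal{F}(G_U(x_n,x_n'))>0$. The pushed-forward sequence $(\pi(x_n),\pi(x_n'))_n$ is asymptotically diagonal — this uses uniform continuity of $\pi$ on the compact space $X$, so $d_X(x_n,x_n')\to0$ forces $d_Y(\pi(x_n),\pi(x_n'))\to0$. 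By the hitting-time identity above, $G_V(\pi(x_n),\pi(x_n'))=G_U(x_n,x_n')$, so the densities are literally equal and the $\limsup$ condition transfers verbatim. Thus $V$ is $\mathcal{F}$-diagonally linked, and since $V$ was arbitrary, $(\pi(x),\pi(x'))\in\RMS^\mathcal{F}(Y)$. The three remaining relations follow by the identical argument, substituting the corresponding density: $\upperBdens$ for $\RMS$, and the explicit Følner average $\haar{G_{\cdot}(x_n,x_n')\cap F_n}/\haar{F_n}$ (with an existentially quantified Følner sequence in the $\SM$ case) for $\SM^\mathcal{F}$ and $\SM$. In each case the density is defined purely in terms of the hitting-time set, so equality of hitting-time sets gives equality of densities.

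The only genuinely non-formal step is verifying that the pushed-forward sequence stays asymptotically diagonal, and I expect this to be the main (though mild) obstacle: it rests on \emph{uniform} continuity of $\pi$, which holds because $X$ is compact and $\pi$ is continuous, so the modulus of continuity is uniform across the sequence. Everything else is the equivariance bookkeeping encapsulated in the identity $G_V(\pi(y),\pi(y'))=G_U(y,y')$. Given how cleanly this identity transports each of the four density conditions, I would write the proof once for $\RMS^\mathcal{F}$ in full and then remark that the cases $\RMS$, $\SM^\mathcal{F}$, $\SM$ are handled mutatis mutandis by replacing the density functional.
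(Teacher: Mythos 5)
Your proof is correct and follows essentially the same route as the paper: pull back the open neighbourhood $V$ to $U=(\pi\times\pi)^{-1}(V)$, observe that equivariance gives $G_U(y,y')\subseteq G_V(\pi(y),\pi(y'))$ (you note the stronger equality, which also holds), and transfer the density condition along the pushed-forward asymptotically diagonal sequence. Your explicit remark that uniform continuity of $\pi$ on the compact space $X$ is what keeps the image sequence asymptotically diagonal is a detail the paper leaves implicit, but it is the same argument.
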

\begin{proof}
    Denote $\phi\colon X^2\to Y^2$ for the factor map $(x,x')\mapsto (\pi(x),\pi(x'))$. 
    For any open set $V\subseteq Y^2$ we have that $U:=\phi^{-1}(V)$ satisfies $G_U(z,z')\subseteq G_V(\pi(z),\pi(z'))$ for all $(z,z')\in X^2$.
    From this it is straightforward to observe that whenever $U$ is $\mathcal{F}$-diagonally linked, then also $V$ must be $\mathcal{F}$-diagonally linked and we observe the statement about $\RMS^\mathcal{F}$. 
    The statements about $\RMS$ and for $\SM^\mathcal{F}$ and $\SM$ can be shown with a similar argument. 
\end{proof}

\begin{lemma}
\label{lem:supportControlForFactorStability}
    Let $(X,G)$ be an action, $\mathcal{F}$ be a F{\o}lner sequence, and $(x_n,x_n')_{n\in \mathbb{N}}$ be an asymptotically diagonal sequence. 
    For $n\in \mathbb{N}$ consider    
    $\mu_n\in \mathcal{M}_\mathcal{F}(x_n,x_n')$. 
    For any cluster point $\mu$ of $(\mu_n)_{n\in \mathbb{N}}$ we have $\supp(\mu)\subseteq \RMS^\mathcal{F}(X)$. 
\end{lemma}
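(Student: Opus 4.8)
The plan is to reduce the claim to the measure-theoretic characterisation of $\mathcal{F}$-regional mean sensitivity established in Proposition \ref{pro:characterisationViaMeasures}(\ref{enu:CVM_RMPF}). Fix a pair $(z,z')\in \supp(\mu)$ and let $U$ be an arbitrary open neighbourhood of $(z,z')$ in $X^2$. The goal is to exhibit an asymptotically diagonal sequence together with measures from the appropriate sets $\mathcal{M}_\mathcal{F}(\cdot)$ that witness condition (\ref{enu:CVM_RMPF_limsup}) of that proposition, which then yields $(z,z')\in \RMS^\mathcal{F}(X)$.

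First I would use that $(z,z')\in \supp(\mu)$ to infer $\mu(U)>0$, directly from the definition of the support of a measure. Since $\mu$ is a cluster point of $(\mu_n)_{n\in \mathbb{N}}$ and $\mathcal{M}(X^2)$ is a compact metrizable space, there is a subsequence $(\mu_{n_j})_{j\in \mathbb{N}}$ with $\mu_{n_j}\to \mu$ in the weak*-topology. Because $U$ is open, the Portmanteau theorem gives $\liminf_{j\to \infty}\mu_{n_j}(U)\geq \mu(U)>0$, and in particular $\limsup_{j\to \infty}\mu_{n_j}(U)>0$.

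Next I would assemble the witnessing data. The subsequence $(x_{n_j},x_{n_j}')_{j\in \mathbb{N}}$ is asymptotically diagonal, being a subsequence of the asymptotically diagonal sequence $(x_n,x_n')_{n\in \mathbb{N}}$, and by hypothesis $\mu_{n_j}\in \mathcal{M}_\mathcal{F}(x_{n_j},x_{n_j}')$ for every $j$. Thus the asymptotically diagonal sequence $(x_{n_j},x_{n_j}')_{j\in \mathbb{N}}$ and the measures $\mu_{n_j}$ satisfy $\limsup_{j\to\infty}\mu_{n_j}(U)>0$, which is precisely statement (\ref{enu:CVM_RMPF_limsup}) of Proposition \ref{pro:characterisationViaMeasures}(\ref{enu:CVM_RMPF}) for the pair $(z,z')$ and the neighbourhood $U$. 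As $U$ was arbitrary, the implication (\ref{enu:CVM_RMPF_limsup})$\Rightarrow$(\ref{enu:CVM_RMPF_RMS}) there yields $(z,z')\in \RMS^\mathcal{F}(X)$, and since $(z,z')\in\supp(\mu)$ was arbitrary we conclude $\supp(\mu)\subseteq \RMS^\mathcal{F}(X)$.

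There is no serious obstacle here: once Proposition \ref{pro:characterisationViaMeasures} is available, the argument is a clean transfer of the condition $\mu(U)>0$ along the convergence $\mu_{n_j}\to \mu$. The only points requiring a little care are the correct direction of the Portmanteau inequality (lower semicontinuity of mass on open sets, so that positive mass in the limit forces eventual positive mass along the subsequence) and the bookkeeping that passing to a subsequence preserves asymptotic diagonality and the memberships $\mu_{n_j}\in \mathcal{M}_\mathcal{F}(x_{n_j},x_{n_j}')$. The genuine work lies in Proposition \ref{pro:characterisationViaMeasures} and Lemma \ref{lem:densitiesAndMeasures}, which are assumed here.
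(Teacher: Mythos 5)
Your proposal is correct and follows essentially the same route as the paper: extract a convergent subsequence $\mu_{n_k}\to\mu$, apply the Portmanteau inequality for open sets to get $\limsup\mu_{n_k}(U)\geq\mu(U)>0$, and invoke Proposition \ref{pro:characterisationViaMeasures}(\ref{enu:CVM_RMPF}). The only cosmetic difference is that the paper keeps the full sequence $(x_n,x_n')_{n\in\mathbb{N}}$ and bounds $\limsup_n\mu_n(U)$ from below by $\liminf_k\mu_{n_k}(U)$, whereas you pass to the subsequence outright; both are equally valid.
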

\begin{proof}
    Consider $(x,x')\in \supp(\mu)$ and let $U$ be an open neighbourhood of $(x,x')$. 
    Note that $\mu(U)>0$. 
    Consider a strictly increasing sequence $(n_k)_{k\in \mathbb{N}}$ in $\mathbb{N}$ such that $\mu_{n_k} \to \mu$. 
    From the Portmanteau theorem we observe that 
    \begin{align*}
        \limsup_{n\to \infty}\mu_n(U)
        \geq \liminf_{k\to \infty}\mu_{n_k}(U)
        \geq \mu(U)>0
    \end{align*}
    and conclude $(x,x')\in \RMS^\mathcal{F}(X)$ from Proposition \ref{pro:characterisationViaMeasures}.
\end{proof}

\begin{lemma}
\label{lem:factorStabilityII}
    Let $\pi\colon X\to Y$ be a factor map and let $\mathcal{F}$ be a F{\o}lner sequence.
    \begin{enumerate}
        \item \label{enu:FSIIasymptotical}
            If $\RMS^\mathcal{F}(X)\subseteq R(\pi)$, then $(Y,G)$ is $\mathcal{F}$-mean equicontinuous.
        \item \label{enu:FSIIBanach}
            If $\RMS(X)\subseteq R(\pi)$, then $(Y,G)$ is mean equicontinuous.
    \end{enumerate}
\end{lemma}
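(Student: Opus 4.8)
The plan is to prove assertion (\ref{enu:FSIIasymptotical}) by contraposition, carrying out the whole argument \emph{inside $X$} so that the hypothesis $\RMS^\mathcal{F}(X)\subseteq R(\pi)$ can actually be used, and then to deduce (\ref{enu:FSIIBanach}) quickly from (\ref{enu:FSIIasymptotical}). So suppose $(Y,G)$ is not $\mathcal{F}$-mean equicontinuous. Then some point $y_0\in Y$ fails to be $\mathcal{F}$-mean equicontinuous, which yields $\epsilon>0$ and a sequence $y_n\to y_0$ in $Y$ with $D_\mathcal{F}(y_0,y_n)\geq \epsilon$, where $D_\mathcal{F}$ denotes the $\mathcal{F}$-mean pseudometric on $Y$. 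First I would lift this configuration: choosing preimages $x_n\in\pi^{-1}(y_n)$ and passing to a subsequence using compactness of $X$, I may assume $x_n\to x_0$ for some $x_0$ with $\pi(x_0)=y_0$, so that $(x_0,x_n)_{n\in\mathbb{N}}$ is an asymptotically diagonal sequence in $X^2$.

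Next I would transfer the failure of mean equicontinuity into a density statement on $X^2$. Consider the continuous pseudometric $\tilde d:=d_Y\circ(\pi\times\pi)$ on $X^2$ and the open set $\tilde W:=(\pi\times\pi)^{-1}(W)$, where $W:=\{(w,w')\in Y^2;\, d_Y(w,w')>\epsilon/2\}$. By $G$-equivariance of $\pi$ we have $\tilde d(g.x_0,g.x_n)=d_Y(g.y_0,g.y_n)$ for all $g$, so $\limsup_k F_k^*\tilde d(x_0,x_n)=D_\mathcal{F}(y_0,y_n)\geq\epsilon$. Splitting the Cesàro integral according to whether $(g.x_0,g.x_n)\in\tilde W$ and bounding $\tilde d$ by $\epsilon/2$ off $\tilde W$ and by $M:=\operatorname{diam}(Y)$ everywhere gives, for every $n$,
\[\epsilon\leq \tfrac{\epsilon}{2}+M\cdot\upperAdens_\mathcal{F}(G_{\tilde W}(x_0,x_n)),\]
hence $\upperAdens_\mathcal{F}(G_{\tilde W}(x_0,x_n))\geq \epsilon/(2M)=:c>0$. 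The decisive point is that $\overline{\tilde W}\subseteq(\pi\times\pi)^{-1}(\overline W)$ is disjoint from $R(\pi)$: any pair in $R(\pi)$ maps into $\Delta_Y$, while $\overline W\subseteq\{d_Y\geq\epsilon/2\}$ avoids $\Delta_Y$.

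I would then pass to invariant measures on $X^2$. For each $n$, applying Lemma~\ref{lem:densitiesAndMeasures}(\ref{enu:DAMclosedAsymptotic}) to the action $(X^2,G)$ with the closed set $A=\overline{\tilde W}$ provides $\mu_n\in\mathcal{M}_\mathcal{F}(x_0,x_n)$ with $\mu_n(\overline{\tilde W})\geq\upperAdens_\mathcal{F}(G_{\overline{\tilde W}}(x_0,x_n))\geq c$. Taking a cluster point $\mu$ of $(\mu_n)_{n\in\mathbb{N}}$, Lemma~\ref{lem:supportControlForFactorStability} gives $\supp(\mu)\subseteq\RMS^\mathcal{F}(X)$, while the Portmanteau theorem (for the closed set $\overline{\tilde W}$) gives $\mu(\overline{\tilde W})\geq c>0$, so $\supp(\mu)\cap\overline{\tilde W}\neq\emptyset$. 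Any point $(\bar x,\bar x')$ of this intersection lies in $\RMS^\mathcal{F}(X)$ but, being in $\overline{\tilde W}$, not in $R(\pi)$, contradicting $\RMS^\mathcal{F}(X)\subseteq R(\pi)$. This proves (\ref{enu:FSIIasymptotical}).

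For (\ref{enu:FSIIBanach}) I would argue that $\RMS(X)\subseteq R(\pi)$ forces $\mathcal{F}'$-mean equicontinuity of $(Y,G)$ for \emph{every} Følner sequence $\mathcal{F}'$: indeed $\RMS^{\mathcal{F}'}(X)\subseteq\RMS(X)\subseteq R(\pi)$ by Proposition~\ref{pro:basicPropertiesRMSandSM}(\ref{enu:BPRMSSM_inclusion}), so part (\ref{enu:FSIIasymptotical}) applies with $\mathcal{F}'$ in place of $\mathcal{F}$. Since this holds for all $\mathcal{F}'$, Lemma~\ref{lem:meanEquicontinuityCharacterisation} yields that $(Y,G)$ is mean equicontinuous. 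The main obstacle I anticipate is conceptual rather than computational: one must avoid the temptation to construct a nondiagonal regionally mean sensitive pair in $Y$ (which would be useless, as the hypothesis constrains $X$) and instead realise the entire construction over $X$ via the pullback pseudometric $\tilde d$ and the set $\tilde W$, the key leverage being precisely that $\overline{\tilde W}$ is disjoint from $R(\pi)$.
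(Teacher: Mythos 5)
Your proof is correct, and while it shares the paper's overall skeleton (contraposition, lifting the bad sequence to an asymptotically diagonal pair $(x_0,x_n)$ in $X^2$, and invoking Lemma~\ref{lem:supportControlForFactorStability} to place $\supp(\mu)$ inside $\RMS^\mathcal{F}(X)\subseteq R(\pi)$), the way you produce the measures and extract the contradiction is genuinely different. The paper builds the $\mu_n$ by hand as weak* limits of Ces\`aro averages $(F_k^{(n)})_*\delta_{(x_n,x)}$ along subsequences $\mathcal{F}^{(n)}$ of $\mathcal{F}$ chosen to realise $D_\mathcal{F}(y_n,y)$, pushes the limit measure forward to $Y^2$ via $\phi_*=(\pi\times\pi)_*$, and gets the contradiction from $\nu(d)=\delta>0$ versus $\supp(\nu)\subseteq\Delta_Y$ forcing $\nu(d)=0$. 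You instead first convert the lower bound $D_\mathcal{F}(y_0,y_n)\geq\epsilon$ into a density bound $\upperAdens_\mathcal{F}(G_{\tilde W}(x_0,x_n))\geq\epsilon/(2M)$ via the standard Ces\`aro splitting, then obtain the $\mu_n$ wholesale from Lemma~\ref{lem:densitiesAndMeasures}(\ref{enu:DAMclosedAsymptotic}), and conclude by noting that $\mu(\overline{\tilde W})>0$ forces $\supp(\mu)$ to meet the closed set $\overline{\tilde W}$, which is disjoint from $R(\pi)$. Your route avoids the double-limit bookkeeping and the pushforward to $Y^2$ entirely, at the price of the (correct) observations that $\limsup$ is subadditive in the splitting step and that $\overline{\tilde W}\subseteq(\pi\times\pi)^{-1}(\{d_Y\geq\epsilon/2\})$ misses $R(\pi)$; it is arguably closer in spirit to how the paper proves Proposition~\ref{pro:characterisationViaMeasures}. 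Part (\ref{enu:FSIIBanach}) is handled identically in both arguments.
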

\begin{proof}
(\ref{enu:FSIIasymptotical}):
    If $Y$ is not $\mathcal{F}$-mean equicontinuous there exists $y\in Y$ that is not $\mathcal{F}$-mean equicontinuous. 
    For such $y$ we find $\delta>0$ and a sequence $(y_n)_{n\in \mathbb{N}}$ in $Y$ with $y_n\to y$ such that
    $D_\mathcal{F}(y_n,y)\overset{n}{\to} \delta$ and 
    $D_\mathcal{F}(y_n,y)\geq \delta$ for all $n\in \mathbb{N}$. 

    For $n\in \mathbb{N}$ we choose $x_n\in X$ with $\pi(x_n)=y_n$ and a subsequence $\mathcal{F}^{(n)}=(F_k^{(n)})_{k\in \mathbb{N}}$ of $\mathcal{F}$ such that 
    \[\lim_{k\to \infty} (F_k^{(n)})^*d(y_n,y)=D_\mathcal{F}(y_n,y).\] 
    We denote
    $\mu_k^{(n)}:=(F_k^{(n)})_*\delta_{(x_n,x)}$. 
    By possibly passing to a further subsequence of $\mathcal{F}^{(n)}$ we assume w.l.o.g.\ that
    there exists $\mu^{(n)}\in \mathcal{M}_G(X^2)$ with $\mu_k^{(n)}\overset{k}{\to} \mu^{(n)}$.
    Furthermore, by possibly considering a subsequence of $(y_n)_{n\in \mathbb{N}}$ we assume w.l.o.g.\ that $x_n\overset{n}{\to} x$ in $X$, and that $\mu^{(n)}\overset{n}{\to} \mu$ in $\mathcal{M}_G(X^2)$. 
    Note that we have $\pi(x)=y$. 

    Denote by $\phi\colon X^2\to Y^2$ the factor map given by $(x_1,x_2)\mapsto (\pi(x_1),\pi(x_2))$.
    For $n,k\in \mathbb{N}$ we denote  
    $\nu:=\phi_*\mu$, 
    $\nu^{(n)}:=\phi_*\mu^{(n)}$
    and 
    $\nu_k^{(n)}:=\phi_*\mu_k^{(n)}$.
    Since $\phi_*\colon \mathcal{M}(X^2)\to \mathcal{M}(Y^2)$ is affine and continuous it follows that
    \begin{align*}
        (F_k^{(n)})_*\delta_{(y_n,y)}
        &=(F_k^{(n)})_*\phi_*\delta_{(x_n,x)}
        =\phi_*(F_k^{(n)})_*\delta_{(x_n,x)}
        =\nu_k^{(n)}\overset{k}{\to} \nu^{(n)}\overset{n}{\to} \nu.
    \end{align*}
    Furthermore, since $(x_n,x)_{n\in \mathbb{N}}$ is an asymptotically diagonal sequence with 
    \begin{align*}
        \mu^{(n)}\in \mathcal{M}_{\mathcal{F}^{(n)}}(x_n,x)\subseteq \mathcal{M}_{\mathcal{F}}(x_n,x)
    \end{align*}
    for all $n\in \mathbb{N}$, we observe from $\mu^{(n)}\overset{n}{\to} \mu$ and Lemma \ref{lem:supportControlForFactorStability} that 
    \begin{align*}
        \supp(\mu)\subseteq \RMS^{\mathcal{F}}(X)\subseteq R(\pi).  
    \end{align*}
    We thus have 
    \begin{align*}
        \supp(\nu)=\phi(\supp(\mu))\subseteq \phi(R(\pi))=\Delta_Y.
    \end{align*}
    Since $d\equiv 0$ on $\Delta_Y$ we have the following contradiction
    \begin{align*}
        0
        &=\nu(d)
        =\lim_{n\to \infty} \nu^{(n)}(d)
        =\lim_{n\to \infty} \lim_{k\to \infty} \nu_k^{(n)}(d)
        =\lim_{n\to \infty} \lim_{k\to \infty} (F_k^{(n)})_*\delta_{(y_n,y)}(d)\\
        &=\lim_{n\to \infty} \lim_{k\to \infty} (F_k^{(n)})^*d(y_n,y)
        =\lim_{n\to \infty} D_\mathcal{F}(y_n,y)=\delta>0.
    \end{align*}        

(\ref{enu:FSIIBanach}):
    Given a F{\o}lner sequence $\mathcal{F}$ we have  
    $\RMS^\mathcal{F}(X)\subseteq \RMS(X)\subseteq R(\pi)$ and (\ref{enu:FSIIasymptotical}) yields that 
    $(X,G)$ is $\mathcal{F}$-mean equicontinuous. 
    This shows that $(X,G)$ is $\mathcal{F}$-mean equicontinuous for all F{\o}lner sequences $\mathcal{F}$ and Lemma \ref{lem:meanEquicontinuityCharacterisation} implies that $(X,G)$ is mean equicontinuous. 
\end{proof}

\begin{lemma}
\label{lem:DFviaMeasures}
    Let $(X,G)$ be an action and $\mathcal{F}$ be a F{\o}lner sequence. For any $(x,x')\in X^2$ we have 
    $D_\mathcal{F}(x,x')=\sup_{\mu\in \mathcal{M}_\mathcal{F}(x,x')}\mu(d).$
\end{lemma}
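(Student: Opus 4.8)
The plan is to reinterpret $D_\mathcal{F}(x,x')$ as the limit superior of a fixed weak*-continuous functional evaluated along the sequence $\big((F_n)_*\delta_{(x,x')}\big)_{n\in\mathbb{N}}$ in $\mathcal{M}(X^2)$, and to identify $\mathcal{M}_\mathcal{F}(x,x')$ as the set of cluster points of this sequence. The statement then reduces to the elementary fact that the limit superior of a continuous function along a sequence in a compact metric space equals its supremum over the set of cluster points.

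First I would record that the metric $d$ is an element of $C(X^2)$ and consider the functional $\Phi\colon \mathcal{M}(X^2)\to\mathbb{R}$, $\Phi(\mu):=\mu(d)$, which is weak*-continuous since $d$ is bounded and continuous. Applying the duality $(F_*\mu)(f)=\mu(F^*f)$ to the action on $X^2$ with $\mu=\delta_{(x,x')}$ and $f=d$, I obtain for each $n\in\mathbb{N}$ that
\[F_n^*d(x,x')=\delta_{(x,x')}(F_n^*d)=\big((F_n)_*\delta_{(x,x')}\big)(d)=\Phi\big((F_n)_*\delta_{(x,x')}\big).\]
Writing $\nu_n:=(F_n)_*\delta_{(x,x')}$, it follows that $D_\mathcal{F}(x,x')=\limsup_{n\to\infty}\Phi(\nu_n)$, while by definition $\mathcal{M}_\mathcal{F}(x,x')$ is exactly the set of cluster points of $(\nu_n)_{n\in\mathbb{N}}$.

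It then remains to show $\limsup_{n\to\infty}\Phi(\nu_n)=\sup_{\mu\in\mathcal{M}_\mathcal{F}(x,x')}\Phi(\mu)$. For the inequality ``$\geq$'', I would take any cluster point $\mu\in\mathcal{M}_\mathcal{F}(x,x')$, choose a subsequence $\nu_{n_k}\to\mu$, and use continuity of $\Phi$ to get $\Phi(\mu)=\lim_{k\to\infty}\Phi(\nu_{n_k})\leq\limsup_{n\to\infty}\Phi(\nu_n)$; taking the supremum over $\mu$ yields the claim. For ``$\leq$'', I would choose a subsequence along which $\Phi(\nu_{n_k})\to D_\mathcal{F}(x,x')$, and then invoke the compactness and metrizability of $\mathcal{M}(X^2)$ to extract a further subsequence converging to some $\mu^\ast$; this $\mu^\ast$ is a cluster point of $(\nu_n)_{n\in\mathbb{N}}$, hence lies in $\mathcal{M}_\mathcal{F}(x,x')$, and continuity of $\Phi$ forces $\Phi(\mu^\ast)=D_\mathcal{F}(x,x')$, so the supremum is in fact attained.

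The argument is essentially routine; the only point requiring care is the ``$\leq$'' direction, where one must use the compactness of $\mathcal{M}(X^2)$ to pass to a convergent subsubsequence in order to realise the limit superior by an actual cluster point, so that the supremum on the right-hand side is attained rather than merely approximated.
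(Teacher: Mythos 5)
Your proof is correct and follows the same route as the paper: rewrite $F_n^*d(x,x')$ as $\bigl((F_n)_*\delta_{(x,x')}\bigr)(d)$ via the duality $(F_*\mu)(f)=\mu(F^*f)$, and then identify the limit superior with the supremum of the weak*-continuous functional $\mu\mapsto\mu(d)$ over the cluster points $\mathcal{M}_\mathcal{F}(x,x')$. The paper compresses this into a single displayed chain of equalities, leaving the compactness argument for the final step implicit, which you have simply spelled out.
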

\begin{proof}
    Denoting $\mathcal{F}=(F_n)_{n\in \mathbb{N}}$ we observe 
    \begin{align*}
        D_\mathcal{F}(x,x')
        &= \limsup_{n\to \infty} F_n^*d(x,x')
        =\limsup_{n\to \infty} (F_n)_*\delta_{(x,x')}(d)=\sup_{\mu\in \mathcal{M}_\mathcal{F}(x,x')}\mu(d).
        \qedhere
    \end{align*}
\end{proof}

\begin{proposition}
\label{pro:meanEquicontinuityViaRMS}
    Let $(X,G)$ be an action and $\mathcal{F}=(F_n)$ be a F{\o}lner sequence.
    \begin{enumerate}
        \item \label{enu:MEVRMS_F}
        $(X,G)$ is $\mathcal{F}$-mean equicontinuous if and only if $\RMS^\mathcal{F}(X)\subseteq \Delta_X$. 
        \item \label{enu:MEVRMS}
        $(X,G)$ is mean equicontinuous if and only if $\RMS(X)\subseteq \Delta_X$. 
    \end{enumerate}
\end{proposition}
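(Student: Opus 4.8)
The plan is to establish each equivalence by separating the two implications, obtaining the backward direction directly from the factor-stability result and the forward direction from the measure-theoretic characterisation together with the continuity of the mean pseudometric.

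For the backward implications I would apply Lemma \ref{lem:factorStabilityII} to the identity factor map $\mathrm{id}\colon X\to X$, for which $R(\mathrm{id})=\Delta_X$. If $\RMS^\mathcal{F}(X)\subseteq \Delta_X=R(\mathrm{id})$, then part (\ref{enu:FSIIasymptotical}) of that lemma immediately gives that $(X,G)$ is $\mathcal{F}$-mean equicontinuous; similarly, if $\RMS(X)\subseteq \Delta_X$, then part (\ref{enu:FSIIBanach}) gives mean equicontinuity. This settles the `$\Leftarrow$' directions of both (\ref{enu:MEVRMS_F}) and (\ref{enu:MEVRMS}).

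For the forward implication of (\ref{enu:MEVRMS_F}) I would argue by contradiction. Assuming $(X,G)$ is $\mathcal{F}$-mean equicontinuous but that there is a pair $(x,x')\in \RMS^\mathcal{F}(X)$ with $x\neq x'$, the fact that $d(x,x')>0$ lets me fix an open neighbourhood $U$ of $(x,x')$ and some $\eta>0$ with $d(y,y')\geq \eta$ for all $(y,y')\in U$. Applying Proposition \ref{pro:characterisationViaMeasures}(\ref{enu:CVM_RMPF}), namely the implication (\ref{enu:CVM_RMPF_RMS})$\Rightarrow$(\ref{enu:CVM_RMPF_inf}), produces an asymptotically diagonal sequence $(x_n,x_n')_{n\in\mathbb{N}}$ and measures $\mu_n\in \mathcal{M}_\mathcal{F}(x_n,x_n')$ with $c:=\inf_{n}\mu_n(U)>0$. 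Then $\mu_n(d)\geq \eta\,\mu_n(U)\geq \eta c$, and Lemma \ref{lem:DFviaMeasures} yields $D_\mathcal{F}(x_n,x_n')\geq \mu_n(d)\geq \eta c$ for every $n$. On the other hand, since $d(x_n,x_n')\to 0$, compactness of $X^2$ lets me pass to a subsequence with $(x_n,x_n')\to(p,p)$ on the diagonal; as $D_\mathcal{F}\in C(X^2)$ and $D_\mathcal{F}(p,p)=0$, this forces $D_\mathcal{F}(x_n,x_n')\to 0$ along that subsequence, contradicting the uniform lower bound $\eta c>0$.

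For the forward implication of (\ref{enu:MEVRMS}) I would reduce to (\ref{enu:MEVRMS_F}). If $(X,G)$ is mean equicontinuous, then Lemma \ref{lem:meanEquicontinuityCharacterisation} makes it $\mathcal{F}$-mean equicontinuous for every F\o lner sequence, and Theorem \ref{the:RMSInSMandFolnerRealisation}(\ref{enu:RMSISMAFR_realization}) supplies a F\o lner sequence $\mathcal{F}$ with $\RMS(X)=\RMS^\mathcal{F}(X)$; applying part (\ref{enu:MEVRMS_F}) to this $\mathcal{F}$ gives $\RMS^\mathcal{F}(X)\subseteq\Delta_X$, hence $\RMS(X)\subseteq\Delta_X$. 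The main obstacle is the forward direction of (\ref{enu:MEVRMS_F}): the delicate point is upgrading regional mean sensitivity into a genuinely \emph{uniform-in-$n$} lower bound on $\mu_n(d)$, which is precisely what the infimum reformulation in the measure characterisation provides, so that continuity of $D_\mathcal{F}$ along the asymptotically diagonal sequence can be contradicted.
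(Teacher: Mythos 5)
Your proposal is correct and follows essentially the same route as the paper: the backward directions via Lemma \ref{lem:factorStabilityII} applied to the identity factor map, the forward direction of (\ref{enu:MEVRMS_F}) via the infimum form of Proposition \ref{pro:characterisationViaMeasures} combined with Lemma \ref{lem:DFviaMeasures} to contradict continuity of $D_\mathcal{F}$ along an asymptotically diagonal sequence, and (\ref{enu:MEVRMS}) by reduction through Theorem \ref{the:RMSInSMandFolnerRealisation}(\ref{enu:RMSISMAFR_realization}) and Lemma \ref{lem:meanEquicontinuityCharacterisation}. The only cosmetic differences are your explicit subsequence/compactness justification that $D_\mathcal{F}(x_n,x_n')\to 0$ and your direct use of Lemma \ref{lem:factorStabilityII}(\ref{enu:FSIIBanach}) for the backward half of (\ref{enu:MEVRMS}), neither of which changes the substance.
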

\begin{proof}
(\ref{enu:MEVRMS_F}):
    Assume that $\RMS^\mathcal{F}(X)\subseteq\Delta_X$. 
    Consider the trivial factor map $\pi\colon X\to X$ with $x\mapsto x$. 
    We have $\RMS^\mathcal{F}(X)\subseteq \Delta_X=R(\pi)$ and observe $(X,G)$ to be $\mathcal{F}$-mean equicontinuous from Lemma \ref{lem:factorStabilityII}. 

    For the converse, consider $(x,x')\in \RMS^\mathcal{F}(X)$ and assume that $x\neq x'$. 
    There exists $\epsilon>0$ such that $U:=d^{-1}((\epsilon,\infty))$ is an open neighbourhood of $(x,x')$. 
    Since $(x,x')\in \RMS^\mathcal{F}(X)$ we know from Proposition \ref{pro:characterisationViaMeasures} that there exists an asymptotically diagonal sequence $(x_n,x_n')_{n\in \mathbb{N}}$ and for each $n\in \mathbb{N}$ some $\mu_n\in \mathcal{M}_\mathcal{F}(x_n,x_n')$ such that 
    $c:=\inf_{n\in \mathbb{N}} \mu_n(U)>0.$
    For each $n\in \mathbb{N}$ we thus observe from Lemma \ref{lem:DFviaMeasures} that  
    \begin{align*}
        D_\mathcal{F}(x_n,x_n')=\sup_{\mu\in \mathcal{M}_\mathcal{F}(x_n,x_n')}\mu(d)
        \geq \mu_n(d)
        \geq \epsilon\mu_n(U)
        \geq c\epsilon>0. 
    \end{align*}
    However, since $(X,G)$ is $\mathcal{F}$-mean equicontinuous, the condition $d(x_n,x_n')\to 0$ implies $D_\mathcal{F}(x_n,x_n')\to 0$, a contradiction.

(\ref{enu:MEVRMS}):
    Recall that $(X,G)$ is mean equicontinuous if and only if it is $\mathcal{F}$-mean equicontinuous for all F{\o}lner sequences $\mathcal{F}$.
    Furthermore, from Theorem \ref{the:RMSInSMandFolnerRealisation} we know that $\RMS(X)=\RMS^\mathcal{F}(X)$ for some F{\o}lner sequence $\mathcal{F}$.
    We thus conclude (\ref{enu:MEVRMS}) from (\ref{enu:MEVRMS_F}). 
\end{proof}

\begin{theorem}
\label{the:meanEquicontinuityFactorCharacterisation}
    Let $\pi\colon X\to Y$ be a factor map and $\mathcal{F}$ be a F{\o}lner sequence. 
    \begin{enumerate}
        \item \label{enu:MEC_factor_F}
        $(Y,G)$ is $\mathcal{F}$-mean equicontinuous if and only if $\RMS^\mathcal{F}(X)\subseteq R(\pi)$. 

        \item \label{enu:MEC_factor}
        $(Y,G)$ is mean equicontinuous if and only if $\RMS(X)\subseteq R(\pi)$. 
    \end{enumerate}
\end{theorem}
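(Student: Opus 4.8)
The plan is to treat the two statements in parallel, since each is a biconditional whose two implications are handled by different pieces of machinery already in place. For the ``if'' directions I would simply invoke Lemma \ref{lem:factorStabilityII}: its part (\ref{enu:FSIIasymptotical}) states precisely that $\RMS^\mathcal{F}(X)\subseteq R(\pi)$ forces $(Y,G)$ to be $\mathcal{F}$-mean equicontinuous, and part (\ref{enu:FSIIBanach}) gives the analogous implication for $\RMS(X)$ and mean equicontinuity. Thus no further work is required there, and only the ``only if'' directions remain.

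For these I would combine the factor-stability of the relations with the diagonal characterisation of mean equicontinuity. Concretely, suppose $(Y,G)$ is $\mathcal{F}$-mean equicontinuous. By Proposition \ref{pro:meanEquicontinuityViaRMS}(\ref{enu:MEVRMS_F}) this is equivalent to $\RMS^\mathcal{F}(Y)\subseteq \Delta_Y$. Now take an arbitrary pair $(x,x')\in \RMS^\mathcal{F}(X)$. By Lemma \ref{lem:factorStabilityI} its image $(\pi(x),\pi(x'))$ lies in $\RMS^\mathcal{F}(Y)$, hence in $\Delta_Y$, which says precisely $\pi(x)=\pi(x')$, i.e.\ $(x,x')\in R(\pi)$. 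Since the pair was arbitrary, $\RMS^\mathcal{F}(X)\subseteq R(\pi)$, establishing (\ref{enu:MEC_factor_F}). The proof of (\ref{enu:MEC_factor}) is identical with $\RMS^\mathcal{F}$ replaced by $\RMS$, invoking Proposition \ref{pro:meanEquicontinuityViaRMS}(\ref{enu:MEVRMS}) together with the $\RMS$-version of Lemma \ref{lem:factorStabilityI}.

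The argument is essentially bookkeeping once these three ingredients are assembled, so I do not anticipate a genuine obstacle. The only point requiring mild care is to keep the roles of $X$ and $Y$ straight: mean equicontinuity is asserted of the factor $Y$, so one applies Proposition \ref{pro:meanEquicontinuityViaRMS} \emph{downstairs} on $Y$ (yielding $\RMS^{(\mathcal{F})}(Y)\subseteq \Delta_Y$), whereas the statement about $R(\pi)$ lives \emph{upstairs} on $X$; the bridge between the two levels is exactly the factor-stability Lemma \ref{lem:factorStabilityI}. It is worth noting that this theorem is the natural generalisation of Proposition \ref{pro:meanEquicontinuityViaRMS}, which is recovered by taking $\pi$ to be the identity map so that $R(\pi)=\Delta_X$.
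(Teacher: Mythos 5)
Your proposal is correct and follows exactly the paper's own argument: Lemma \ref{lem:factorStabilityII} for the ``if'' directions, and Lemma \ref{lem:factorStabilityI} combined with Proposition \ref{pro:meanEquicontinuityViaRMS} applied on $Y$ for the ``only if'' directions. No issues.
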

\begin{proof}
(\ref{enu:MEC_factor_F}): 
    From Lemma \ref{lem:factorStabilityII} we already know that whenever $\RMS^\mathcal{F}(X)\subseteq R(\pi)$, then $(Y,G)$ is $\mathcal{F}$-mean equicontinuous. 
    For the converse assume that $(Y,G)$ is $\mathcal{F}$-mean equicontinuous and consider $(x,x')\in \RMS^\mathcal{F}(X)$. 
    From Lemma \ref{lem:factorStabilityI} and Proposition \ref{pro:meanEquicontinuityViaRMS} we observe that 
    $(\pi(x),\pi(x'))\in \RMS^\mathcal{F}(Y)\subseteq\Delta_Y$ and hence $(x,x')\in R(\pi)$. 
    This shows $\RMS^\mathcal{F}(X)\subseteq R(\pi)$.     
    
(\ref{enu:MEC_factor}): 
    This follows from a similar argument. 
\end{proof}

\begin{theorem}    
\label{the:maximalMeanEquicontinuousFactor}
    Let $(X,G)$ be an action and let $\mathcal{F}$ be a F{\o}lner sequence.
    \begin{enumerate}
        \item \label{enu:MMEF_F}
        $(X\big/\icerhull{\RMS^\mathcal{F}(X)}, G)$ is the maximal $\mathcal{F}$-mean equicontinuous factor of $(X,G)$.
        
        \item \label{enu:MMEF}
        $(X\big/\icerhull{\RMS(X)}, G)$ is the maximal mean equicontinuous factor of $(X,G)$.  
    \end{enumerate}
\end{theorem}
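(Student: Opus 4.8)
The plan is to abbreviate $R:=\icerhull{\RMS^\mathcal{F}(X)}$ and to prove $R=\RME^\mathcal{F}(X)$; part (\ref{enu:MMEF_F}) then follows at once, since $\RME^\mathcal{F}(X)$ is by definition the fibre relation of the maximal $\mathcal{F}$-mean equicontinuous factor, so that $(X/R,G)$ and $(X_{\text{$\mathcal{F}$-me}},G)$ are conjugated. The whole argument rests on Theorem \ref{the:meanEquicontinuityFactorCharacterisation}(\ref{enu:MEC_factor_F}) together with the standard correspondence between factors of $(X,G)$ and closed invariant equivalence relations on $X$, under which a quotient $X/S$ is a factor of $X/S'$ precisely when $S'\subseteq S$.

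For the inclusion $R\subseteq \RME^\mathcal{F}(X)$ I would apply Theorem \ref{the:meanEquicontinuityFactorCharacterisation}(\ref{enu:MEC_factor_F}) to the factor map $\pi_{\text{$\mathcal{F}$-me}}\colon X\to X_{\text{$\mathcal{F}$-me}}$: since the target is $\mathcal{F}$-mean equicontinuous, this yields $\RMS^\mathcal{F}(X)\subseteq R(\pi_{\text{$\mathcal{F}$-me}})=\RME^\mathcal{F}(X)$. As $\RME^\mathcal{F}(X)$ is a closed invariant equivalence relation and $R$ is the smallest such relation containing $\RMS^\mathcal{F}(X)$, we obtain $R\subseteq \RME^\mathcal{F}(X)$. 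For the reverse inclusion, I would use that $R$ is itself a closed invariant equivalence relation, so the quotient map $\pi\colon X\to X/R$ is a factor map with $R(\pi)=R\supseteq \RMS^\mathcal{F}(X)$; Theorem \ref{the:meanEquicontinuityFactorCharacterisation}(\ref{enu:MEC_factor_F}) then shows that $(X/R,G)$ is $\mathcal{F}$-mean equicontinuous. By the maximality of $(X_{\text{$\mathcal{F}$-me}},G)$ this factor must factor through $X_{\text{$\mathcal{F}$-me}}$, which translates into $\RME^\mathcal{F}(X)\subseteq R$. Combining the two inclusions gives $R=\RME^\mathcal{F}(X)$, as desired.

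Part (\ref{enu:MMEF}) would be proved verbatim, replacing $\RMS^\mathcal{F}$ by $\RMS$ and $\RME^\mathcal{F}$ by $\RME$, and invoking Theorem \ref{the:meanEquicontinuityFactorCharacterisation}(\ref{enu:MEC_factor}) in place of part (\ref{enu:MEC_factor_F}); here one additionally relies on the existence of the maximal mean equicontinuous factor recorded in the introduction. I do not expect a genuine obstacle, as all the analytic content has already been absorbed into Theorem \ref{the:meanEquicontinuityFactorCharacterisation} (and, through it, Proposition \ref{pro:meanEquicontinuityViaRMS}). The only point demanding care is the bookkeeping of the factor correspondence—namely, that any closed invariant equivalence relation $R$ with $\RMS^\mathcal{F}(X)\subseteq R$ yields an $\mathcal{F}$-mean equicontinuous quotient, and that maximality of $X_{\text{$\mathcal{F}$-me}}$ is faithfully reflected by the inclusion $\RME^\mathcal{F}(X)\subseteq R$ of fibre relations.
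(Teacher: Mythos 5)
Your proposal is correct and follows essentially the same route as the paper: both directions rest on Theorem \ref{the:meanEquicontinuityFactorCharacterisation} in exactly the way you use it, the only difference being that the paper verifies the universal property of the quotient directly while you equate $\icerhull{\RMS^{\mathcal{F}}(X)}$ with the fibre relation of the already-constructed maximal factor. This is a purely presentational difference, and your handling of the factor correspondence is accurate.
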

\begin{proof}
(\ref{enu:MMEF_F}): 
    By Theorem \ref{the:meanEquicontinuityFactorCharacterisation},  
    $(X\big/\icerhull{\RMS^\mathcal{F}(X)}, G)$ is mean equicontinuous. 
    Furthermore, for any mean equicontinuous factor $(Y,G)$ of $(X,G)$ given by a factor map $\pi\colon X\to Y$ we observe from Theorem \ref{the:meanEquicontinuityFactorCharacterisation} that $\RMS^\mathcal{F}(X)\subseteq R(\pi)$. 
    Since $R(\pi)$ is a closed invariant equivalence relation we have 
    $\icerhull{ \RMS^\mathcal{F}(X)}\subseteq R(\pi)$ 
    and it follows that $(Y,G)$ is a factor of 
    $(X\big/\icerhull{\RMS^\mathcal{F}(X)}, G)$. 

(\ref{enu:MMEF}):
    This follows from a similar argument. 
\end{proof}

\subsection{The maximal mean equicontinuous factor via $\SM$ and $\SM^{\mathcal{F}}$}

As presented in the introduction the quotient 
$X\big/ \icerhull{\SM^{\mathcal{F}}(X)}$ 
does not always give the maximal $\mathcal{F}$-mean equicontinuous factor.
We next show that in contrast the relation $\SM(X)$ can always be used to describe the maximal mean equicontinuous factor. 
Furthermore, we provide additional assumptions under which $\SM^{\mathcal{F}}(X)$ yields a description of the maximal $\mathcal{F}$-mean equicontinuous factor.

For this we will need the following well-known result. 
It is a direct consequence of
\cite[Theorem~1.2]{fuhrmann2022structure} and
\cite[Theorem~3.12]{fuhrmann2025continuity}.
Related statements can be found in
\cite[Theorem~4.9]{downarowicz2020when} and
\cite[Theorem~1.6]{xu2024weak}.

\begin{lemma}
\label{lem:uniformity}
    Whenever $(X,G)$ is mean equicontinuous, then for any F{\o}lner sequence $\mathcal{F}=(F_n)_{n\in \mathbb{N}}$ we have $F_n^*d\to D_\mathcal{F}$ in $C(X^2)$ as $n\to\infty$. 
\end{lemma}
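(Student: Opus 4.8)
The plan is to first translate the statement into the language of invariant measures and then reduce it to the structure theory of mean equicontinuous actions, isolating the genuinely hard input. By Lemma~\ref{lem:DFviaMeasures} we have $F_n^{*}d(x,x')=(F_n)_{*}\delta_{(x,x')}(d)$ and $D_\mathcal{F}(x,x')=\sup_{\mu\in\mathcal{M}_\mathcal{F}(x,x')}\mu(d)$, so what must be shown is that the defining $\limsup$ is in fact a genuine limit, attained uniformly in $(x,x')$. Note that $D_\mathcal{F}\in C(X^2)$ is already available: mean equicontinuity yields $\mathcal{F}$-mean equicontinuity for every F{\o}lner sequence by Lemma~\ref{lem:meanEquicontinuityCharacterisation}, and $\mathcal{F}$-mean equicontinuity is precisely $D_\mathcal{F}\in C(X^2)$. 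Thus the problem decomposes into two pieces: a uniformity (so that the convergence takes place in $C(X^2)$ rather than merely pointwise) and the pointwise identity $\liminf_n F_n^{*}d=\limsup_n F_n^{*}d$.

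For the uniformity I would proceed by a Dini-type argument. The key would be a \emph{tail equicontinuity near the diagonal}: for every $\epsilon>0$ there exist $\delta>0$ and $N\in\mathbb{N}$ with $F_n^{*}d(x,y)\le\epsilon$ whenever $d(x,y)<\delta$ and $n\ge N$. Granting this, the triangle-type bound $|F_n^{*}d(x,x')-F_n^{*}d(y,y')|\le F_n^{*}d(x,y)+F_n^{*}d(x',y')$ makes the tail family $\{F_n^{*}d\}_{n\ge N}$ uniformly equicontinuous, so that $b_N:=\sup_{n\ge N}F_n^{*}d$ and $c_N:=\inf_{n\ge N}F_n^{*}d$ are continuous; since $b_N\downarrow\limsup_n F_n^{*}d=D_\mathcal{F}$ and $c_N\uparrow\liminf_n F_n^{*}d$, with all limits continuous, Dini's theorem on the compact space $X^2$ upgrades both to uniform convergence. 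This sandwiches $F_n^{*}d$ uniformly between two continuous functions and reduces the whole statement to the pointwise equality $\liminf_n F_n^{*}d=D_\mathcal{F}$, i.e.\ to the assertion that every $\mu\in\mathcal{M}_\mathcal{F}(x,x')$ assigns $d$ the common value $D_\mathcal{F}(x,x')$.

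The hard part will be both the tail equicontinuity and this last equality, neither of which follows from the tools developed so far. The obstruction is structural: mean equicontinuity is equivalent to $\RMS(X)\subseteq\Delta_X$ by Proposition~\ref{pro:meanEquicontinuityViaRMS}, but this controls only the Banach-density relation $\RMS$, whereas the tail equicontinuity is exactly the absence of off-diagonal \emph{regionally joint mean sensitive} pairs (a single-scale positive-density phenomenon, captured by $\SM$ via Proposition~\ref{pro:characterisationViaMeasures}), and $\SM(X)\subseteq\Delta_X$ is itself a deep consequence of mean equicontinuity rather than an elementary one. Accordingly, the route I would actually take is through the structure theory of mean equicontinuous actions: one realises $D_\mathcal{F}$ as induced by an honest metric on the maximal equicontinuous factor, on which $G$ acts by isometries and Ces{\`a}ro averages of continuous functions converge uniformly. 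Transporting this convergence to the fluctuating metric $d$ upstairs — that is, showing that $g\mapsto d(g.x,g.x')$ equidistributes uniformly in $(x,x')$ — is precisely the content of \cite[Theorem~1.2]{fuhrmann2022structure} and \cite[Theorem~3.12]{fuhrmann2025continuity}, and I expect this transfer step to be the crux of the argument.
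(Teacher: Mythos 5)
The paper does not actually prove this lemma: it is stated as a well-known result and justified solely by citing \cite[Theorem~1.2]{fuhrmann2022structure} and \cite[Theorem~3.12]{fuhrmann2025continuity}, which is exactly where your proposal also lands after you correctly observe that the crux (the tail equicontinuity near the diagonal and the existence of the pointwise limit) cannot be extracted from the paper's internal machinery without circularity through Lemma~\ref{lem:MMEFviaSMpreparations}. Your measure-theoretic translation and Dini-type sandwich are sound as conditional scaffolding, but they are ultimately subsumed by the same external citation the paper relies on, so your route is essentially the paper's.
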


We apply this lemma to obtain the following. 

\begin{lemma}
\label{lem:MMEFviaSMpreparations}
    Let $(X,G)$ be a mean equicontinuous action. 
  Then $\SM(X)\subseteq \Delta_X$. 
\end{lemma}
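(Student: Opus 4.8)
The plan is to argue by contradiction, combining the measure-theoretic characterisation of $\SM(X)$ from Proposition~\ref{pro:characterisationViaMeasures}(\ref{enu:CVM_SM}) with the uniform convergence $F_n^*d\to D_\mathcal{F}$ provided by Lemma~\ref{lem:uniformity}. Suppose there were a pair $(x,x')\in\SM(X)$ with $x\neq x'$. Choosing $\epsilon>0$ small enough that $U:=d^{-1}((\epsilon,\infty))\subseteq X^2$ is an open neighbourhood of $(x,x')$, Proposition~\ref{pro:characterisationViaMeasures}(\ref{enu:CVM_SM}) would furnish an asymptotically diagonal sequence $\seq{x}=(x_n,x_n')_{n\in\mathbb{N}}$, a F{\o}lner sequence $\mathcal{F}$, and a measure $\mu\in\mathcal{M}_\mathcal{F}(\seq{x})$ with $\mu(U)>0$. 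Since $d>\epsilon$ on $U$ and $d\geq 0$ everywhere, this yields the lower bound $\mu(d)\geq\epsilon\,\mu(U)>0$, which I will contradict.

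The next step is to unwind the cluster point. I would fix a subsequence $(n_k)_{k\in\mathbb{N}}$ with $(F_{n_k})_*\delta_{(x_{n_k},x_{n_k}')}\to\mu$; since $d\in C(X^2)$, weak*-continuity of evaluation together with the identity $(F_*\nu)(f)=\nu(F^*f)$ gives $\mu(d)=\lim_k F_{n_k}^*d(x_{n_k},x_{n_k}')$. It therefore suffices to show that this limit equals $0$.

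To this end I would invoke mean equicontinuity twice. First, Lemma~\ref{lem:uniformity} gives $\|F_n^*d-D_\mathcal{F}\|_\infty\to 0$, whence $|F_{n_k}^*d(x_{n_k},x_{n_k}')-D_\mathcal{F}(x_{n_k},x_{n_k}')|\to 0$. Second, since $(X,G)$ is mean equicontinuous, $D_\mathcal{F}$ is a continuous pseudometric on the compact space $X^2$, hence uniformly continuous and vanishing on $\Delta_X$; comparing $(x_{n_k},x_{n_k}')$ with $(x_{n_k},x_{n_k})$ and using $d(x_{n_k},x_{n_k}')\to 0$ then forces $D_\mathcal{F}(x_{n_k},x_{n_k}')\to 0$. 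Combining these two facts gives $F_{n_k}^*d(x_{n_k},x_{n_k}')\to 0$, so $\mu(d)=0$, contradicting $\mu(d)>0$ and completing the proof.

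I expect the main obstacle to be the final step, namely deducing $D_\mathcal{F}(x_{n_k},x_{n_k}')\to 0$ from the asymptotic diagonality $d(x_{n_k},x_{n_k}')\to 0$: one must pass from \emph{smallness of the metric} to \emph{smallness of the mean pseudometric} along the sequence, and this is exactly where mean equicontinuity (continuity of $D_\mathcal{F}$ near the diagonal, via uniform continuity on the compact $X^2$) is essential. If one prefers to avoid uniform continuity, an equally short route is to extract a convergent subsequence $(x_{n_k},x_{n_k}')\to(z,z)$—possible by compactness and by $d(z,z)=0$—and apply continuity of $D_\mathcal{F}$ directly at $(z,z)$. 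Everything else is a routine combination of the passage through the weak*-cluster point and the uniform convergence from Lemma~\ref{lem:uniformity}.
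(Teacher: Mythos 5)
Your proof is correct and follows essentially the same route as the paper: both isolate the open set $U=d^{-1}((\epsilon,\infty))$, obtain a uniform positive lower bound on the Ces\`aro averages $F_n^*d(x_n,x_n')$ along the asymptotically diagonal sequence, and contradict this via Lemma~\ref{lem:uniformity} together with $D_\mathcal{F}(x_n,x_n')\to 0$. The only (immaterial) difference is that you pass through the measure characterisation of Proposition~\ref{pro:characterisationViaMeasures} to get $\mu(d)>0$, whereas the paper bounds $F_n^*d(x_n,x_n')\geq c\epsilon$ directly from Proposition~\ref{pro:infimumReformulation}.
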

\begin{proof}
    Consider $(x,x')\in \SM(X)\setminus \Delta_X$. 
    There exists $\epsilon>0$ such that $U:=d^{-1}((\epsilon,\infty))$ is an open neighbourhood of $(x,x')$ and 
    Proposition \ref{pro:infimumReformulation} allows to choose a F{\o}lner sequence $\mathcal{F}=(F_n)_{n\in \mathbb{N}}$ and an asymptotically diagonal sequence $(x_n,x_n')_{n\in \mathbb{N}}$ such that 
    \begin{align*}
        c:=\inf_{n\in\mathbb{N}} \frac{\haar{G_U(x_n,x_n')\cap F_n}}{\haar{F_n}}>0. 
    \end{align*}
    For $g\in G_U(x_n,x_n')\cap F_n$ we know $(g.x_n,g.x_n')\in U$ and hence $d(g.x_n,g.x_n')>\epsilon$. 
    Thus, for $n\in\mathbb{N}$ we have
    \begin{align*}
        F_n^*d(x_n,x_n')
        \geq \frac{1}{\haar{F_n}}\int_{ G_U(x_n,x_n')\cap F_n} d(g.x_n,g.x_n') \dd g
        \geq c\epsilon>0.
    \end{align*}
    Since $(X,G)$ is mean equicontinuous, we observe from Lemma \ref{lem:uniformity} that 
    $F_n^*d\to D_\mathcal{F}$ in $C(X^2)$. 
    Furthermore, from $d(x_n,x_n')\to 0$ we know $D_\mathcal{F}(x_n,x_n')\to 0$ and hence there exists $n\in \mathbb{N}$ such that 
    \[\|F_m^*d-D_\mathcal{F}\|_\infty+D_\mathcal{F}(x_m,x_m')< c\epsilon\]
    holds for all $m\geq n$.
    In particular, we have
    \begin{align*}
        F_n^*d(x_n,x_n')
        \leq |F_n^*d(x_n,x_n')-D_\mathcal{F}(x_n,x_n')| + 
            D_\mathcal{F}(x_n,x_n')
        <c\epsilon,
    \end{align*}
    a contradiction.     
\end{proof}

\begin{theorem}
\label{the:theMMEFviaSM}
     Let $(X,G)$ be an action and $\pi\colon X\to Y$ a factor map. 
     \begin{enumerate}
         \item \label{enu:MMEFviaSM_action}
         $(X,G)$ is mean equicontinuous if and only if $\SM(X)\subseteq \Delta_X$. 
         \item \label{enu:MMEFviaSM_factor}
         $(Y,G)$ is mean equicontinuous if and only if $\SM(X)\subseteq R(\pi)$. 
         \item \label{enu:MMEFviaSM}
         $(X\big/ \icerhull{\SM(X)}, G)$ is the maximal mean equicontinuous factor of $(X,G)$. 
     \end{enumerate}
\end{theorem}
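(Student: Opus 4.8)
The plan is to reduce each part to its already-established counterpart for $\RMS$, using the inclusion $\RMS(X)\subseteq\SM(X)$ from Theorem~\ref{the:RMSInSMandFolnerRealisation}(\ref{enu:RMSISMAFR_inclusion}) together with Lemma~\ref{lem:MMEFviaSMpreparations}. For (\ref{enu:MMEFviaSM_action}), the forward implication is exactly Lemma~\ref{lem:MMEFviaSMpreparations}. For the converse, if $\SM(X)\subseteq\Delta_X$, then the inclusion $\RMS(X)\subseteq\SM(X)$ gives $\RMS(X)\subseteq\Delta_X$, and Proposition~\ref{pro:meanEquicontinuityViaRMS}(\ref{enu:MEVRMS}) yields mean equicontinuity of $(X,G)$.

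For (\ref{enu:MMEFviaSM_factor}), I would treat the two directions separately. If $(Y,G)$ is mean equicontinuous, then part (\ref{enu:MMEFviaSM_action}), applied to the action $(Y,G)$, gives $\SM(Y)\subseteq\Delta_Y$; the factor stability of $\SM$ from Lemma~\ref{lem:factorStabilityI} then sends any $(x,x')\in\SM(X)$ to $(\pi(x),\pi(x'))\in\SM(Y)\subseteq\Delta_Y$, so that $(x,x')\in R(\pi)$ and hence $\SM(X)\subseteq R(\pi)$. Conversely, if $\SM(X)\subseteq R(\pi)$, then $\RMS(X)\subseteq\SM(X)\subseteq R(\pi)$ and Theorem~\ref{the:meanEquicontinuityFactorCharacterisation}(\ref{enu:MEC_factor}) shows that $(Y,G)$ is mean equicontinuous.

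For (\ref{enu:MMEFviaSM}), I would mimic the proof of Theorem~\ref{the:maximalMeanEquicontinuousFactor}(\ref{enu:MMEF}). The quotient map $\pi\colon X\to X\big/\icerhull{\SM(X)}$ satisfies $\SM(X)\subseteq\icerhull{\SM(X)}=R(\pi)$, so by part (\ref{enu:MMEFviaSM_factor}) the quotient is mean equicontinuous. For maximality, let $\rho\colon X\to Z$ be any mean equicontinuous factor; part (\ref{enu:MMEFviaSM_factor}) gives $\SM(X)\subseteq R(\rho)$, and since $R(\rho)$ is a closed invariant equivalence relation we obtain $\icerhull{\SM(X)}\subseteq R(\rho)$. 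Thus $(Z,G)$ is a factor of $(X\big/\icerhull{\SM(X)}, G)$, establishing the claim. Since all required ingredients are in place, I anticipate no genuine obstacle; the only step meriting attention is the forward direction of (\ref{enu:MMEFviaSM_factor}), where part (\ref{enu:MMEFviaSM_action}) must be applied to the action on $Y$ rather than on $X$—legitimate because $(Y,G)$ is again an action of $G$ on a compact metric space.
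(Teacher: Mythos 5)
Your proposal is correct and follows essentially the same route as the paper: Lemma~\ref{lem:MMEFviaSMpreparations} for the forward direction of (\ref{enu:MMEFviaSM_action}), the inclusion $\RMS(X)\subseteq\SM(X)$ with Proposition~\ref{pro:meanEquicontinuityViaRMS} and Theorem~\ref{the:meanEquicontinuityFactorCharacterisation} for the converses, Lemma~\ref{lem:factorStabilityI} applied to $(Y,G)$ for the forward direction of (\ref{enu:MMEFviaSM_factor}), and the standard maximality argument for (\ref{enu:MMEFviaSM}). No gaps.
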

\begin{proof}
(\ref{enu:MMEFviaSM_action}): 
    From Lemma \ref{lem:MMEFviaSMpreparations} we know that whenever $(X,G)$ is mean equicontinuous, then $\SM(X)\subseteq \Delta_X$. 
    For the converse assume that $\SM(X)\subseteq \Delta_X$. 
    From Theorem \ref{the:RMSInSMandFolnerRealisation} we know that $\RMS(X)\subseteq \SM(X)\subseteq \Delta_X$ and Proposition \ref{pro:meanEquicontinuityViaRMS} implies that $(X,G)$ is mean equicontinuous. 
    
(\ref{enu:MMEFviaSM_factor}): 
    With a similar argument we observe from the Theorems \ref{the:RMSInSMandFolnerRealisation} and \ref{the:meanEquicontinuityFactorCharacterisation} that whenever $\SM(X)\subseteq R(\pi)$, then $(Y,G)$ is mean equicontinuous. 
    For the converse assume that $(Y,G)$ is mean equicontinuous and consider $(x,x')\in \SM(X)$. 
    It follows from Lemma \ref{lem:factorStabilityI} and (\ref{enu:MMEFviaSM_action}) that $(\pi(x),\pi(x'))\in \SM(Y)\subseteq \Delta_Y$, and hence $(x,x')\in R(\pi)$. 
    
(\ref{enu:MMEFviaSM}): 
    This follows from (\ref{enu:MMEFviaSM_factor}) with an argument similar to the one presented for Theorem \ref{the:maximalMeanEquicontinuousFactor}. 
\end{proof}

In order to develop appropriate conditions that allow to use $\SM^\mathcal{F}$ for the description of the maximal $\mathcal{F}-$mean equicontinuous action we will use the following, which can be found in \cite[Theorem 5.1 and Theorem 5.8]{fuhrmann2022structure}. 
\begin{proposition}
\label{pro:FGL_FMEvsME}
    Let $(X,G)$ be an action and let $\mathcal{F}$ be a F{\o}lner sequence. 
    Assume that at least one of the following conditions is satisfied:
    \begin{itemize}
        \item $(X,G)$ is fully supported. 
        \item $\mathcal{F}$ is a two-sided F{\o}lner sequence. 
    \end{itemize}
    Then $(X,G)$ is mean equicontinuous if and only if it is $\mathcal{F}$-mean equicontinuous. 
\end{proposition}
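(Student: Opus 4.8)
The plan is to deduce the non-trivial implication from the characterisations of mean equicontinuity already at our disposal, isolating the genuinely measure-theoretic input and deferring the decisive step to \cite{fuhrmann2022structure}. One implication is immediate: since $D_\mathcal{F}\leq D$ holds pointwise, continuity of $D$ forces continuity of $D_\mathcal{F}$, so a mean equicontinuous action is $\mathcal{F}$-mean equicontinuous for every F{\o}lner sequence. For the converse I would invoke Proposition \ref{pro:meanEquicontinuityViaRMS}, by which $\mathcal{F}$-mean equicontinuity is equivalent to $\RMS^\mathcal{F}(X)\subseteq \Delta_X$ and mean equicontinuity to $\RMS(X)\subseteq \Delta_X$. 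As $\RMS^\mathcal{F}(X)\subseteq \RMS(X)$ always holds, the statement reduces to the single inclusion $\RMS(X)\setminus \Delta_X\subseteq \RMS^\mathcal{F}(X)$: granting it, $\RMS^\mathcal{F}(X)\subseteq \Delta_X$ forces $\RMS(X)\subseteq \Delta_X$, since any off-diagonal pair in $\RMS(X)$ would land in $\RMS^\mathcal{F}(X)\subseteq\Delta_X$.

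To attack this inclusion I would pass to the measure-theoretic description of Proposition \ref{pro:characterisationViaMeasures}. Fix $(x,x')\in \RMS(X)$ with $x\neq x'$ and an open neighbourhood $U$ of $(x,x')$. Proposition \ref{pro:characterisationViaMeasures}(\ref{enu:CVM_RMP}) furnishes an asymptotically diagonal sequence $(x_n,x_n')_{n\in\mathbb{N}}$ and measures $\mu_n\in \mathcal{M}_G(x_n,x_n')$ with $\inf_n\mu_n(U)>0$; passing to ergodic components (each supported on the respective orbit closure) we may assume every $\mu_n$ ergodic and $\inf_n\mu_n(U)>0$ still. The task is then to realise this mass along the prescribed sequence $\mathcal{F}$, i.e.\ to exhibit an asymptotically diagonal sequence whose $\mathcal{F}$-averages charge $U$. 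Here Proposition \ref{pro:basicPropertiesRMSandSM}(\ref{enu:BPRMSSM_monotonicity}) helps, since $\RMS^{\mathcal{F}'}(X)\subseteq \RMS^\mathcal{F}(X)$ for subsequences $\mathcal{F}'$ of $\mathcal{F}$; it thus suffices to produce the $\mathcal{F}$-witnesses along some subsequence of $\mathcal{F}$, which is exactly the regime supplied by Lemma \ref{lem:genericPoints}(1).

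The two hypotheses enter precisely at this transfer step. When $\mathcal{F}$ is two-sided, the group is unimodular and the right-F{\o}lner property $\haar{F_n g\Delta F_n}/\haar{F_n}\to 0$ yields, via the substitution $h\mapsto hg$, that $D_\mathcal{F}$ is invariant under the diagonal action, $D_\mathcal{F}(g.a,g.b)=D_\mathcal{F}(a,b)$; combined with a Fubini interchange between $\mathcal{F}$-averages and arbitrary F{\o}lner averages (using left-F{\o}lner-ness of the latter), this rigidity constrains which invariant measures can witness $\RMS$ under $\mathcal{F}$-mean equicontinuity and is the structural input that upgrades the conclusion to the Weyl setting. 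When $(X,G)$ is fully supported, Lemma \ref{lem:productSupport} gives that $(X^2,G)$ is fully supported, so every pair—in particular those in $\supp(\mu_n)$—is approximable by points lying in supports of invariant measures on $X^2$; Lemma \ref{lem:genericPoints}(2) then realises the $\mu_n$ as $\mathcal{F}$-generic limits of suitably chosen near-diagonal points, converting $\mathcal{M}_G$-witnesses into $\mathcal{M}_\mathcal{F}$-witnesses as required by Proposition \ref{pro:characterisationViaMeasures}(\ref{enu:CVM_RMPF}).

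I expect the main obstacle to be keeping the witnessing sequence asymptotically diagonal while switching from the auxiliary F{\o}lner sequences handed over by Lemma \ref{lem:genericPoints}(2) to the fixed sequence $\mathcal{F}$: a generic point for $\mu_n$ lives on $\supp(\mu_n)$, hence away from the diagonal, whereas $\RMS^\mathcal{F}$ demands witnesses approaching $\Delta_X$. Reconciling these two requirements is the substantive content, and it is exactly what full support (density of near-diagonal points inside supports of invariant measures on $X^2$) and two-sidedness (direction-insensitive, diagonally invariant averaging) provide. Since a self-contained treatment would essentially reproduce the structure theory behind it, at this point I would invoke \cite[Theorems~5.1 and~5.8]{fuhrmann2022structure} for the decisive step and use the reduction above to phrase the outcome in terms of the relations $\RMS^{(\mathcal{F})}$.
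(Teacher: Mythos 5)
The paper does not actually prove this proposition: it is imported wholesale from \cite[Theorems 5.1 and 5.8]{fuhrmann2022structure} with no argument given, and your proposal, after its detour through the relations $\RMS^{(\mathcal{F})}$, ends by invoking exactly the same citation for ``the decisive step'', so in substance the two treatments coincide. I would only caution that the scaffolding you erect before the citation is both unnecessary and shaky: the reduction to the inclusion $\RMS(X)\setminus\Delta_X\subseteq\RMS^\mathcal{F}(X)$ replaces the proposition by a strictly stronger statement that neither the paper nor the cited theorems establish, and Lemma \ref{lem:genericPoints}(2) only yields genericity along \emph{some} F{\o}lner sequence depending on the point and the measure, so it cannot by itself convert $\mathcal{M}_G$-witnesses into $\mathcal{M}_\mathcal{F}$-witnesses for the prescribed $\mathcal{F}$ --- which is, as you concede, precisely the content one is forced to borrow from \cite{fuhrmann2022structure}.
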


\begin{remark}
    Note that these conditions are trivially satisfied, whenever $G$ is Abelian or whenever $(X,G)$ is minimal. 
\end{remark}

\begin{corollary}
\label{cor:theMMEFviaSM_F}
    Let $(X,G)$ be an action, $\mathcal{F}$ be a F{\o}lner sequence in $G$ and $\pi\colon X\to Y$ be a factor map.
   Under the hypotheses of Proposition \ref{pro:FGL_FMEvsME},
    the following statements hold. 
    \begin{enumerate}
        \item \label{enu:MMEFviaSMFolner_action}
        $(X,G)$ is $\mathcal{F}-$mean equicontinuous if and only if $\SM^\mathcal{F}(X)\subseteq \Delta_X$. 
        \item \label{enu:MMEFviaSMFolner_factor}
        $(Y,G)$ is $\mathcal{F}-$mean equicontinuous if and only if $\SM^\mathcal{F}(X)\subseteq R(\pi)$. 
        \item \label{enu:MMEFviaSMFolner}
        $(X\big/\icerhull{\SM^\mathcal{F}(X)},G)$ is the maximal $\mathcal{F}-$mean equicontinuous factor of $(X,G)$. 
    \end{enumerate}    
\end{corollary}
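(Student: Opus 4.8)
The plan is to transfer the unconditional results about $\SM(X)$ from Theorem \ref{the:theMMEFviaSM} to $\SM^\mathcal{F}(X)$, using two ingredients: the inclusion $\SM^\mathcal{F}(X)\subseteq\SM(X)$ from Proposition \ref{pro:basicPropertiesRMSandSM}(\ref{enu:BPRMSSM_inclusion}), and the collapse of $\mathcal{F}$-mean equicontinuity into mean equicontinuity under the standing hypotheses (Proposition \ref{pro:FGL_FMEvsME}). Before starting I would record that these hypotheses pass to factors: if $\mathcal{F}$ is two-sided this is automatic, and if $(X,G)$ is fully supported then so is every factor $(Y,G)$, since for $y\in Y$ one picks $x\in\pi^{-1}(y)$ and $\mu\in\mathcal{M}_G(X)$ with $x\in\supp(\mu)$, and then $\supp(\pi_*\mu)=\pi(\supp(\mu))\ni y$ with $\pi_*\mu\in\mathcal{M}_G(Y)$. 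Hence, under the standing hypotheses, every factor of $(X,G)$ is $\mathcal{F}$-mean equicontinuous if and only if it is mean equicontinuous.

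For (\ref{enu:MMEFviaSMFolner_action}) I would prove the two directions separately. The implication $\SM^\mathcal{F}(X)\subseteq\Delta_X\Rightarrow\mathcal{F}$-mean equicontinuity needs no hypothesis: by Theorem \ref{the:RMSInSMandFolnerRealisation}(\ref{enu:RMSISMAFR_inclusion_F}) we have $\RMS^\mathcal{F}(X)\subseteq\SM^\mathcal{F}(X)\subseteq\Delta_X$, and Proposition \ref{pro:meanEquicontinuityViaRMS}(\ref{enu:MEVRMS_F}) concludes. Conversely, if $(X,G)$ is $\mathcal{F}$-mean equicontinuous then it is mean equicontinuous by Proposition \ref{pro:FGL_FMEvsME}, so $\SM(X)\subseteq\Delta_X$ by Theorem \ref{the:theMMEFviaSM}(\ref{enu:MMEFviaSM_action}), and $\SM^\mathcal{F}(X)\subseteq\SM(X)\subseteq\Delta_X$ follows.

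Part (\ref{enu:MMEFviaSMFolner_factor}) is the same argument one level up. For ``$\Leftarrow$'', from $\SM^\mathcal{F}(X)\subseteq R(\pi)$ I get $\RMS^\mathcal{F}(X)\subseteq R(\pi)$ and apply Theorem \ref{the:meanEquicontinuityFactorCharacterisation}(\ref{enu:MEC_factor_F}). For ``$\Rightarrow$'', $\mathcal{F}$-mean equicontinuity of $(Y,G)$ upgrades to mean equicontinuity via the preliminary observation, so Theorem \ref{the:theMMEFviaSM}(\ref{enu:MMEFviaSM_factor}) gives $\SM(X)\subseteq R(\pi)$, whence $\SM^\mathcal{F}(X)\subseteq\SM(X)\subseteq R(\pi)$. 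Finally, (\ref{enu:MMEFviaSMFolner}) would follow from (\ref{enu:MMEFviaSMFolner_factor}) by the quotient argument of Theorem \ref{the:maximalMeanEquicontinuousFactor}: applying (\ref{enu:MMEFviaSMFolner_factor}) to the quotient map shows $(X\big/\icerhull{\SM^\mathcal{F}(X)},G)$ is $\mathcal{F}$-mean equicontinuous, while for any $\mathcal{F}$-mean equicontinuous factor $\pi$ we obtain $\SM^\mathcal{F}(X)\subseteq R(\pi)$, so $\icerhull{\SM^\mathcal{F}(X)}\subseteq R(\pi)$ because $R(\pi)$ is a closed invariant equivalence relation, exhibiting $(Y,G)$ as a factor.

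The main obstacle is the ``$\Rightarrow$'' directions of (\ref{enu:MMEFviaSMFolner_action}) and (\ref{enu:MMEFviaSMFolner_factor}). One cannot argue with $\SM^\mathcal{F}$ directly, since Example \ref{exa:lamplighterExample} shows that without the hypotheses $\mathcal{F}$-mean equicontinuity does not force $\SM^\mathcal{F}(X)\subseteq\Delta_X$; the whole point is to detour through the Weyl relation $\SM(X)$, where Theorem \ref{the:theMMEFviaSM} applies unconditionally, and to use Proposition \ref{pro:FGL_FMEvsME} to erase the gap between the two notions of mean equicontinuity. The only genuinely new verification is that full support is inherited by factors.
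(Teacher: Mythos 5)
Your proof is correct and follows essentially the same route as the paper's: the converse directions go through $\RMS^{\mathcal{F}}(X)\subseteq\SM^{\mathcal{F}}(X)$ and Proposition \ref{pro:meanEquicontinuityViaRMS} (resp.\ Theorem \ref{the:meanEquicontinuityFactorCharacterisation}), while the forward directions upgrade $\mathcal{F}$-mean equicontinuity to mean equicontinuity via Proposition \ref{pro:FGL_FMEvsME} and then invoke Theorem \ref{the:theMMEFviaSM} together with $\SM^{\mathcal{F}}(X)\subseteq\SM(X)$. Your explicit check that full support is inherited by factors (via $\supp(\pi_*\mu)=\pi(\supp(\mu))$) is a detail the paper's terse proof leaves implicit, and it is indeed needed to apply Proposition \ref{pro:FGL_FMEvsME} to $(Y,G)$ in part (\ref{enu:MMEFviaSMFolner_factor}).
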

\begin{proof}
(\ref{enu:MMEFviaSMFolner_action}): 
    Whenever $(X,G)$ is $\mathcal{F}-$mean equicontinuous, then our assumptions and  Proposition \ref{pro:FGL_FMEvsME} yield that 
    $(X,G)$ is mean equicontinuous. 
    Thus, Theorem \ref{the:theMMEFviaSM} yields that $\SM^\mathcal{F}(X)\subseteq \SM(X)\subseteq \Delta_X$. 
    For the converse assume that $\SM^\mathcal{F}(X)\subseteq \Delta_X$. 
    From Theorem \ref{the:RMSInSMandFolnerRealisation} we observe that $\RMS^\mathcal{F}(X)\subseteq \SM^\mathcal{F}(X)\subseteq \Delta_X$ and Proposition \ref{pro:meanEquicontinuityViaRMS} implies that $(X,G)$ is $\mathcal{F}$-mean equicontinuous. 

(\ref{enu:MMEFviaSMFolner_factor}):
    This follows from a similar combination of the 
    Theorems \ref{the:RMSInSMandFolnerRealisation} and  \ref{the:theMMEFviaSM} with the Propositions \ref{pro:meanEquicontinuityViaRMS} and \ref{pro:FGL_FMEvsME}.

(\ref{enu:MMEFviaSMFolner}):
     This follows from (\ref{enu:MMEFviaSMFolner_factor}) with an argument similar to the one presented for Theorem \ref{the:maximalMeanEquicontinuousFactor}. 
\end{proof}

\subsection{The interplay with the proximal relation}
Let $(X,G)$ be an action. 
A pair $(x,x')\in X^2$ is called \emph{proximal} if $\inf_{g\in G}d(g.x,g.x')=0$ \cite{auslander1988minimal}. 
We denote $\P(X)$ for the set of all proximal pairs.
Clearly, we have $\P(X)\subseteq \RP(X)$ and Proposition \ref{pro:basicPropertiesRMSandSM} and Theorem \ref{the:RMSInSMandFolnerRealisation} yield
\[\P(X)\cup \RMS(X)\subseteq \P(X)\cup \SM(X)\subseteq \RP(X).\]
An action $(X,G)$ is called \emph{distal} if $\P(X)\subseteq \Delta_X$. 
It is well-known that also the notion of distality allows for the concept of a maximal distal factor and that the latter can be described by $X\big/\icerhull{\P(X)}$ \cite{auslander1988minimal}. 
Furthermore, from \cite[Theorem 9.1]{hauser2024mean} or \cite[Corollary 3.6]{li2015mean} we know\footnote{
Note that the proof is presented there only in the context of actions of countable amenable groups but allows for a straightforward generalization to our context. 
} that an action $(X,G)$ is equicontinuous if and only if it is mean equicontinuous and distal. From this observation it is straightforward to conclude that the simultaneous consideration of proximality and regionally (joint) mean sensitivity allows to describe the maximal equicontinuous factor of an action, i.e.\ the following. 
We include the short proof for the convenience of the reader. 

\begin{corollary}\label{cor:interplayProximalRelation}
    For any action $(X,G)$ we have 
    \[\icerhull{\RP(X)}
    = \icerhull{\P(X)\cup \RMS(X)} 
    = \icerhull{\P(X)\cup \SM(X)}.\]
\end{corollary}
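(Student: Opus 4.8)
The plan is to reduce the three-fold equality to a single equicontinuity statement by sandwiching. Write $A:=\icerhull{\RP(X)}$, $B:=\icerhull{\P(X)\cup\RMS(X)}$ and $C:=\icerhull{\P(X)\cup\SM(X)}$. The inclusions $\P(X)\cup\RMS(X)\subseteq\P(X)\cup\SM(X)\subseteq\RP(X)$ recorded above (via Proposition~\ref{pro:basicPropertiesRMSandSM} and Theorem~\ref{the:RMSInSMandFolnerRealisation}), together with the monotonicity of the operation $\icerhull{\cdot}$, immediately give $B\subseteq C\subseteq A$. Hence it suffices to establish the single reverse inclusion $A\subseteq B$, since then $A\subseteq B\subseteq C\subseteq A$ forces all three relations to coincide.

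To prove $A\subseteq B$, I would consider the factor map $\pi\colon X\to Y:=X\big/B$ and show that $(Y,G)$ is equicontinuous. On the one hand, $\RMS(X)\subseteq B=R(\pi)$, so Theorem~\ref{the:meanEquicontinuityFactorCharacterisation}(\ref{enu:MEC_factor}) yields that $(Y,G)$ is mean equicontinuous. On the other hand, $\P(X)\subseteq B$ and $B$ is a closed invariant equivalence relation, so $\icerhull{\P(X)}\subseteq B$; thus $(Y,G)$ is a factor of the maximal distal factor $X\big/\icerhull{\P(X)}$, and since distality passes to factors, $(Y,G)$ is distal. By the cited characterisation that an action is equicontinuous if and only if it is both mean equicontinuous and distal (\cite[Theorem~9.1]{hauser2024mean} or \cite[Corollary~3.6]{li2015mean}), we conclude that $(Y,G)$ is equicontinuous.

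It then remains to transfer this information back to the regionally proximal relation. In any equicontinuous system the regionally proximal relation is trivial, so $\RP(Y)=\Delta_Y$; moreover, using the uniform continuity of $\pi$ on the compact space $X$, regionally proximal pairs are mapped to regionally proximal pairs, i.e.\ $(\pi\times\pi)(\RP(X))\subseteq\RP(Y)=\Delta_Y$. Consequently $\RP(X)\subseteq R(\pi)=B$, and since $B$ is a closed invariant equivalence relation while $A=\icerhull{\RP(X)}$ is by definition the smallest such relation containing $\RP(X)$, we obtain $A\subseteq B$. Together with the easy chain this gives $A=B=C$, which is the claim.

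The routine inputs—the inclusion chain and the measure-theoretic machinery behind the factor criterion for mean equicontinuity—are already packaged in the quoted results, so the genuine content lies entirely in the reduction. I expect the two points requiring the most care to be the verification that $X\big/B$ is distal, which is best handled through the factor-of-a-distal-system argument rather than by attempting to lift proximal pairs from $Y$ to $X$, and the standard but easily overlooked observation that regional proximality is preserved under factor maps, which closes the loop between equicontinuity of $Y$ and the inclusion $\RP(X)\subseteq B$.
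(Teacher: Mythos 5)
Your proof is correct and follows essentially the same route as the paper: the easy inclusion chain reduces everything to $\icerhull{\RP(X)}\subseteq\icerhull{\P(X)\cup\RMS(X)}$, and that quotient is shown to be simultaneously distal and mean equicontinuous, hence equicontinuous. The only (cosmetic) difference is that in the final step you re-derive the maximality of the equicontinuous factor by hand, via $\RP(Y)=\Delta_Y$ and the preservation of regional proximality under factor maps, whereas the paper simply invokes that $X\big/\icerhull{\RP(X)}$ is the maximal equicontinuous factor.
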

\begin{proof}
    Clearly, we have 
    \[\icerhull{\P(X)\cup \RMS(X)}\subseteq \icerhull{\P(X)\cup \SM(X)}\subseteq \icerhull{\RP(X)}.\]

    For the converse note that $X/\icerhull{\P(X) \cup \RMS(X)}$ is a factor of both, the maximal distal factor $X/\icerhull{\P(X)}$, and the maximal mean equicontinuous factor $X/\icerhull{\RMS(X)}$ of $(X,G)$.
    We thus know that $X/\icerhull{\P(X) \cup \RMS(X)}$ is distal and mean equicontinuous and hence equicontinuous.
    Since $X/\icerhull{\RP(X)}$ is the maximal equicontinuous factor
    we observe that $X/\icerhull{\P(X) \cup \RMS(X)}$ is a factor of $X/\icerhull{\RP(X)}$ and conclude $\icerhull{\RP(X)}\subseteq \icerhull{\P(X) \cup \RMS(X)}$. 
\end{proof}

\section{Reflexivity and the maximal support}
\label{sec:reflexivity}
In this section, we characterise when the relations $\RMS^{(\mathcal{F})}(X)$ and $\SM^{(\mathcal{F})}(X)$ are reflexive, and prove that the maximal mean equicontinuous factor map is one-to-one outside $\supp(X,G)$. 
For this we first provide some details on the interplay of the considered relations and the maximal support.  

\begin{theorem}
\label{the:maximalSupportAndRMS_SM}
    Let $(X,G)$ be an action. 
    \begin{enumerate}
        \item \label{enu:MSARMSSM}
        We have
        \[\RMS(X)\subseteq \SM(X)\subseteq \supp(X,G)^2.\] 

        \item \label{enu:MSARMSSM_diagonal}
        We have
        \[\Delta_{\supp(X,G)} 
        = \Delta_X\cap \RMS(X)
        = \Delta_X\cap \SM(X).\] 

        \item \label{enu:MSARMSSM_diagonal_F}
        For any F{\o}lner sequence $\mathcal{F}$ in $G$ we have
        \[
        \Delta_{\supp(X,G)} 
        = \Delta_X\cap \RMS^\mathcal{F}(X)
        = \Delta_X\cap \SM^\mathcal{F}(X)
        .\]
    \end{enumerate}
\end{theorem}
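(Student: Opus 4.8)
The plan is to read all three items through the measure-theoretic characterisations of Proposition~\ref{pro:characterisationViaMeasures}, combined with Lemma~\ref{lem:characterisationMaximalSupport} and Lemma~\ref{lem:productSupport}. A useful preliminary observation is that the items are tightly linked: if I establish (\ref{enu:MSARMSSM}) together with the single lower bound $\Delta_{\supp(X,G)}\subseteq \RMS^\mathcal{F}(X)$ for every F{\o}lner sequence $\mathcal{F}$, then everything else follows by chaining the inclusions $\RMS^\mathcal{F}(X)\subseteq \RMS(X)\subseteq \SM(X)$ and $\RMS^\mathcal{F}(X)\subseteq \SM^\mathcal{F}(X)\subseteq \SM(X)$ supplied by Proposition~\ref{pro:basicPropertiesRMSandSM} and Theorem~\ref{the:RMSInSMandFolnerRealisation}. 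So I would concentrate on two things only: the upper bound $\SM(X)\subseteq \supp(X,G)^2$ and the diagonal lower bound.

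For $\SM(X)\subseteq \supp(X,G)^2$ I would take $(x,x')\in \SM(X)$ and, for an arbitrary open neighbourhood $U$ of $(x,x')$, invoke Proposition~\ref{pro:characterisationViaMeasures}(\ref{enu:CVM_SM}) to produce an asymptotically diagonal sequence $\seq{x}$, a F{\o}lner sequence, and some $\mu\in \mathcal{M}_\mathcal{F}(\seq{x})$ with $\mu(U)>0$. Since every element of $\mathcal{M}_\mathcal{F}(\seq{x})$ is a cluster point of Ces\`aro averages, it is invariant, i.e.\ $\mu\in \mathcal{M}_G(X^2)$. Thus every neighbourhood of $(x,x')$ is charged by some invariant measure on $X^2$, and Lemma~\ref{lem:characterisationMaximalSupport} applied to $(X^2,G)$ gives $(x,x')\in \supp(X^2,G)=\supp(X,G)^2$ via Lemma~\ref{lem:productSupport}. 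The inclusion $\RMS(X)\subseteq \SM(X)$ is Theorem~\ref{the:RMSInSMandFolnerRealisation}(\ref{enu:RMSISMAFR_inclusion}), which completes (\ref{enu:MSARMSSM}).

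For the lower bound, fix $x\in \supp(X,G)$ and an open neighbourhood $U$ of $(x,x)$, and set $V:=j^{-1}(U)$, where $j\colon X\to X^2,\ y\mapsto (y,y)$ is the diagonal embedding; then $V$ is an open neighbourhood of $x$. By Lemma~\ref{lem:characterisationMaximalSupport} some $\nu\in \mathcal{M}_G(X)$ satisfies $\nu(V)>0$, and passing to an ergodic component I may assume $\nu$ is ergodic. The key point is that for the prescribed $\mathcal{F}$, Lemma~\ref{lem:genericPoints}(1) yields a subsequence $\mathcal{F}'$ of $\mathcal{F}$ and a point $z$ that is $\mathcal{F}'$-generic for $\nu$. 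Using that $j_*$ is continuous and that $(F_n)_*\delta_{(z,z)}=j_*\big((F_n)_*\delta_z\big)$, the measure $j_*\nu$ arises as a cluster point of $\big((F_n)_*\delta_{(z,z)}\big)_n$ along $\mathcal{F}$, whence $j_*\nu\in \mathcal{M}_\mathcal{F}(z,z)$. Taking the constant, hence asymptotically diagonal, sequence $(x_n,x_n'):=(z,z)$ and $\mu_n:=j_*\nu$, one has $\mu_n(U)=\nu(V)>0$ for all $n$, so $\inf_{n}\mu_n(U)>0$, and Proposition~\ref{pro:characterisationViaMeasures}(\ref{enu:CVM_RMPF}) gives $(x,x)\in \RMS^\mathcal{F}(X)$. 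This proves $\Delta_{\supp(X,G)}\subseteq \RMS^\mathcal{F}(X)$, and since $\RMS^\mathcal{F}(X)\subseteq \RMS(X)$ it also yields $\Delta_{\supp(X,G)}\subseteq \RMS(X)$.

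Finally I would assemble the equalities. For (\ref{enu:MSARMSSM_diagonal_F}), intersecting (\ref{enu:MSARMSSM}) and the inclusions above with $\Delta_X$ gives
\[
\Delta_{\supp(X,G)}\subseteq \Delta_X\cap \RMS^\mathcal{F}(X)\subseteq \Delta_X\cap \SM^\mathcal{F}(X)\subseteq \Delta_X\cap \supp(X,G)^2=\Delta_{\supp(X,G)},
\]
so all four terms coincide; here I use $\Delta_X\cap \supp(X,G)^2=\Delta_{\supp(X,G)}$. Statement (\ref{enu:MSARMSSM_diagonal}) follows identically, chaining through $\RMS(X)$ and $\SM(X)$ in place of their $\mathcal{F}$-versions. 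The main obstacle I anticipate is exactly the lower bound: one must realise a genuinely invariant diagonal measure charging $U$ as a cluster point along the \emph{given} F{\o}lner sequence, which is precisely what Lemma~\ref{lem:genericPoints}(1) supplies. The other delicate point is recognising that in the characterisation of $\RMS^{(\mathcal{F})}$ the witnessing asymptotically diagonal sequence need not converge to $(x,x)$; this is what allows a single constant sequence based at a possibly distant generic point $z$ to certify regional mean sensitivity of the diagonal pair.
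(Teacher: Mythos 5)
Your proposal is correct and follows essentially the same route as the paper: the upper bound $\SM(X)\subseteq \supp(X,G)^2$ via Proposition~\ref{pro:characterisationViaMeasures} together with Lemmas~\ref{lem:characterisationMaximalSupport} and~\ref{lem:productSupport}, and the diagonal lower bound $\Delta_{\supp(X,G)}\subseteq \RMS^\mathcal{F}(X)$ via an ergodic measure charging a diagonal neighbourhood, a generic point along a subsequence of $\mathcal{F}$ from Lemma~\ref{lem:genericPoints}, and a constant asymptotically diagonal sequence. Your diagonal embedding $j$ is just a more explicit phrasing of the paper's identification of $\Delta_X$ with $X$, and the final chaining of inclusions matches the paper's assembly of the equalities.
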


\begin{remark}
    Recall from the Propositions \ref{pro:interplayweaksitmAndregionallysitm} and \ref{pro:basicPropertiesRMSandSM} that 
    \[\WSM^\mathcal{F}(X)\subseteq \SM^\mathcal{F}(X)\subseteq\SM(X).\] 
    We thus observe from (\ref{enu:MSARMSSM}) that $\WSM^\mathcal{F}(X)\subseteq \supp(X,G)^2$. This observation can also be found in \cite[Proposition 5.14]{li2021mean}, where it was shown for actions of $\mathbb{Z}$ and with respect to the standard F{\o}lner sequence $\mathcal{F}=(\{0,\dots,n-1\})_{n\in \mathbb{N}}$. 
\end{remark}

\begin{proof}
(\ref{enu:MSARMSSM}):
    The first inclusion was shown in Theorem \ref{the:RMSInSMandFolnerRealisation}. 
    Let $(x,x')\in \SM(X)$ and consider an open neighbourhood $U$ of $(x,x')$. From Proposition \ref{pro:characterisationViaMeasures} we observe the existence of $\mu\in \mathcal{M}_G(X^2)$ such that $\mu(U)>0$. It follows from the Lemmas \ref{lem:characterisationMaximalSupport} and 
    \ref{lem:productSupport} that 
    $(x,x')\in \supp(X^2,G)=\supp(X,G)^2$. 
    
(\ref{enu:MSARMSSM_diagonal} \& \ref{enu:MSARMSSM_diagonal_F}):
    Recall from (\ref{enu:MSARMSSM}), Proposition \ref{pro:basicPropertiesRMSandSM} and Theorem \ref{the:RMSInSMandFolnerRealisation} that for a F{\o}lner sequence $\mathcal{F}$ we have 
    $\RMS^\mathcal{F}(X)\subseteq \RMS(X)\subseteq \SM(X)\subseteq \supp(X,G)^2$ 
    and 
    $\RMS^\mathcal{F}(X)\subseteq \SM^\mathcal{F}(X)\subseteq \SM(X)\subseteq \supp(X,G)^2$.
    It thus remains to show that $\Delta_{\supp(X,G)}\subseteq \RMS^\mathcal{F}(X)$. 

    For this consider $x\in \supp(X,G)$ and let $U$ be an open neighbourhood of $(x,x)$ in $X^2$. 
    Consider $V:=U\cap \Delta_X$. 
    Identifying $\Delta_X$ with $X$ we observe $V$ to be an open neighbourhood of $x$. 
    Since $x\in \supp(X,G)$ we obtain from Lemma \ref{lem:characterisationMaximalSupport} that there exists some $\mu\in \mathcal{M}_G(X)$ such that $\mu(V)>0$. 
    In consideration of the ergodic decomposition of $\mu$ we assume w.l.o.g.\ that $\mu$ is ergodic.
    By Lemma \ref{lem:genericPoints} there exists $x'\in X$ and a subsequence $\mathcal{F}'$ of $\mathcal{F}$ such that $x'$ is $\mathcal{F}'$-generic for $\mu$. 
    In particular, we have $\mu\in \mathcal{M}_{\mathcal{F}'}(x')\subseteq \mathcal{M}_\mathcal{F}(x')$. 
    Interpreting $X$ again as $\Delta_X$ we have found $(x',x')\in \Delta_X$ and $\mu\in \mathcal{M}_\mathcal{F}(x',x')$ with $\mu(U)=\mu(V)>0$. 
    
    Considering the asymptotically diagonal sequence $(x_n,x_n'):=(x',x')$ we 
    have $\mu\in \mathcal{M}_\mathcal{F}(x_n,x_n')$ for all $n\in \mathbb{N}$. 
    Thus, we observe from Proposition \ref{pro:characterisationViaMeasures} that
    $(x,x)\in \RMS^\mathcal{F}(X)$.
\end{proof}

Recall that an action is called fully supported if $\supp(X,G)=X$.
From Theorem \ref{the:maximalSupportAndRMS_SM}, we observe the following. 

\begin{corollary}
\label{cor:characterisationFullySupported}
    For an action $(X,G)$ the following statements are equivalent. 
    \begin{enumerate}
        \item 
            $(X,G)$ is fully supported. 
        \item
            $\SM(X)$ is reflexive, i.e.\ $\Delta_X\subseteq \SM(X)$. 
        \item
            $\RMS(X)$ is reflexive. 
        \item 
            For some F{\o}lner sequence $\mathcal{F}$ we have that $\SM^\mathcal{F}(X)$ is reflexive. 
        \item 
            For all F{\o}lner sequences $\mathcal{F}$ we have that $\SM^\mathcal{F}(X)$ is reflexive. 
        \item 
            For some F{\o}lner sequence $\mathcal{F}$ we have that $\RMS^\mathcal{F}(X)$ is reflexive. 
        \item 
            For all F{\o}lner sequences $\mathcal{F}$ we have that $\RMS^\mathcal{F}(X)$ is reflexive. 
    \end{enumerate}
    In particular, whenever $(X,G)$ is minimal and $\mathcal{F}$ is a F{\o}lner sequence, then $\SM(X)$, $\RMS(X)$, $\SM^\mathcal{F}(X)$ and $\RMS^\mathcal{F}(X)$ are reflexive. 
\end{corollary}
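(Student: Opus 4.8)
The plan is to reduce all seven equivalences to the identities already established in Theorem~\ref{the:maximalSupportAndRMS_SM}, together with the elementary observation that a relation $Q\subseteq X^2$ is reflexive (that is, $\Delta_X\subseteq Q$) if and only if $\Delta_X\cap Q=\Delta_X$. The decisive point is that Theorem~\ref{the:maximalSupportAndRMS_SM} computes $\Delta_X\cap Q=\Delta_{\supp(X,G)}$ for each of the relevant relations $Q$; hence each such $Q$ is reflexive precisely when $\Delta_{\supp(X,G)}=\Delta_X$, i.e.\ exactly when $\supp(X,G)=X$, which is the definition of full supportedness.

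First I would treat the $\mathcal{F}$-free relations. By Theorem~\ref{the:maximalSupportAndRMS_SM}(\ref{enu:MSARMSSM_diagonal}) we have $\Delta_X\cap\RMS(X)=\Delta_X\cap\SM(X)=\Delta_{\supp(X,G)}$, so the reflexivity criterion immediately yields the equivalences $(1)\Leftrightarrow(2)$ and $(1)\Leftrightarrow(3)$. Next I would handle the $\mathcal{F}$-dependent relations using Theorem~\ref{the:maximalSupportAndRMS_SM}(\ref{enu:MSARMSSM_diagonal_F}), which gives $\Delta_X\cap\RMS^\mathcal{F}(X)=\Delta_X\cap\SM^\mathcal{F}(X)=\Delta_{\supp(X,G)}$ for \emph{every} F{\o}lner sequence $\mathcal{F}$. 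Since the right-hand side is independent of $\mathcal{F}$, full supportedness forces reflexivity simultaneously for all $\mathcal{F}$, whereas reflexivity for even a single $\mathcal{F}$ already forces $\Delta_{\supp(X,G)}=\Delta_X$; this closes the chains $(1)\Rightarrow(5)\Rightarrow(4)\Rightarrow(1)$ and $(1)\Rightarrow(7)\Rightarrow(6)\Rightarrow(1)$, completing all equivalences.

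For the concluding statement on minimal actions, I would observe that minimality implies full supportedness: since $G$ is amenable, $\supp(X,G)$ is a non-empty closed invariant subset of $X$, and in a minimal action $X$ is the only such subset, so $\supp(X,G)=X$. The reflexivity of $\SM(X)$, $\RMS(X)$, $\SM^\mathcal{F}(X)$ and $\RMS^\mathcal{F}(X)$ then follows from the equivalences just established.

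I do not expect a substantial obstacle, as the analytic content is entirely carried by Theorem~\ref{the:maximalSupportAndRMS_SM}. The only point deserving care is the bookkeeping of the ``for some $\mathcal{F}$'' versus ``for all $\mathcal{F}$'' quantifiers, where one must exploit the $\mathcal{F}$-independence of $\Delta_{\supp(X,G)}$ to collapse both into the single condition of full supportedness; this step is purely formal.
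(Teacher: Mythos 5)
Your proposal is correct and takes essentially the same route as the paper, which derives the corollary directly from Theorem~\ref{the:maximalSupportAndRMS_SM} via the observation that each relation intersected with $\Delta_X$ equals $\Delta_{\supp(X,G)}$ independently of $\mathcal{F}$. The handling of the quantifiers and the minimality remark (non-empty closed invariant subset forces $\supp(X,G)=X$) match the intended argument.
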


\begin{remark}
    In consideration of Proposition \ref{pro:interplayweaksitmAndregionallysitm} it is natural to ask whether removing the assumption $x\neq x'$ allows for a similar characterisation for $\WSM^\mathcal{F}(X)$. 
    That this is not the case can be observed from Example \ref{exa:literatureDock}. 
\end{remark}

Furthermore, Theorem \ref{the:maximalSupportAndRMS_SM} allows to conclude the following. 

\begin{corollary}\label{cor:outsideSuppOneToOne}
    Let $(X,G)$ be an action and $\mathcal{F}$ a F{\o}lner sequence in $G$. 
    \begin{enumerate}
        \item 
        The factor map $\pi\colon X\to Y$ onto the maximal mean equicontinuous factor is \emph{one-to-one on $X\setminus \supp(X,G)$}, i.e.\ for all $x\in X\setminus \supp(X,G)$ we have $\pi^{-1}(\pi(x))=\{x\}$. 
        \item
        The factor map onto the maximal $\mathcal{F}$-mean equicontinuous factor is one-to-one on $X\setminus \supp(X,G)$.
    \end{enumerate}
\end{corollary}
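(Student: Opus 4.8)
The plan is to exploit the containment $\RMS(X)\subseteq \SM(X)\subseteq \supp(X,G)^2$ established in Theorem \ref{the:maximalSupportAndRMS_SM}(\ref{enu:MSARMSSM}), together with the descriptions of the maximal mean equicontinuous factor as the quotient by $\icerhull{\RMS(X)}$ (Theorem \ref{the:maximalMeanEquicontinuousFactor}(\ref{enu:MMEF})). The idea is that, although $\icerhull{\RMS(X)}$ may be complicated, it can be trapped inside a very simple closed invariant equivalence relation that is already diagonal off the maximal support.

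Concretely, I would introduce the auxiliary relation $R^{*}:=\Delta_X\cup \supp(X,G)^2$ and check that it is a closed invariant equivalence relation. Reflexivity and symmetry are immediate, and transitivity follows from a short case distinction: if $(x,y),(y,z)\in R^{*}$ and neither pair lies on the diagonal, then $x,y,z\in \supp(X,G)$, so $(x,z)\in \supp(X,G)^2$. It is closed because $\supp(X,G)$ is closed, whence $\supp(X,G)^2$ is closed in $X^2$, and $\Delta_X$ is closed; it is invariant because $\supp(X,G)$ is invariant. Since $\RMS(X)\subseteq \supp(X,G)^2\subseteq R^{*}$, the minimality of the ICER hull yields $\icerhull{\RMS(X)}\subseteq R^{*}$.

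Now let $\pi\colon X\to X\big/\icerhull{\RMS(X)}$ be the factor map onto the maximal mean equicontinuous factor, so that $R(\pi)=\icerhull{\RMS(X)}$. Fix $x\in X\setminus \supp(X,G)$ and suppose $(x,x')\in R(\pi)$. Then $(x,x')\in R^{*}$, and since $x\notin \supp(X,G)$ the alternative $(x,x')\in \supp(X,G)^2$ is impossible, forcing $(x,x')\in \Delta_X$, i.e.\ $x'=x$. This gives $\pi^{-1}(\pi(x))=\{x\}$, proving part~(1). For part~(2) the identical argument applies with $\RMS^\mathcal{F}(X)$ in place of $\RMS(X)$, using $\RMS^\mathcal{F}(X)\subseteq \RMS(X)\subseteq \supp(X,G)^2$ from Proposition \ref{pro:basicPropertiesRMSandSM}(\ref{enu:BPRMSSM_inclusion}) and the description of the maximal $\mathcal{F}$-mean equicontinuous factor as $X\big/\icerhull{\RMS^\mathcal{F}(X)}$ from Theorem \ref{the:maximalMeanEquicontinuousFactor}(\ref{enu:MMEF_F}).

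The only non-routine point — and thus the sole obstacle worth flagging — is recognizing that $\Delta_X\cup \supp(X,G)^2$ is genuinely an equivalence relation (the transitivity check above) and that sandwiching the ICER hull between $\RMS^{(\mathcal{F})}(X)$ and this relation instantly delivers injectivity of the factor map off $\supp(X,G)$. Everything else is a direct appeal to the already established structure theorems.
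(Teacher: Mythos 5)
Your argument is correct and is essentially identical to the paper's own proof: both sandwich $\icerhull{\RMS^{(\mathcal{F})}(X)}$ inside the closed invariant equivalence relation $\Delta_X\cup \supp(X,G)^2$ using Theorem \ref{the:maximalSupportAndRMS_SM} and Theorem \ref{the:maximalMeanEquicontinuousFactor}, and then read off injectivity of $\pi$ off the maximal support. The explicit transitivity check for $\Delta_X\cup\supp(X,G)^2$ is a welcome detail the paper leaves implicit.
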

\begin{proof}
    Recall from Theorem \ref{the:maximalSupportAndRMS_SM} that $\RMS(X)\subseteq \supp(X,G)^2$. 
    Since $\supp(X,G)$ is closed and invariant we observe $R:=\Delta_X\cup \supp(X,G)^2$ to be a closed invariant equivalence relation with $\icerhull{\RMS(X)}\subseteq R$.
    It follows from Theorem \ref{the:maximalMeanEquicontinuousFactor} that $R(\pi)\subseteq R$. 
    For $x\in X$ and $x'\in \pi^{-1}(\pi(x))$ we observe $(x,x')\in R(\pi)\subseteq \Delta_X\cup \supp(X,G)^2$. 
    Whenever $x\notin \supp(X,G)$ we conclude that $(x,x')\in \Delta_X$, i.e.\ $x=x'$. 
    The second statement follows from a similar argument. 
\end{proof}

\begin{remark}
    Considering Corollary \ref{cor:outsideSuppOneToOne} it is natural to ask, whether in the study of the maximal mean equicontinuous factor it is sufficient to restrict only to the maximal support. 
    This is not the case. 
    Indeed, Example \ref{exa:twoPointCompactification} below allows to observe that the maximal support of an action might be equicontinuous even if the action itself is not mean equicontinuous. 
\end{remark}

\section{Examples}
\label{sec:examples}
In this section we provide examples that illustrate and contrast our results. 

\subsection{An enlightening example}
In the following, we will revisit a transitive action of the lamplighter group discussed in \cite[Section 4]{fuhrmann2025continuity} in order to show that $\WSM^\mathcal{F}$ can fail to describe the maximal $\mathcal{F}$-mean equicontinuous factor. 
We give complete details for the convenience of the reader. 

Consider two disjoint copies $\upperElement{X}=(\mathbb{Z}\cup \{\infty\})\times \{1\}$ and $\lowerElement{X}=(\mathbb{Z}\cup \{\infty\})\times \{0\}$ of the one-point compactification of $\mathbb{Z}$. 
Let $X:=\upperElement{X} \cup \lowerElement{X}$ be the disjoint union. 
We write $\upperElement{s}:=(s,1)$ and $\lowerElement{s}:=(s,0)$ for $s\in \mathbb{Z}\cup\{\infty\}$. 	    
Let $\sigma\colon X\to X$ be the homeomorphism that fixes $\upperElement{\infty}$ and $\lowerElement{\infty}$
and which maps $\sigma(\upperElement{s}):=\upperElement{(s-1)}$
and $\sigma(\lowerElement{s}):=\lowerElement{(s-1)}$ for all $s\in \mathbb{Z}$.
Before we turn to the motivating example, we consider the action of $\mathbb{Z}$ on $X$ given by $\sigma$. 

\begin{example}
\label{exa:lamplighterZ}
    $(X,\mathbb{Z})$ is mean equicontinuous.
\end{example}
\begin{proof}
    Let $d$ be a metric that induces the topology of $X$ and denote $A_n:=\{n,\dots, 2n\}$. 
    Clearly, $\mathcal{A}=(A_n)_{n\in \mathbb{N}}$ is a F{\o}lner sequence in $\mathbb{Z}$. 
    Note that $(X,\mathbb{Z})$ decomposes into the disjoint union of the conjugated subactions $(\upperElement{X},\mathbb{Z})$ and $(\lowerElement{X},\mathbb{Z})$. 
    It thus suffices to show that $(\upperElement{X},\mathbb{Z})$ is mean equicontinuous.
    Let $U$ be an open neighbourhood of $\upperElement{\infty}$. 
    Note that for $x\in \upperElement{X}$ and sufficiently large $n$ we have that $\{g.x;\, g\in A_n\}\subseteq U$. 
    From this observation it is straightforward to show that $D_{\mathcal{A}}\equiv 0$ on 
    $(\upperElement{X})^2$. 
    In particular, we have that $(\upperElement{X},\mathbb{Z})$ is $\mathcal{A}$-mean equicontinuous. 
    Since $\mathbb{Z}$ is Abelian it follows from Proposition \ref{pro:FGL_FMEvsME} that $(\upperElement{X},\mathbb{Z})$ is mean equicontinuous. 
\end{proof}

    To revisit the example from \cite[Section 4]{fuhrmann2025continuity} we additionally consider the transposition $\tau\colon X\to X$ given by $\tau:=\left(\upperElement{0} \lowerElement{0}\right)$, i.e.\ define $\tau$ as the identity on $X\setminus \left\{\lowerElement{0},\upperElement{0}\right\}$ and such that $\tau\left(\upperElement{0}\right):=\lowerElement{0}$ and $\tau\left(\lowerElement{0}\right):=\upperElement{0}$. 
    Let $G:=\langle \sigma, \tau \rangle$ be the countable discrete group generated by $\sigma$ and $\tau$ and note that $G$ acts canonically on $X$. 
	In order to describe the group $G$ we will use the following abbreviations. 
    For $b\in \mathbb{Z}$ we denote $\tau_b:=\left(\lowerElement{b}\upperElement{b}\right)$ and observe that 
	$\tau_b=\sigma^{-b}\tau\sigma^b$ and in particular $\tau=\tau_0$. 
    Note that for $b,b'\in \mathbb{Z}$ we have $\tau_b \tau_{b'}=\tau_{b'}\tau_b$. 
    This allows to abbreviate $\tau_\mathbf{b}:=\tau_{b_1} \cdots \tau_{b_k}$ for a finite subset $\mathbf{b}\subseteq \mathbb{Z}$, where we enumerate $\mathbf{b}=\{b_1,\dots, b_k\}$. 
    As discussed in \cite[Section 4]{fuhrmann2025continuity}, $G$ is the \emph{Lamplighter group} and can be described as follows. 

\begin{lemma}\cite[Lemma 4.1]{fuhrmann2025continuity}
    Each $g\in G$ has a unique representation of the form $g=\sigma^a \tau_{\mathbf{b}}$ with $a\in \mathbb{Z}$ and $\mathbf{b}\subseteq \mathbb{Z}$ finite. 
\end{lemma}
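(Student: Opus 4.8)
The plan is to exhibit $G$ as an internal semidirect product $N\rtimes\langle\sigma\rangle$, where $N:=\langle\tau_b:b\in\mathbb{Z}\rangle$ is the subgroup generated by all the transpositions $\tau_b=\left(\lowerElement{b}\upperElement{b}\right)$. Since the preceding discussion already records that the $\tau_b$ pairwise commute and that each is an involution (as a transposition, $\tau_b^2=\mathrm{id}$), the subgroup $N$ is abelian and every element of $N$ is a product of the $\tau_b$. Collecting equal factors and cancelling pairs via $\tau_b^2=\mathrm{id}$, any such product reduces to $\tau_{\mathbf{b}}$ for the finite set $\mathbf{b}\subseteq\mathbb{Z}$ of indices occurring an odd number of times. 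This shows $N=\{\tau_{\mathbf{b}}:\mathbf{b}\subseteq\mathbb{Z}\text{ finite}\}$, with the multiplication rule $\tau_{\mathbf{b}}\tau_{\mathbf{b}'}=\tau_{\mathbf{b}\Delta\mathbf{b}'}$; in particular each $\tau_{\mathbf{b}}$ is its own inverse.

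Next I would verify that $N$ is normal and deduce existence. Because $G$ is generated by $\sigma$ and $\tau=\tau_0\in N$, it suffices to check that conjugation by $\sigma^{\pm1}$ preserves $N$, and indeed the identity $\tau_b=\sigma^{-b}\tau\sigma^b$ gives $\sigma\tau_b\sigma^{-1}=\tau_{b-1}\in N$. Hence $\langle\sigma\rangle N$ is a subgroup of $G$ containing both generators, so $\langle\sigma\rangle N=G$. This yields the existence of a representation: every $g\in G$ can be written as $g=\sigma^a\tau_{\mathbf{b}}$ with $a\in\mathbb{Z}$ and $\mathbf{b}\subseteq\mathbb{Z}$ finite.

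The uniqueness is the delicate part, and the main obstacle is to rule out nontrivial coincidences; I would establish this by exploiting the faithfulness of the action on $X$. Suppose $\sigma^a\tau_{\mathbf{b}}=\sigma^{a'}\tau_{\mathbf{b}'}$. Rearranging gives $\sigma^{a-a'}=\tau_{\mathbf{b}'}\tau_{\mathbf{b}}=\tau_{\mathbf{b}\Delta\mathbf{b}'}\in N$, so it is enough to show $\langle\sigma\rangle\cap N=\{\mathrm{id}\}$ together with the injectivity of $\mathbf{b}\mapsto\tau_{\mathbf{b}}$. For the first point, each $\tau_{\mathbf{b}}$ fixes every column $\{\upperElement{s},\lowerElement{s}\}$ setwise, whereas $\sigma^k$ sends $\upperElement{s}$ to $\upperElement{s-k}$; evaluating a putative equality $\sigma^k=\tau_{\mathbf{b}}$ at a point $\upperElement{s}$ with $s\notin\mathbf{b}$ forces $\upperElement{s-k}=\upperElement{s}$, hence $k=0$. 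This gives $a=a'$ and therefore $\tau_{\mathbf{b}}=\tau_{\mathbf{b}'}$. For the injectivity, if $\mathbf{b}\Delta\mathbf{b}'\neq\emptyset$ one may choose $c\in\mathbf{b}\Delta\mathbf{b}'$ and observe that $\tau_{\mathbf{b}\Delta\mathbf{b}'}(\upperElement{c})=\lowerElement{c}\neq\upperElement{c}$, so $\tau_{\mathbf{b}\Delta\mathbf{b}'}\neq\mathrm{id}$, a contradiction. Thus $\mathbf{b}=\mathbf{b}'$, and the representation is unique. The only bookkeeping needed beyond this is the reduction of an arbitrary word in $\sigma^{\pm1},\tau^{\pm1}$ to normal form, which the normality of $N$ handles cleanly, so that the crux genuinely reduces to the elementary observation that the action on $X$ separates these group elements.
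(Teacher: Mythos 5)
Your proof is correct and complete: the decomposition of $G$ as $N\rtimes\langle\sigma\rangle$ with $N=\{\tau_{\mathbf b}:\mathbf b\subseteq\mathbb Z\text{ finite}\}$, together with the evaluation arguments (at $\upperElement{s}$ for $s\notin\mathbf b$ to get $\langle\sigma\rangle\cap N=\{\mathrm{id}\}$, and at $\upperElement{c}$ for $c\in\mathbf b\Delta\mathbf b'$ for injectivity) is exactly the standard lamplighter argument, and it is valid here since $G$ is by definition a subgroup of $\mathrm{Homeo}(X)$, so its elements are separated by their action on $X$. Note that the paper itself does not prove this lemma but imports it from \cite[Lemma~4.1]{fuhrmann2025continuity}, so there is no in-paper proof to compare against; your write-up supplies the missing details correctly.
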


\begin{example}
\label{exa:lamplighterExample}
    Consider the (transitive) action of $G:=\langle \sigma, \tau\rangle$ on $X$. 
    Furthermore, for $n\in \mathbb{N}$ denote 
    \[F_n:=\{\sigma^a \tau_{\mathbf{b}};\, 
    \{a\}\cup \mathbf{b}\subseteq \{n,\dots,2n\}\}.\]
    We have that
    \begin{enumerate}
        \item \label{enu:LE_Folner}
        $\mathcal{F}:=(F_n)_{n\in \mathbb{N}}$ is a F{\o}lner sequence in $G$. 
        \item \label{enu:LE_FmeanEquicontinuous}
        $(X,G)$ is $\mathcal{F}$-mean equicontinuous.
        \item \label{enu:LE_offDiagonal}
        $(\upperElement{\infty},\lowerElement{\infty})\in  \SM^\mathcal{F}(X)$. 
    \end{enumerate}
\end{example}

\begin{remark}
    Recall from Proposition \ref{pro:interplayweaksitmAndregionallysitm} that for non-diagonal pairs the notions of $\mathcal{F}$-weak sensitivity in the mean and $\mathcal{F}$-regionally joint mean sensitivity coincide. 
    Thus, (\ref{enu:LE_offDiagonal}) can be reformulated as $(\upperElement{\infty},\lowerElement{\infty})\in  \WSM^\mathcal{F}(X)$. 
\end{remark}

\begin{remark}
    From (\ref{enu:LE_offDiagonal}) and Proposition \ref{pro:basicPropertiesRMSandSM} we observe that 
    $\SM(X) \not \subseteq \Delta_X$ and hence Theorem \ref{the:theMMEFviaSM} implies that $(X,G)$ is not mean equicontinuous. 
    Thus $(X,G)$ is $\mathcal{F}$-mean equicontinuous, but not mean equicontinuous.     
    The question, whether for general actions of amenable groups the $\mathcal{F}$-mean equicontinuity for some F{\o}lner sequence $\mathcal{F}$ implies mean equicontinuity was one of the motivations for the development of this example in \cite{fuhrmann2025continuity}. 
\end{remark}

\begin{proof}[Proof of the claims in Example \ref{exa:lamplighterExample}:]
    For $n\in \mathbb{N}$ we denote $A_n:=\{n,\dots,2n\}$.
    Furthermore, we denote $\mathcal{A}:=(A_n)_{n\in \mathbb{N}}$. 

(\ref{enu:LE_Folner}):
    Consider $g=\sigma^c\tau_{\mathbf{d}}\in G$ and denote $\mathbf{d}':=\{c\}\cup \mathbf{d}$. 
    We claim that we have 
    \begin{equation}\label{eq:FolnerInclusion}
        F_n\setminus g^{-1}F_n
        \subseteq
        \bigcup_{d\in \mathbf{d}'}
        \{\sigma^a \tau_{\mathbf{b}};\, a\in A_n\setminus (A_n-d),\ \mathbf{b}\subseteq A_n\}.
    \end{equation}
    Before we prove this claim we show that it allows to deduce (\ref{enu:LE_Folner}). 
    For this note that the left multiplication with $g$ gives a bijection $G\to G$. 
    Thus (\ref{eq:FolnerInclusion}) yields 
    \begin{align*}
        \haar{(gF_n)\setminus F_n}
        =\haar{F_n\setminus g^{-1}F_n}
        \leq \sum_{d\in \mathbf{d}'}
        2^{\haar{A_n}}\,\haar{A_n\setminus (A_n-d)}. 
    \end{align*}
    Since $\mathcal{A}$ is a F{\o}lner sequence in $\mathbb{Z}$, 
    $\mathbf{d}'$ is finite and 
    $\haar{F_n}=2^{\haar{A_n}}\haar{A_n}$
    we have 
    \[
    \frac{\haar{(gF_n)\setminus F_n}}{\haar{F_n}}
    \leq
    \sum_{d\in \mathbf{d}'}
    \frac{\haar{A_n\setminus (A_n-d)}}{\haar{A_n}}
    =
    \sum_{d\in \mathbf{d}'}
    \frac{\haar{(d+A_n)\setminus A_n}}{\haar{A_n}}
    \overset{n}{\to} 0.
    \]
    This shows that $\mathcal{F}$ is a F{\o}lner sequence in $G$.

    It remains to show (\ref{eq:FolnerInclusion}). 
    For this, let 
    $\sigma^a\tau_{\mathbf{b}}\in F_n\setminus g^{-1}F_n$ and note that $a\in A_n$ and $\mathbf{b}\subseteq A_n$. 
    We need to show that there exists $d\in \mathbf{d}'$ such that $a\notin A_n-d$. 
    Since $c\in \mathbf{d}'$ this is clearly satisfied whenever $a\notin A_n-c$ and we thus assume w.l.o.g.\ that $c+a\in A_n$.     
    Furthermore, for $d\in \mathbf{d}$ we have 
    $
        \sigma^a\tau_{d+a}
        =\sigma^{a}\sigma^{-a}\sigma^{-d}\tau \sigma^d\sigma^a
        =\tau_d\sigma^a
    $ 
    and hence
    \[
        \sigma^{c+a} \tau_{(\mathbf{d}+a)\Delta \mathbf{b}}
        = \sigma^{c}\sigma^{a}\tau_{\mathbf{d}+a} \tau_{\mathbf{b}}
        =\sigma^c\tau_{\mathbf{d}}\sigma^a \tau_{\mathbf{b}}
        =g(\sigma^a \tau_{\mathbf{b}}) 
        \notin F_n.
    \]
    From $c+a\in A_n$ it follows that 
    $(\mathbf{d}+a)\Delta \mathbf{b}\not \subseteq A_n$ and $\mathbf{b}\subseteq A_n$ yields 
    $\mathbf{d}+a\not \subseteq A_n$. 
    We thus find $d\in \mathbf{d}\subseteq \mathbf{d}'$ with $a\notin A_n-d$. This proves the claim (\ref{eq:FolnerInclusion}).  

(\ref{enu:LE_FmeanEquicontinuous}):
    Let $d$ be any metric on $X$ that induces the topology.
    We denote $D_\mathcal{F}$ and $D_\mathcal{A}$ for the $\mathcal{F}$- and the $\mathcal{A}$-mean pseudometrics for the actions $(X,G)$ and $(X,\mathbb{Z})$, respectively.
    For $x\in \upperElement{X}$ there exist $s\in \mathbb{Z}\cup \{\infty\}$ such that 
    $x=\upperElement{s}$. 
    Since for large $n\in \mathbb{N}$ we have $s\notin A_n$ we observe that for such $n$ we have 
    $\sigma^a\tau_{\mathbf{b}}(x)=\sigma^a(x)$
    for all     
    $\sigma^a\tau_{\mathbf{b}}\in F_n$. 
    Clearly, a similar statement holds for every $x\in \lowerElement{X}$. 
    Thus for $x,x'\in X$ and large $n\in \mathbb{N}$ we have
    \begin{align*}
        F_n^*d(x,x')
    	&= \frac{1}{\haar{A_n}2^{\haar{A_n}}}\sum_{\mathbf{b}\subseteq A_n}\sum_{a\in A_n}{d}((\sigma^a \tau_{\mathbf{b}})(x),(\sigma^a \tau_{\mathbf{b}})(x'))\\
        &= \frac{1}{\haar{A_n}2^{\haar{A_n}}}\sum_{\mathbf{b}\subseteq A_n}\sum_{a\in A_n}{d}(\sigma^a(x),\sigma^a(x'))\\
        &= \frac{1}{\haar{A_n}}\sum_{a\in A_n}{d}(\sigma^a(x),\sigma^a(x'))=A_n^*d(x,x').
    \end{align*}
    Thus, we have $D_\mathcal{F}=D_\mathcal{A}$. 
    As presented in Example \ref{exa:lamplighterZ} above $(X,\mathbb{Z})$ is mean equicontinuous and hence 
    $D_\mathcal{F}=D_\mathcal{A}\in C(X^2)$. 
    This shows that $(X,G)$ is $\mathcal{F}$-mean equicontinuous. 

(\ref{enu:LE_offDiagonal}):
    For $n\in \mathbb{N}$ we consider $x_n:=\upperElement{n}$ and $x_n':=\upperElement{(n+1)}$. 
    Clearly, $(x_n,x_n')_{n\in \mathbb{N}}$ is an asymptotically diagonal sequence in $X$. 
    Furthermore, we have 
    \begin{align*}
        (F_n)_*\delta_{(x_n,x_n')}
        &=\frac{1}{\haar{A_n}2^{\haar{A_n}}}\sum_{\mathbf{b}\subseteq A_n}\sum_{a\in A_n}
        \delta_{((\sigma^a \tau_{\mathbf{b}})(\upperElement{n}),(\sigma^a \tau_{\mathbf{b}})(\upperElement{(n+1)}))}\\
        &=\frac{1}{4\haar{A_n}}\sum_{a\in A_n}
        \sum_{\mathbf{b}\subseteq \{n,n+1\}}\delta_{((\sigma^a \tau_{\mathbf{b}})(\upperElement{n}),(\sigma^a \tau_{\mathbf{b}})(\upperElement{(n+1)}))}\\
        &= \tfrac{1}{4} (A_n)_*(
        \delta_{(\upperElement{n},\upperElement{(n+1)})}
        +\delta_{(\upperElement{n},\lowerElement{(n+1)})}
        +\delta_{(\lowerElement{n},\upperElement{(n+1)})}
        +\delta_{(\lowerElement{n},\lowerElement{(n+1)})})\\
        &\to \tfrac{1}{4} (\delta_{(\upperElement{\infty},\upperElement{\infty})}+\delta_{(\upperElement{\infty},\lowerElement{\infty})}+\delta_{(\lowerElement{\infty},\upperElement{\infty})}+\delta_{(\lowerElement{\infty},\lowerElement{\infty})}). 
    \end{align*}
    It follows from Proposition \ref{pro:characterisationViaMeasures} that   $(\upperElement{\infty},\lowerElement{\infty})\in \SM^\mathcal{F}(X)$. 
\end{proof}

\subsection{Transitivity and dependence on the choice of a F{\o}lner sequence}
The following example will be an important building block in the construction of a more complex example. 

\begin{example}
\label{exa:twoPointCompactification}
    Let $X=\mathbb{Z}\cup \{\pm \infty\}$ be the two-point compactification of $\mathbb{Z}$.
    We consider the action $(X,\mathbb{Z})$ given by fixing $+\infty$ and $-\infty$, and $g.x:=g+x$ for $(g,x)\in \mathbb{Z}\times \mathbb{Z}$. 
    %We include the following illustration of $(X,\mathbb{Z})$ for the convenience of the reader. 
    Let $\mathcal{F}=(F_k)_{k\in \mathbb{N}}$ be the standard F{\o}lner sequence in $\mathbb{Z}$ given by $F_k:=\{0,\dots,k-1\}$. 
    We have 
    \[\RMS^\mathcal{F}(X)=\SM^\mathcal{F}(X)=\RMS(X)=\SM(X)=\{\pm\infty\}^2\] 
    and hence 
    \[\RME(X)=\RME^\mathcal{F}(X)=\{\pm \infty\}^2\cup \Delta_X.\]
\end{example}
\begin{proof}
    Note that the maximal support of $(X,\mathbb{Z})$ is given by 
    \[\supp(X,\mathbb{Z})=\{\pm\infty\}.\] 
    We thus observe from Theorem \ref{the:maximalSupportAndRMS_SM} that 
    $\SM(X)\subseteq \{\pm\infty\}^2$ and that 
    \[\{(\infty,\infty),(-\infty,-\infty)\}=\Delta_{\supp(X,\mathbb{Z})}\subseteq \RMS^\mathcal{F}(X).\] 
    Furthermore, from Proposition \ref{pro:basicPropertiesRMSandSM}
    and Theorem \ref{the:RMSInSMandFolnerRealisation} 
    we know that 
    $\RMS^\mathcal{F}(X)\subseteq \SM^\mathcal{F}(X)\subseteq \SM(X)$ and that 
    $\RMS^\mathcal{F}(X)\subseteq \RMS(X)\subseteq \SM(X)$. 
    Since $\RMS^\mathcal{F}(X)$ is symmetric it remains to show that 
    $(\infty,-\infty) \in \RMS^\mathcal{F}(X)$. 
    
    For this consider the asymptotically diagonal sequence $(x_n,x_n')_{n\in \mathbb{N}}$ given by $x_n:=-n$ and $x_n':=-\infty$. 
    For all $n\in \mathbb{N}$ we have 
    \begin{align*}
        (F_k)_*\delta_{(x_n,x_n')}
        =\frac{1}{k}\sum_{i=-n}^{k-1-n}\delta_{(i,-\infty)}
        \overset{k}{\to} \delta_{(\infty,-\infty)}.  
    \end{align*}
    We thus observe from Proposition \ref{pro:characterisationViaMeasures} that $(\infty,-\infty)\in \RMS^\mathcal{F}(X)$.    
\end{proof}

%%%%%%%%%%%%%%%%%%%%%%%%%%%%%%%%%%%%%%%%%%%%%%
% Tikz Preparation
%%%%%%%%%%%%%%%%%%%%%%%%%%%%%%%%%%%%%%%%%%%%%%
\newcommand{\makeCoordinate}[3]{
%#1 is m
%#2 is shift
%#3 =1 for normal, =-1 for inverted
\ifnum #1<0
\pgfmathsetmacro{\xx}{(-1 + 2^(#1)+1)/2}  % map to [0,1]
\fi
\ifnum #1>0
\pgfmathsetmacro{\xx}{(1 - 2^(-#1)+1)/2}  % map to [0,1]
\fi
\ifnum #1=0
\pgfmathsetmacro{\xx}{1/2}  % map to [0,1]
\fi
\ifnum #3=1
    \coordinate (P#1_#2) at (\xx+#2,0);
\fi
\ifnum #3=-1
    \coordinate (P#1_#2) at (1-\xx+#2,0);
\fi
}

\newcommand{\makeTwoPointCompactification}[3]{
% #1 is the resolution / #2 is shift / #3 is inversion (1 or -1)
\foreach \m in {-#1,...,#1} {
    \makeCoordinate{\m}{#2}{#3}
    \node[zpt] at (P\m_#2) {};
}
\ifnum #3=1
    \coordinate (P-inf_#2) at (#2,0);
    \coordinate (Pinf_#2) at (1+#2,0);
\fi
\ifnum #3=-1
    \coordinate (P-inf_#2) at (1+#2,0);
    \coordinate (Pinf_#2) at (#2,0);
\fi
\draw[->, gray] (P-inf_#2) .. controls +(-0.2,0.3) and +(0.2,0.3) .. (P-inf_#2);
\node[zptfat] at (P-inf_#2) {};
\draw[->, gray] (Pinf_#2) .. controls +(-0.2,0.3) and +(0.2,0.3) .. (Pinf_#2);
\node[zptfat] at (Pinf_#2) {};
}

\newcommand{\makeArrow}[2]{
\pgfmathtruncatemacro{\xx}{(#1)+1}
\draw[->, gray] (P#1_#2) to[bend left=45] (P\xx_#2);
}
%%%%%%%%%%%%%%%%%%%%%%%%%%%%%%%%%%%%%%%%%%%%%%%%%%%%%%%

The following example illustrates the possibility of the failure of transitivity of the considered relations and shows that also in the context of actions of $\mathbb{Z}$ we can have that $\RMS^\mathcal{F}(X)$ and $\SM^\mathcal{F}(X)$ depend on the choice of the F{\o}lner sequence $\mathcal{F}$. 

\begin{example}
\label{exa:twoPointCompactificationThreeCopies}
    Let $(X^{(i)},\mathbb{Z})$ for $i=1,2,3$ be three disjoint copies of the action considered in Example \ref{exa:twoPointCompactification}. 
    We obtain an action $(X,\mathbb{Z})$ as the quotient of $\bigcup_{i=1}^3 X^{(i)}$ after the identifications $\infty^{(1)}=\infty^{(3)}$ and $-\infty^{(2)}=-\infty^{(3)}$. 
    We simply denote $\infty^{(1)}$ and $-\infty^{(2)}$ for the respective identified elements.

\begin{center}
    \resizebox{1\textwidth}{!}{%
\begin{tikzpicture}[xscale=3, baseline=(current bounding box.center)]
    \tikzstyle{zpt}=[circle, fill=black, inner sep=0.5pt]
    \tikzstyle{zptfat}=[circle, fill=black, inner sep=0.8pt]
    \tikzstyle{lpt}=[below=1pt, scale=0.6]
    \tikzstyle{myArrow}=[->, gray]
    \pgfmathsetmacro{\resolution}{4}
    \pgfmathsetmacro{\numberResolution}{1}

    \makeTwoPointCompactification{\resolution}{1}{1}
    \makeTwoPointCompactification{\resolution}{2}{-1}
    \makeTwoPointCompactification{\resolution}{3}{1}
    % #2 is shift / #3 is inversion (1 or -1)

    % Arrows % Arrows 
    \foreach \m in {-2,...,1} {
    \makeArrow{\m}{1}
    \makeArrow{\m}{2}
    \makeArrow{\m}{3}
    }

    % Putting the points on top:
    \makeTwoPointCompactification{\resolution}{1}{1}
    \makeTwoPointCompactification{\resolution}{2}{-1}
    \makeTwoPointCompactification{\resolution}{3}{1}

    % Labels
    \node[lpt] at (P-1_1) {$-1^{(1)}$};
    \node[lpt] at (P-1_2) {$-1^{(3)}$};
    \node[lpt] at (P-1_3) {$-1^{(2)}$};
    \node[lpt] at (P0_1) {$0^{(1)}$};
    \node[lpt] at (P0_2) {$0^{(3)}$};
    \node[lpt] at (P0_3) {$0^{(2)}$};
    \node[lpt] at (P1_1) {$1^{(1)}$};
    \node[lpt] at (P1_2) {$1^{(3)}$};
    \node[lpt] at (P1_3) {$1^{(2)}$};

    \node[lpt] at (P-inf_1) {$-\infty^{(1)}$};
    \node[lpt] at (Pinf_1) {$\infty^{(1)}$};
    \node[lpt] at (P-inf_3) {$-\infty^{(2)}$};
    \node[lpt] at (Pinf_3) {$\infty^{(2)}$};
\end{tikzpicture}%
}
\end{center}
    As above, let $\mathcal{F}=(F_k)_{k\in \mathbb{N}}$ be the standard F{\o}lner sequence in $\mathbb{Z}$ given by $F_k:=\{0,\dots,k-1\}$.
    Furthermore, let $\mathcal{F}'$ be the F{\o}lner sequence given by $F_k':=\{-k,\dots, k\}$.
    Note that we have $\supp(X,\mathbb{Z})=\{\pm \infty^{(1)}, \pm\infty^{(2)}\}$. 
    Denoting 
    \[R:=\supp(X,\mathbb{Z})^2=\{\pm \infty^{(1)}, \pm\infty^{(2)}\}^2\]
    we have the following:
    \begin{enumerate}
        \item \label{enu:ExampleMeanEquicontinuousStructureRelation}
        $\RME(X)=\RME^\mathcal{F}(X)=\RME^{\mathcal{F}'}(X)=R\cup \Delta_X$. 
        \item \label{enu:ExampleRMS}
        $\RMS(X)=\SM(X)=\RMS^{\mathcal{F}'}(X)=\SM^{\mathcal{F}'}(X)$.
        \item \label{enu:ExampleRMS_II}
        $\RMS(X)=R\setminus \{(-\infty^{(1)},\infty^{(2)}), (\infty^{(2)},-\infty^{(1)})\}.$
        \item \label{enu:ExampleRMS_F}
        $\RMS^\mathcal{F}(X)
        =\SM^\mathcal{F}(X)
        =\RMS(X)\setminus \{(-\infty^{(1)},-\infty^{(2)}), (-\infty^{(2)},-\infty^{(1)})\}.$
    \end{enumerate}
    In particular, we observe that $\RMS(X)$, $\RMS^\mathcal{F}(X)$, $\SM(X)$ and $\SM^\mathcal{F}(X)$ are not transitive. 
    Furthermore, we observe that we have
    $\RMS^\mathcal{F}(X)\subsetneq \RMS(X)=\RMS^{\mathcal{F}'}(X)$ and $\SM^\mathcal{F}(X)\subsetneq \SM(X)=\SM^{\mathcal{F}'}(X)$.  
\end{example}
\begin{proof}    
(\ref{enu:ExampleMeanEquicontinuousStructureRelation}):
    It follows from Theorem \ref{the:maximalSupportAndRMS_SM} that $\RMS(X)\subseteq \supp(X,\mathbb{Z})^2=R$. 
    Since $R\cup \Delta_X$ is a closed invariant equivalence relation on $X$ we thus have 
    \[\RME(X)=\icerhull{\RMS(X)}\subseteq R\cup \Delta_X.\] 
    For the converse, by a similar argument as in Example \ref{exa:twoPointCompactification}, one has 
    \[\{(-\infty^{(1)},\infty^{(1)}), (\infty^{(1)},-\infty^{(2)}), (-\infty^{(2)},\infty^{(2)})\}\subseteq \RMS^\mathcal{F}(X)\subseteq \RME(X).\]
    Since $\RME(X)$ is transitive we thus have 
    \[R=\{\infty^{(1)}, -\infty^{(1)}, \infty^{(2)}, -\infty^{(2)}\}^2\subseteq\RME(X)\]
    and it follows from the reflexivity of $\RME(X)$ that 
    $R\cup \Delta_X\subseteq \RME(X).$
    This shows $\RME(X)=R\cup \Delta_X$. 
    A similar argument allows to observe 
    \[\RME^\mathcal{F}(X)=\RME^{\mathcal{F}'}(X)=R\cup \Delta_X.\]  

(\ref{enu:ExampleRMS}, \ref{enu:ExampleRMS_II} \& \ref{enu:ExampleRMS_F}):
    Recall from Proposition \ref{pro:basicPropertiesRMSandSM}
    and Theorem \ref{the:RMSInSMandFolnerRealisation} 
    that 
    $\RMS^\mathcal{F}(X)\subseteq \SM^\mathcal{F}(X)\subseteq \SM(X)$ and that 
    $\RMS^\mathcal{F}(X)\subseteq \RMS(X)\subseteq \SM(X)$.
    A similar statement also holds for $\RMS^{\mathcal{F}'}(X)$ and $\SM^{\mathcal{F}'}(X)$. 
    Furthermore, as above, we observe 
    \begin{align*}
        \{(-\infty^{(1)},\infty^{(1)}), (\infty^{(1)},-\infty^{(2)}), (-\infty^{(2)},\infty^{(2)})\}\subseteq \RMS^\mathcal{F}(X)\subseteq \SM(X)\subseteq R. 
    \end{align*}
    Thus, it suffices to show that 
    \begin{itemize}
        \item $(\infty^{(1)},\infty^{(2)})\in \RMS^\mathcal{F}(X)$,
        \item $(-\infty^{(1)},-\infty^{(2)})\in \RMS^{\mathcal{F}'}(X)$,
        \item $(-\infty^{(1)},\infty^{(2)})\notin \SM(X)$, and
        \item $(-\infty^{(1)},-\infty^{(2)})\notin \SM^\mathcal{F}(X)$. 
    \end{itemize}
    Clearly, we have $(-n^{(3)},-n^{(2)})\to (-\infty^{(2)},-\infty^{(2)})\in \Delta_X$. 
    Furthermore, we have
    \[(F_k)_*\delta_{(-n^{(3)},-n^{(2)})}
    =\frac{1}{k}\sum_{i=-n}^{k-1-n}\delta_{(i^{(3)},i^{(2)})}
    \overset{k}{\to} \delta_{(\infty^{(1)},\infty^{(2)})}\] 
    for all $n\in \mathbb{N}$. 
    Thus, Proposition \ref{pro:characterisationViaMeasures}
    yields
    $(\infty^{(1)},\infty^{(2)})\in \RMS^\mathcal{F}(X)$. 
    Similarly, 
    \[(F_k')_*\delta_{(n^{(1)},n^{(3)})}
    =\frac{1}{2k+1}\sum_{i=n-k}^{n+k}\delta_{(i^{(1)},i^{(3)})}
    \overset{k}{\to} \tfrac{1}{2}(\delta_{(\infty^{(1)},\infty^{(1)})}
    +\delta_{(-\infty^{(1)},-\infty^{(2)})})\] 
    for all $n\in \mathbb{N}$
    yields 
    $(-\infty^{(1)},-\infty^{(2)})\in \RMS^{\mathcal{F}'}(X)$. 
      
    A straightforward argument shows that $(-\infty^{(1)},\infty^{(2)})$ is not regionally proximal. We thus observe from Proposition \ref{pro:basicPropertiesRMSandSM} that 
    $(-\infty^{(1)},\infty^{(2)})\notin \SM(X)$. 
    
    To show that $(-\infty^{(1)},-\infty^{(2)})\notin \SM^\mathcal{F}(X)$ consider the open neighbourhoods
    $U^{(1)}:=\{-\infty^{(1)}\}\cup \{-m^{(1)};\, m\in \mathbb{N}\}$ and 
    \[U^{(2)}:=\{-\infty^{(2)}\}\cup \{-m^{(2)},-m^{(3)};\, m\in \mathbb{N}\}\]
    of $-\infty^{(1)}$ and $-\infty^{(2)}$, respectively. 
    Denote $U:=U^{(1)}\times U^{(2)}$. 
Assume for a contradiction that $\bigl(-\infty^{(1)},-\infty^{(2)}\bigr)\in \SM^{\mathcal F}(X)$.
Then, arguing as in the proof of Proposition~\ref{pro:infimumReformulation}, we
can find an asymptotically diagonal sequence $(n_k)_{k\in\mathbb N}$ such that
\[\inf_{k\in \mathbb{N}}\frac{\haar{G_U(x_{n_k},x_{n_k}')\cap F_{n_k}}}{\haar{F_{n_k}}}>0.\]
    For $k\in \mathbb{N}$ there exists $g\in F_{n_k}$ with 
    $(g.x_{n_k},g.x_{n_k}')\in U$. 
    It follows from $g\geq 0$ and the choice of $U$ that we then have $(x_{n_k},x_{n_k}')\in U$, contradicting the asymptotic diagonality of $(x_n,x_n')_{n\in \mathbb{N}}$. 
    This shows $(-\infty^{(1)},-\infty^{(2)})\notin \SM^\mathcal{F}(X)$. 
\end{proof}

\bibliographystyle{alpha}
\bibliography{ref}

@book {EinsiedlerWardbook2011,
    AUTHOR = {Einsiedler, Manfred and Ward, Thomas},
     TITLE = {Ergodic theory with a view towards number theory},
    SERIES = {Graduate Texts in Mathematics},
    VOLUME = {259},
 PUBLISHER = {Springer-Verlag London, Ltd., London},
      YEAR = {2011},
     PAGES = {xviii+481},
      ISBN = {978-0-85729-020-5},
   MRCLASS = {37A45 (05D10 11J70 11K50 28Dxx 37-01 37D40)},
  MRNUMBER = {2723325},
MRREVIEWER = {Vitaly Bergelson},
       DOI = {10.1007/978-0-85729-021-2},
       URL = {https://doi.org/10.1007/978-0-85729-021-2},
}

@article {Okada1979,
    AUTHOR = {Okada, Susumu},
     TITLE = {Supports of {B}orel measures},
   JOURNAL = {J. Austral. Math. Soc. Ser. A},
  FJOURNAL = {Australian Mathematical Society. Journal. Series A. Pure
              Mathematics and Statistics},
    VOLUME = {27},
      YEAR = {1979},
    NUMBER = {2},
     PAGES = {221--231},
      ISSN = {0263-6115},
   MRCLASS = {28C15},
  MRNUMBER = {531117},
MRREVIEWER = {R. B. Kirk},
}

@misc{hauser2025mean,
 author = {Hauser, Till},
 title = {Mean {Diameter}, {Regularity} and {Diam}-{Mean} {Equicontinuity}},
 year = {2025},
 howpublished = {Preprint, {arXiv}:2510.22484 [math.{DS}]},
 keywords = {37B05,37B25,37A05},
 url = {https://arxiv.org/abs/2510.22484},
 arXiv = {arXiv:2510.22484}
}

@article {QiuZhao2020,
    AUTHOR = {Qiu, Jiahao and Zhao, Jianjie},
     TITLE = {A note on mean equicontinuity},
   JOURNAL = {J. Dynam. Differential Equations},
  FJOURNAL = {Journal of Dynamics and Differential Equations},
    VOLUME = {32},
      YEAR = {2020},
    NUMBER = {1},
     PAGES = {101--116},
      ISSN = {1040-7294},
   MRCLASS = {37A25 (37B05)},
  MRNUMBER = {4061636},
       DOI = {10.1007/s10884-018-9716-5},
       URL = {https://doi.org/10.1007/s10884-018-9716-5},
}

@article{fuhrmann2025continuity,
 author = {Fuhrmann, Gabriel and Gr{\"o}ger, Maik and Hauser, Till},
 title = {On the continuity of {F{{\o}}lner} averages},
 fjournal = {Journal of Functional Analysis},
 journal = {J. Funct. Anal.},
 issn = {0022-1236},
 volume = {289},
 number = {7},
 pages = {24},
 note = {Id/No 111039},
 year = {2025},
 language = {English},
 doi = {10.1016/j.jfa.2025.111039},
 keywords = {37A30,37H15,37A15},
 zbMATH = {8053128}
}

@book{auslander1988minimal,
 author = {Auslander, Joseph},
 title = {Minimal flows and their extensions},
 fseries = {North-Holland Mathematics Studies},
 series = {North-Holland Math. Stud.},
 issn = {0304-0208},
 volume = {153},
 isbn = {0-444-70453-1},
 year = {1988},
 publisher = {Amsterdam etc.: North-Holland},
 language = {English},
 keywords = {54H20,54-02,28D15},
 zbMATH = {193253},
 Zbl = {0654.54027}
}

@article{xu2024weak,
 author = {Xu, Leiye and Zheng, Liqi},
 title = {Weak mean equicontinuity for a countable discrete amenable group action},
 fjournal = {Journal of Dynamics and Differential Equations},
 journal = {J. Dyn. Differ. Equations},
 issn = {1040-7294},
 volume = {36},
 number = {3},
 pages = {2415--2428},
 year = {2024},
 language = {English},
 doi = {10.1007/s10884-022-10201-x},
 keywords = {37A20,37A10,37B05,37B45},
 zbMATH = {7920483},
 Zbl = {1553.37024}
}

@article{ornstein1987entropy,
 author = {Ornstein, Donald S. and Weiss, Benjamin},
 title = {Entropy and isomorphism theorems for actions of amenable groups},
 fjournal = {Journal d'Analyse Math{\'e}matique},
 journal = {J. Anal. Math.},
 issn = {0021-7670},
 volume = {48},
 pages = {1--141},
 year = {1987},
 language = {English},
 doi = {10.1007/BF02790325},
 keywords = {28D15,28D20,28A05,37A15,37A05,37A20},
 zbMATH = {4038152},
 Zbl = {0637.28015}
}

@article{fuhrmann2022structure,
 author = {Fuhrmann, Gabriel and Gr{\"o}ger, Maik and Lenz, Daniel},
 title = {The structure of mean equicontinuous group actions},
 fjournal = {Israel Journal of Mathematics},
 journal = {Isr. J. Math.},
 issn = {0021-2172},
 volume = {247},
 number = {1},
 pages = {75--123},
 year = {2022},
 language = {English},
 doi = {10.1007/s11856-022-2292-8},
 keywords = {37B05,37C85,22F05,58D19},
 zbMATH = {7534001},
 Zbl = {1498.37011}
}

@misc{hauser2024mean,
 author = {Hauser, Till},
 title = {Mean equicontinuous factor maps},
 year = {2024},
 howpublished = {Preprint, {arXiv}:2411.15549 [math.{DS}]},
 keywords = {37B05},
 url = {https://arxiv.org/abs/2411.15549},
 arXiv = {arXiv:2411.15549}
}

@article{GernotKlaus2000,
 author = {Greschonig, Gernot and Schmidt, Klaus},
 title = {Ergodic decomposition of quasi-invariant probability measures},
 fjournal = {Colloquium Mathematicum},
 journal = {Colloq. Math.},
 issn = {0010-1354},
 volume = {84-85},
 pages = {495--514},
 year = {2000},
 language = {English},
 doi = {10.4064/cm-84/85-2-495-514},
 keywords = {37A15,28D05,47A35},
 url = {https://eudml.org/doc/210829},
 zbMATH = {1538160},
 Zbl = {0972.37003}
}

@book{villani2003topics,
 author = {Villani, C{\'e}dric},
 title = {Topics in optimal transportation},
 fseries = {Graduate Studies in Mathematics},
 series = {Grad. Stud. Math.},
 issn = {1065-7339},
 volume = {58},
 isbn = {0-8218-3312-X},
 year = {2003},
 publisher = {Providence, RI: American Mathematical Society (AMS)},
 language = {English},
 keywords = {90-02,28D05,35B65,35J60,49N90,49Q20,90B20},
 zbMATH = {1909499},
 Zbl = {1106.90001}
}

@article{fomin1951dynamical,
  TITLE = {On dynamical systems with a purely point spectrum},
  AUTHOR = {Fomin, Sergei},
  journal = {Dokl. Akad. Nauk SSSR},
  VOLUME = {77},
  LANGUAGE = {Russian},
  NUMBER = {1},
  PAGES = {22--32},
  YEAR = {1951}
}

@article{li2015mean,
 author = {Li, Jian and Tu, Siming and Ye, Xiangdong},
 title = {Mean equicontinuity and mean sensitivity},
 fjournal = {Ergodic Theory and Dynamical Systems},
 journal = {Ergodic Theory Dyn. Syst.},
 issn = {0143-3857},
 volume = {35},
 number = {8},
 pages = {2587--2612},
 year = {2015},
 language = {English},
 doi = {10.1017/etds.2014.41},
 keywords = {37B05,28D20,37B20,37C40},
 zbMATH = {6540101},
 Zbl = {1356.37016}
}

@article{cortez2014invariant,
 author = {Cortez, Mar{\'{\i}}a Isabel and Petite, Samuel},
 title = {Invariant measures and orbit equivalence for generalized {Toeplitz} subshifts},
 fjournal = {Groups, Geometry, and Dynamics},
 journal = {Groups Geom. Dyn.},
 issn = {1661-7207},
 volume = {8},
 number = {4},
 pages = {1007--1045},
 year = {2014},
 language = {English},
 doi = {10.4171/GGD/255},
 keywords = {37B10,37B05,28D05,37A05},
 zbMATH = {6417654},
 Zbl = {1347.37029}
}

@incollection{downarowicz2005survey,
 author = {Downarowicz, Tomasz},
 title = {Survey of odometers and {Toeplitz} flows},
 booktitle = {Algebraic and topological dynamics. Proceedings of the conference, Bonn, Germany, May 1--July 31, 2004},
 isbn = {0-8218-3751-6},
 pages = {7--37},
 year = {2005},
 publisher = {Providence, RI: American Mathematical Society (AMS)},
 language = {English},
 keywords = {37B05,37B10,37B20,37A35},
 zbMATH = {2236657},
 Zbl = {1096.37002}
}

@article{li2021mean,
 author = {Li, Jie and Yu, Tao},
 title = {On mean sensitive tuples},
 fjournal = {Journal of Differential Equations},
 journal = {J. Differ. Equations},
 issn = {0022-0396},
 volume = {297},
 pages = {175--200},
 year = {2021},
 language = {English},
 doi = {10.1016/j.jde.2021.06.032},
 keywords = {37B10,37B05,37A25,37B40,28D20},
 zbMATH = {7373373},
 Zbl = {1476.37023}
}

@book{tempelman1992ergodic,
 author = {Tempel'man, Arkady},
 title = {Ergodic theorems for group actions. {Informational} and thermodynamical aspects. {Transl}. from the {Russian}},
 fseries = {Mathematics and its Applications (Dordrecht)},
 series = {Math. Appl., Dordr.},
 issn = {0921-3791},
 volume = {78},
 isbn = {0-7923-1717-3},
 year = {1992},
 publisher = {Dordrecht etc.: Kluwer Academic Publishers},
 language = {English},
 keywords = {28D15,28-02,47A35,60F15},
 zbMATH = {53788},
 Zbl = {0753.28014}
}

@article{li2014proximality,
 author = {Li, Jian and Tu, Siming},
 title = {On proximality with {Banach} density one},
 fjournal = {Journal of Mathematical Analysis and Applications},
 journal = {J. Math. Anal. Appl.},
 issn = {0022-247X},
 volume = {416},
 number = {1},
 pages = {36--51},
 year = {2014},
 language = {English},
 doi = {10.1016/j.jmaa.2014.02.021},
 keywords = {37B99,54H20},
 zbMATH = {6335744},
 Zbl = {1317.37026}
}

@article{downarowicz2020when,
 author = {Downarowicz, Tomasz and Weiss, Benjamin},
 title = {When all points are generic for ergodic measures},
 fjournal = {Bulletin of the Polish Academy of Sciences, Mathematics},
 journal = {Bull. Pol. Acad. Sci., Math.},
 issn = {0239-7269},
 volume = {68},
 number = {2},
 pages = {117--132},
 year = {2020},
 language = {English},
 doi = {10.4064/ba210113-15-1},
 keywords = {37A30,37A05},
 zbMATH = {7376285},
 Zbl = {1476.37011}
}

\end{document}